\numberwithin{equation}{section}
\theoremstyle{plain}
	\newtheorem{theorem}{Theorem}[section]
	\newtheorem{lemma}[theorem]{Lemma}
	\newtheorem{proposition}[theorem]{Proposition}
	\newtheorem{corollary}[theorem]{Corollary}
\theoremstyle{definition}
	\newtheorem{definition}[theorem]{Definition}
	\newtheorem{example}[theorem]{Example}
	\newtheorem{remark}[theorem]{Remark}
	\newtheorem{open.problem}[theorem]{Open Problem}
\newcommand{\Leb}[1]{\mathscr{L}^{#1}} 
\newcommand{\emb}{\hookrightarrow}
\newcommand{\N}{\mathbb{N}}
\newcommand{\R}{\mathbb{R}}
\newcommand{\M}{\mathcal{M}}
\newcommand{\eps}{\varepsilon}
\newcommand{\weakto}{\rightharpoonup}
\newcommand{\res}{\mathbin{\vrule height 1.6ex depth 0pt width
0.13ex\vrule height 0.13ex depth 0pt width 1.3ex}}
\title[Riemann-Liouville fractional Sobolev spaces]{A note on Riemann-Liouville fractional Sobolev spaces}
\author{Alessandro Carbotti$^1$ and Giovanni E. Comi$^2$}
\email{$^1$alessandro.carbotti@unisalento.it, $^2$giovanni.comi@uni-hamburg.de}
\thanks{$^1$Dipartimento di Matematica e Fisica, Universit\`a del Salento, Via Per Arnesano, 73100 Lecce, Italy.
$^2$Fachbereich Mathematik, Universit\"at Hamburg, Bundesstra{\normalfont\ss}e 55, 20146 Hamburg, Germany.}
\date{\today}  \linespread{1.2}
\keywords{Fractional calculus, fractional derivative, Riemann-Liouville fractional integral, Riemann-Liouville fractional derivative, Caputo fractional derivative, fractional Sobolev spaces, fractional $BV$ spaces}
\subjclass[2010]{26A33, 26A45, 26B30, 47B38}
\begin{document}

\begin{abstract}
Taking inspiration from a recent paper by Bergounioux et al., we study the Riemann-Liouville fractional Sobolev space $W^{s, p}_{RL, a+}(I)$, for $I = (a, b)$ for some $a, b \in \R, a < b$, $s \in (0, 1)$ and $p \in [1, \infty]$; that is, the space of functions $u \in L^{p}(I)$ such that the left Riemann-Liouville $(1 - s)$-fractional integral $I_{a+}^{1 - s}[u]$ belongs to $W^{1, p}(I)$. We prove that the space of functions of bounded variation $BV(I)$ and the fractional Sobolev space $W^{s, 1}(I)$ continuously embed into $W^{s, 1}_{RL, a+}(I)$. In addition, we define the space of functions with left Riemann-Liouville $s$-fractional bounded variation, $BV^{s}_{RL,a+}(I)$, as the set of functions $u \in L^{1}(I)$ such that $I^{1 - s}_{a+}[u] \in BV(I)$, and we analyze some fine properties of these functions. Finally, we prove some fractional Sobolev-type embedding results and we analyze the case of higher order Riemann-Liouville fractional derivatives.
\end{abstract}

\maketitle

\tableofcontents

\section{Introduction}

The goal of this paper is to analyze in detail the connection between some functional spaces defined through the Riemann-Liouville fractional operator and the classical Sobolev and $BV$ spaces on an interval $I = (a, b)$ of the real line.

The intuitive idea of defining a fractional version of the derivative and integral operators is as old as calculus itself, having being mentioned for the first time in an epistular exchange between Leibniz and de l'H${\rm \hat{o}}$pital which dates back to 1695 \cite{leibniz1849letter}. Fractional integrals and derivatives have proved to be useful in applications, since they arise naturally in many contexts such as viscoelasticity, neurobiology and finance, see for instance \cite{MR3488533, MR3557159, MR2379269, MR3089369, DV1}. Therefore, different examples of such operators are present in literature. Among these ones, Riemann-Liouville and Caputo fractional derivatives are the most exploited in the one-dimensional applications. 
Given a sufficiently smooth function $u$ on an interval $(a, b)$ and $s \in (0, 1)$, the left and right Riemann-Liouville $s$-fractional derivatives of $u$ are defined as
\begin{align*}
D^s_{a+}\left[u\right]\left(x\right) & :=\frac{d}{dx}\frac{1}{\Gamma(1-s)}\int_a^x \frac{u(t)}{(x-t)^s}dt, \\
D^s_{b-}\left[u\right]\left(x\right) & :=-\frac{d}{dx}\frac{1}{\Gamma(1-s)}\int_x^b \frac{u(t)}{(t-x)^s}dt,
\end{align*}
respectively, where $\Gamma$ is Euler's Gamma function. On the other hand, the left and right Caputo $s$-fractional derivatives of $u$ are set to be
\begin{align*}
	{}^C D^s_{a+}[u](x) &:= \frac{1}{\Gamma(1-s)}\int_a^x \frac{u'(t)}{(x-t)^s} \, dt, \\
	{}^C D^s_{b-}[u](x) & :=-\frac{1}{\Gamma(1-s)}\int_x^b \frac{u'(t)}{(t-x)^s} \, dt.
\end{align*}
It is easy to notice that Caputo $s$-fractional derivatives are just given by a commutation in the order of the operations of the left and right $(1 - s)$-fractional integrals,
\begin{equation*}
I^{1-s}_{a+}[u](x):=\frac{1}{\Gamma(1-s)}\int_a^x \frac{u(t)}{(x-t)^s} \, dt \ \text{ and } \ I^{1-s}_{b-}[u](x):= \frac{1}{\Gamma(1-s)}\int_x^b \frac{u(t)}{(t-x)^s} \,dt,
\end{equation*}
and the classical differentiation which define the Riemann-Liouville $s$-fractional derivatives. Indeed, it is possible to show that the difference between these two notion of fractional derivatives depends only on the values of $u$ on the endpoints $a, b$: more precisely, for any $u \in C^{1}([a, b])$ we have
\begin{align*}
D^s_{a+}[u](x) & ={}^C D^s_{a+}u(x)+\frac{u(a)}{\Gamma(1-s)}(x-a)^{-s}, \\
D^s_{b-}[u](x)& ={}^C D^s_{b-}u(x)+\frac{u(b)}{\Gamma(1-s)}(b-x)^{-s}.
\end{align*}
These relations can be used to derive, at least formally, an interesting relation between these notions of derivatives and the fractional Laplacian on the whole $\R$ (as it was done in \cite{CDV19}). Indeed, by sending $a \to - \infty$ and $b \to + \infty$, we see that the Riemann-Liouville and Caputo fractional derivatives coincide for functions in $C^{\infty}_{c}(\R)$ (or even $C^{1}_{c}(\R)$). Therefore, we can define the ``improper'' left and right Riemann-Liouville fractional derivatives of $u$ with fixed base points $\pm \infty$ as
	\begin{align*}
	D^{s}_{-\infty}[u](x) & := \frac{d}{dx} I^{1-s}_{-\infty}[u](x) = \frac{1}{\Gamma(1-s)} \int_{-\infty}^{x} \frac{u'(t)}{(x-t)^s} \, dt, \\
	D^{s}_{+\infty}[u](x) & := -\frac{d}{dx} I^{1 - s}_{+\infty}[u](x) = -\frac{1}{\Gamma(1-s)}\int_{x}^{+ \infty} \frac{u'(t)}{(t-x)^s} \, dt.
	\end{align*}
Then, using the equivalent Marchaud formulation (see for instance \cite[Section 13]{MR1347689}), we can prove that
	\begin{eqnarray*}
		D^s_{-\infty}[u](x)+D^s_{+\infty}[u](x)&=&
		\frac{s}{\Gamma(1-s)}\int_0^{+\infty}\frac{2u(x)-u(x+y)-u(x-y)}{y^{s+1}}dy \\
		&=&\frac{s}{2\Gamma(1-s)}\,\int_{-\infty}^{+\infty}\frac{2u(x)-u(x+y)-u(x-y)}{|y|^{s+1}}dy \\
		&=&\frac{s}{2 c_s \Gamma(1-s)}\left(-\Delta\right)^{\frac{s}{2}}u(x),
	\end{eqnarray*}
where $\left(-\Delta\right)^{\frac{s}{2}}$ denotes the fractional Laplacian of order $\frac{s}{2}$ and $$c_s:=\left(\int_{-\infty}^{+\infty}\frac{1-\cos(\omega)}{|\omega|^{s+1}}d\omega\right)^{-1}.$$
In addition, as it has been pointed out also in \cite[Section 1]{SS18}, it is easy to see that
\begin{equation*}
D^{s}_{-\infty}[u](x) - D^{s}_{+\infty}[u](x) = \frac{1}{\Gamma(1-s)} \int_{-\infty}^{+ \infty} \frac{u'(t)}{|x-t|^s} \, dt = \frac{1}{\Gamma(1-s) \nu_{1 - s}} I^{1 - s}[u'](x),
\end{equation*}
where 
\begin{equation*}
I^{\sigma}[v](x) := \nu_{\sigma} \int_{- \infty}^{+ \infty} \frac{v(t)}{|x - t|^{1 - \sigma}} \, dt, 
\end{equation*}
is the Riesz potential of order $\sigma \in (0, 1)$ of a function $v \in C^{\infty}_{c}(\R)$, and 
$$\nu_{\sigma} = \frac{\Gamma \left ( \frac{1 - \sigma}{2} \right )}{2^{\sigma} \sqrt{\pi} \Gamma \left ( \frac{\sigma}{2} \right )}.$$ 
This suggests that we may define a fractional derivative operator on the whole $\R$ for $u \in C^{\infty}_{c}(\R)$ by setting
\begin{equation*}
	\nabla^s u(x) := \mu_{s} \int_{- \infty}^{+\infty} \frac{u'(t)}{|x - t|^{s}} \, dt = \frac{\mu_{s}}{\nu_{1- s}} I^{1 - s}[u'](x), 
\end{equation*}
for some multiplicative constant $\mu_{s} > 0$, so that
	\begin{equation*}
	D^{s}_{-\infty}[u](x) - D^{s}_{+\infty}[u](x) = \frac{1}{\Gamma(1-s) \mu_{s}} \nabla^s u(x).
	\end{equation*}
This is indeed the one dimensional case of the new notion of fractional gradient on the whole $\R$, suitable also for a meaningful extension to $\R^{n}$, for all $n \ge 1$, which has been investigated in some recent papers, as \cites{CS18, CS19, SSS15, SS15, SS18, S19}, with the aim of generalizing classical vector calculus rules to the fractional setting, and thus providing a way to define weakly fractionally differentiable functions.

In the case of a bounded open interval $I = (a, b)$, the left (and right) Riemann-Liouville fractional derivatives for regular functions have been widely studied in the literature (we refer the interested reader to the monograph \cite{MR1347689} and the bibliography therein), while in recent years it has been considered the case of $L^{p}$-functions which admit left (and right) Riemann-Liouville fractional derivatives in a weak sense, thus defining the left (and right) Riemann-Liouville fractional Sobolev spaces $W^{s, p}_{RL, a+}(I)$ (and $W^{s, p}_{RL,b-}(I)$) \cites{BLNT, MR3311433, MR3144452}.
In this paper, we answer some questions posed in \cite{BLNT}; namely, we extend \cite[Theorem 4.1]{BLNT} from $SBV$ to $BV$ by proving that $BV(I)$, continuously embed into $W^{s,1}_{RL,a+}(I)$ (Theorem \ref{result:BV_W_s1_RL}). We actually show also that the embedding $W^{s, 1}(I) \hookrightarrow W^{s,1}_{RL,a+}(I)$ is continuous (Proposition \ref{prop:propopop}). The continuity of these embeddings can be useful in many variational models involving this kind of fractional operators. In addition, we advance the study of the spaces $W^{s, p}_{RL, a+}(I)$ by proving Sobolev-type embedding theorems (Theorem \ref{result:RLfracsobemb}) and by considering also the case $s > 1$. Furthermore, we introduce the space $BV^s_{RL,a+}(I)$ of the functions with left Riemann-Liouville $s$-fractional bounded variation; that is, functions in $L^1(I)$ with $(1-s)$-fractional integral in $BV(I)$. Then, we study some of its properties. For instance, we show that a function $u \in BV^{s}_{RL, a+}(I)$ belongs to $W^{s, 1}_{RL, a+}(I)$ if and only if its distributional left Riemann-Liouville fractional derivative $\mathcal{D}^{s}_{a+}[u]$ is absolutely continuous with respect to the one-dimensional Lebesgue measure $\Leb{1}$ (Proposition \ref{prop:BV_s_W_s_a}). In addition, we extend the case $p = 1$ of the fractional Sobolev-type embedding by proving that $BV^{s}_{RL, a+}(I) \emb L^{\frac{1}{1 - s}, \infty}(I)$ (Theorem \ref{thm:Sob_emb_BV}).

Since both left and right Riemann-Liouville fractional Sobolev spaces behave exactly in the same way for the results that we are interested into, we shall consider only the left ones. We specify further in the Remarks \ref{rem:reflection} and \ref{rem:leftneqright} how the two spaces are related and, with a simple counterexample, we show that they do not coincide.

The paper is structured in the following way. In Section \ref{sec:prel}, after having set some notation and recalled definitions and preliminary notions, we prove some representation formulas for Riemann-Liouville fractional Sobolev functions and duality relations involving the Caputo fractional derivative. Section \ref{sec:main} is devoted to the aforementioned embedding results, and, in addition, to the analysis of the asymptotics as $s\to 1^-$ of the Riemann-Liouville fractional derivative $D^s_{a+}[u]$ for a function $u \in BV(I)$. We also provide an extension result for the Riemann-Liouville fractional integral to the space of finite Radon measures on an open bounded interval $I$, and a counterexample to the embedding results in the case in which the interval is instead unbounded. 
In Section \ref{sec:BV_s} we define the space $BV^s_{RL,a+}(I)$ and we prove that it strictly contains $W^{s,1}_{RL,a+}(I)$ and hence, thanks to Theorem \ref{result:BV_W_s1_RL}, also $BV(I)$. Moreover, we show through an example that, despite the regularization properties of the fractional integral, the fractional derivative measure $\mathcal{D}^s_{a+}[u]$ of a function $u \in BV^s_{RL,a+}(I)$ does not enjoy any particular absolute continuity property in general, since it may involve Dirac delta measures.
In Section \ref{sec:Sob_act}, we study the continuity of the $(1-s)$-fractional integral in the Sobolev space $W^{1,p}(I)$ for $1\le p \le\infty$. As a corollary, we obtain the well known result on the inclusion relations between Riemann-Liouville fractional Sobolev spaces. In the case $p=\infty$, we show through a simple example that, if the function does not vanish in the initial point, its Riemann-Liouville fractional derivative cannot be essentially bounded, even if the function is locally analytic. We conclude the section with some results on the improved differentiability properties of $I^{s}_{a+}[u]$ for Sobolev functions $u \in W^{1, p}(I)$. Then, in Section \ref{sec:Sob_type_emb} we prove some fractional Sobolev-type embedding theorems for $W^{s, p}_{RL, a+}(I)$ and $BV^{s}_{RL, a+}(I)$.
In Section \ref{sec:higher} we extend some results obtained in the rest of the paper by taking into account higher order fractional derivatives; namely, we prove the continuity of the fractional integral between Sobolev spaces of greater integer order and the inclusion of the space of functions with bounded Hessian in a higher order Riemann-Liouville fractional Sobolev space. We conclude the work with some open questions in Section \ref{sec:open}.

\section{Notation and preliminaries} \label{sec:prel}

Through this paper we shall work on bounded open intervals $I = (a, b)$ in $\R$, for some $a, b \in \R$, $a < b$. Following the usual notation, the map $\Gamma : (0, \infty) \to (0, \infty)$ is {\em Euler's Gamma function}, see~\cite{A64}. As it is customary, we denote by $\M(V)$ the space of Radon measures on some Borel set $V \subset \R$, and we will consider mainly $\M(I)$ and $\M(\overline{I})$, where $\overline{I} = [a, b]$. We shall say that $\rho \in C^{\infty}_{c}((-1, 1))$ is a {\em standard mollifier} if $\rho \ge 0$, $\rho(x) = \rho(-x)$ and $\displaystyle \int_{-1}^{1} \rho \, dx = 1$. In addition, for all $\eps > 0$, we set $\displaystyle \rho_{\eps}(x) := \frac{1}{\eps} \rho \left ( \frac{x}{\eps} \right )$. For $k\in\N$, $h\in\N_0$, $p \in [1, \infty]$ and $\beta \in(0,1)$ we define the following spaces 
\begin{align*}
W^{k,p}_{a}(I) & := \left \{ u \in W^{k, p}(I) : u^{(j)}(a) = 0 \text{ for all } j \in \{0, \dots, k - 1\}\right \}, \\
C^{h,\beta}_a(\overline{I}) & := \left \{ u \in C^{h, \beta}(\overline{I}) : u^{(j)}(a) = 0 \text{ for all } j \in \{0, \dots, h \} \right \},
\end{align*} 
where 
\begin{equation*}
C^{h,\beta}(\overline{I}) := \{ u \in C^{h}(\overline{I}) : u^{(h)} \in C^{0, \beta}(\overline{I})\}.
\end{equation*}
We employ analogous definitions when the left endpoint $a$ is replaced with the right endpoint $b$.

For the convenience of the reader we recall in this section the definitions and some properties of a few well known functional spaces.

\begin{definition}
	Let $1\le p<\infty$. We say that a measurable function $u$ belongs to the {\em weak $L^{p}$-space} $L^{p,\infty}(I)$ if 
\begin{equation*}
\sup_{t>0} t^{p} \Leb{1}(\{x\in I\, :\ |u(x)|>t\})<\infty.
\end{equation*}
The function $$(0,+\infty)\ni t \to \Leb{1}(\{x\in I\, :\ |u(x)|>t\})$$ is called the {\em distribution function} of $u$. The space $L^{p, \infty}(I)$ is equipped with the quasi-norm
\begin{equation*}
\|u\|_{L^{p, \infty}(I)} := \sup_{t>0} t \, \Leb{1}(\{x\in I\, :\ |u(x)|>t\})^{\frac{1}{p}}.
\end{equation*}
\end{definition}

We recall a well known result on the embeddings of the weak $L^{p}$ spaces on sets with finite measure (see \cite[Exercise 1.1.11]{G14-C}).

\begin{lemma} \label{lem:embed_L_p_weak}
For all $1 \le r < p$ we have the continuous embeddings 
\begin{equation*}
L^{p}(I) \emb L^{p, \infty}(I) \emb L^{r}(I),
\end{equation*}
with the estimates
\begin{equation*}
\|u\|_{L^{p, \infty}(I)} \le \|u\|_{L^{p}(I)} \text{ for all } u \in L^{p}(I),
\end{equation*} 
and 
\begin{equation*} 
\|u\|_{L^{r}(I)} \le \left ( \frac{p}{p - r} \right )^{\frac{1}{r}} (b - a)^{\frac{1}{r} - \frac{1}{p}} \|u\|_{L^{p, \infty}(I)} \text{ for all } u \in L^{p, \infty}(I).
\end{equation*}
\end{lemma}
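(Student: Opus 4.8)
The plan is to prove the two embeddings separately: the inclusion $L^{p}(I) \emb L^{p,\infty}(I)$ is an immediate consequence of Chebyshev's inequality, while $L^{p,\infty}(I) \emb L^{r}(I)$ follows from the layer-cake representation of the $L^{r}$-norm together with an optimization in a truncation parameter. Since all the maps involved are linear, the stated norm estimates automatically give continuity.

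First I would treat $L^{p}(I) \emb L^{p,\infty}(I)$. Fix $u \in L^{p}(I)$ and $t > 0$, and set $E_{t} := \{x \in I : |u(x)| > t\}$. Then $t^{p}\,\Leb{1}(E_{t}) \le \int_{E_{t}} |u|^{p}\,dx \le \|u\|_{L^{p}(I)}^{p}$; taking the supremum over $t > 0$ and then the $p$-th root yields $\|u\|_{L^{p,\infty}(I)} \le \|u\|_{L^{p}(I)}$, and in particular $u \in L^{p,\infty}(I)$.

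For the second embedding, fix $u \in L^{p,\infty}(I)$ and abbreviate $M := \|u\|_{L^{p,\infty}(I)}$ and $A := b - a = \Leb{1}(I)$. By Tonelli's theorem applied to the characteristic function of $\{(x,t) : |u(x)| > t > 0\}$, for any $r \ge 1$ one has
\begin{equation*}
\int_{I} |u|^{r}\,dx = r\int_{0}^{\infty} t^{r-1}\,\Leb{1}(\{x \in I : |u(x)| > t\})\,dt.
\end{equation*}
For a level $\lambda > 0$ to be chosen, I would split this integral at $\lambda$: on $(0,\lambda)$ I bound the distribution function crudely by $\Leb{1}(I) = A$, giving a contribution $A\lambda^{r}$; on $(\lambda,\infty)$ I use the defining inequality $\Leb{1}(\{|u| > t\}) \le M^{p} t^{-p}$, and since $r < p$ the integral $r M^{p}\int_{\lambda}^{\infty} t^{r-1-p}\,dt = \tfrac{r}{p-r} M^{p}\lambda^{r-p}$ converges. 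Hence
\begin{equation*}
\int_{I} |u|^{r}\,dx \le A\lambda^{r} + \frac{r}{p-r}\,M^{p}\lambda^{r-p}.
\end{equation*}
The right-hand side, as a function of $\lambda$ on $(0,\infty)$, is minimized at $\lambda = M A^{-1/p}$ (by differentiation), for which both terms become $M^{r} A^{1 - r/p}$ up to the factors $1$ and $\tfrac{r}{p-r}$, so their sum equals $\tfrac{p}{p-r}\,M^{r} A^{1 - r/p}$. Extracting the $r$-th root gives exactly $\|u\|_{L^{r}(I)} \le \bigl(\tfrac{p}{p-r}\bigr)^{1/r}(b-a)^{1/r - 1/p} M$, and in particular $u \in L^{r}(I)$.

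There is no genuine obstacle: the only points needing a little care are the invocation of the layer-cake identity (Tonelli) and the verification that $\lambda = M A^{-1/p}$ minimizes the elementary function above; with any fixed admissible choice of $\lambda$ one already obtains the inclusion with a non-sharp constant, so the optimization is needed only to match the constant stated in the lemma.
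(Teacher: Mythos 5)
Your proof is correct: Chebyshev's inequality gives the first estimate, and the layer-cake formula with the splitting level $\lambda = \|u\|_{L^{p,\infty}(I)}(b-a)^{-1/p}$ reproduces the stated constant $\bigl(\tfrac{p}{p-r}\bigr)^{1/r}(b-a)^{1/r-1/p}$ exactly. The paper gives no proof of this lemma, only a citation to \cite[Exercise 1.1.11]{G14-C}, and your argument is precisely the standard one that reference intends, so there is nothing to compare beyond noting that you have supplied the details the authors omitted.
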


\subsection{$BV$ functions on the real line} 

\begin{definition}
Let $U$ be an open set in $\R$. We say that $u\in BV(U)$ if $u\in L^1(U)$ and its distributional derivative $Du$ is a finite Radon measure on $U$; that is, if there exists $\mu \in \M(U)$ such that 
\begin{equation*}
	\int_{U} u(x)\phi'(x)dx=-\int_{U} \phi(x)d\mu(x),
\end{equation*}
for all $\phi\in C_c^1(U)$, in which case we have $\mu=Du$.
\end{definition}

The space $BV(U)$ is a Banach space when equipped with the norm $$\left\|u\right\|_{BV(U)}:=\left\|u\right\|_{L^1(U)}+|Du|(U).$$ In addition, $BV$ functions on the real line are essentially bounded: we recall the statement in the case in which $U$ is a segment.

\begin{lemma} \label{result:BV_bounded}
We have $BV(I) \emb L^{\infty}(I)$ with a continuous embedding. In particular,
\begin{equation} \label{eq:BV_bounded}
\|u\|_{L^{\infty}(I)} \le \max\left\{1,\frac{1}{b-a}\right\}\|u\|_{BV(I)},
\end{equation}
for all $u \in BV(I)$.
\end{lemma}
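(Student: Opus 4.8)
The plan is to establish the pointwise bound in \eqref{eq:BV_bounded} by combining the elementary oscillation estimate for one-dimensional $BV$ functions with an averaging argument; the continuity of the embedding $BV(I) \emb L^{\infty}(I)$ is then immediate. The key underlying fact is that the oscillation of (a good representative of) a $BV$ function is controlled by its total variation. Rather than working directly with the precise representative, I would obtain what is needed by mollification: fix a standard mollifier $\rho$, and for small $\eps > 0$ set $u_{\eps} := u * \rho_{\eps}$ on $I_{\eps} := (a + \eps, b - \eps)$. Using the definition of $Du$ one checks that $u_{\eps}' = \rho_{\eps} * Du$ on $I_{\eps}$, so that $\|u_{\eps}'\|_{L^{1}(I_{\eps})} \le |Du|(I)$ by Fubini's theorem, and hence
\[
|u_{\eps}(x) - u_{\eps}(y)| = \left| \int_{y}^{x} u_{\eps}'(t)\,dt \right| \le |Du|(I) \quad \text{for all } x, y \in I_{\eps}.
\]
One also has $\|u_{\eps}\|_{L^{1}(I_{\eps})} \le \|u\|_{L^{1}(I)}$, because translating $I_{\eps}$ by any amount of modulus at most $\eps$ keeps it inside $I$.

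Next, for fixed $x \in I_{\eps}$ I would average the identity $u_{\eps}(x) = u_{\eps}(y) + (u_{\eps}(x) - u_{\eps}(y))$ over $y \in I_{\eps}$ and use the triangle inequality together with the two estimates above:
\[
|u_{\eps}(x)| \le \frac{1}{\Leb{1}(I_{\eps})} \int_{I_{\eps}} |u_{\eps}(y)|\,dy + \frac{1}{\Leb{1}(I_{\eps})} \int_{I_{\eps}} |u_{\eps}(x) - u_{\eps}(y)|\,dy \le \frac{\|u\|_{L^{1}(I)}}{b - a - 2\eps} + |Du|(I).
\]
Since $u_{\eps} \to u$ in $L^{1}_{\mathrm{loc}}(I)$, hence pointwise $\Leb{1}$-a.e.\ along a subsequence $\eps \to 0^{+}$, passing to the limit yields $|u(x)| \le \frac{1}{b-a}\|u\|_{L^{1}(I)} + |Du|(I)$ for $\Leb{1}$-a.e.\ $x \in I$.

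To finish, I would absorb the two coefficients into $\max\{1, \frac{1}{b-a}\}$: when $b - a \ge 1$ both coefficients are $\le 1$, and when $b - a < 1$ both are $\le \frac{1}{b-a}$; in either case $|u(x)| \le \max\{1, \frac{1}{b-a}\}\,(\|u\|_{L^{1}(I)} + |Du|(I)) = \max\{1, \frac{1}{b-a}\}\|u\|_{BV(I)}$ for a.e.\ $x$, which is exactly \eqref{eq:BV_bounded}, and taking the essential supremum gives the claimed continuous embedding. The only step requiring some care is the oscillation bound, i.e.\ the passage from the distributional definition of $Du$ to a pointwise fundamental-theorem-of-calculus statement, and this is precisely what the mollification detour resolves at the cost of a harmless $\eps$-limit; since the whole statement is classical, one could alternatively simply invoke a standard reference on $BV$ functions.
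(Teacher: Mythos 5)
Your proof is correct and follows essentially the same route as the paper: the paper simply cites \cite[Proof of Lemma 5.21, Claim 3]{Evans_Gariepy} for the pointwise estimate $|u(z)| \le \frac{1}{b-a}\int_a^b |u|\,dx + |Du|(I)$ and then absorbs the coefficients into $\max\{1,\tfrac{1}{b-a}\}$ exactly as you do. The only difference is that you supply a self-contained mollification proof of that cited inequality (which is sound), rather than invoking the reference.
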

\begin{proof}
Thanks to \cite[Proof of Lemma 5.21, Claim 3]{Evans_Gariepy}, we know that, for all $u \in BV(I)$ and $\Leb{1}$-a.e. $z \in I$,
\begin{equation*}
|u(z)| \le \frac{1}{b - a} \int_{a}^{b} |u(x)| \, dx + |Du|(I).
\end{equation*}
Hence, \eqref{eq:BV_bounded} follows immediately.
\end{proof}

As a consequence, it is not difficult to show that, if $u \in BV(I)$ and we set 
\begin{equation} \label{eq:extension_zero_BV}
\tilde{u}(x) = \begin{cases} u(x) & \text{if} \ x \in I, \\
0 & \text{if} \ x \in \R \setminus I,
\end{cases}
\end{equation}
then $\tilde{u} \in BV(\R)$. In addition, we may prove that, if $u\in BV(I)$, the approximate limits of $u$ in $a$ from the right, $u(a+)$, and in $b$ from the left, $u(b-)$, exist and they coincide with the precise representative of $2 \tilde{u}$ on those points. In other words, we have
\begin{equation*}
u(a+) := \lim_{r \to 0} \frac{1}{r} \int_{a}^{a + r} u(x) \, dx  \ \text{ and } \ u(b-) := \lim_{r \to 0} \frac{1}{r} \int_{b - r}^{b} u(x) \, dx,
\end{equation*}
so that, thanks to Lemma \ref{result:BV_bounded}, we obtain
\begin{equation} \label{eq:lim_ab_bound}
\max\{|u(a+)|, |u(b-)|\} \le \max\left\{1, \frac{1}{b - a}\right\} \|u\|_{BV(I)}.
\end{equation}
In addition, thanks to \cite[Corollary 3.80]{AFP} it is possible to see that, for any standard mollifier $\rho$, we have
\begin{equation} \label{eq:extreme_moll_conv}
(\rho_{\eps} \ast u)(a) \to \frac{u(a+)}{2} \text{ and } (\rho_{\eps} \ast u)(b) \to \frac{u(b-)}{2}.
\end{equation}
Finally, it is easy to notice that, consistently with \cite[Remark 4.1]{BLNT},
\begin{equation} \label{eq:grad_zero_extension}
D \tilde{u} = Du \res I + u(a+) \delta_{a} - u(b-)\delta_{b},
\end{equation}
where $\delta$ is the Dirac delta measure; while clearly $D \tilde{u} = 0$ in $\R \setminus \overline{I}$.

\begin{remark} \label{rem:Sobolev_AC_emb}
It is well known that $W^{1, p}(I) \emb BV(I)$ for all $p \in [1,\infty]$, so that, thanks to Lemma \ref{result:BV_bounded} and H\"older's inequality, for all Sobolev function $u \in W^{1, p}(I)$ we have
\begin{equation*}
\|u\|_{L^{\infty}(I)} \le \max\left\{1,\frac{1}{b-a}\right\} \|u\|_{BV(I)} \le \max\left\{(b -a),1\right\} (b-a)^{-\frac{1}{p}} \|u\|_{W^{1, p}(I)}.
\end{equation*}
As a consequence, the approximate limits of $u$ in $a$ from the right, $u(a+)$, and in $b$ from the left, $u(b-)$, exist and satisfy
\begin{equation*}
\max\{|u(a+)|, |u(b-)|\} \le \max\left\{(b -a),1\right\} (b-a)^{-\frac{1}{p}} \|u\|_{W^{1, p}(I)}.
\end{equation*}
In particular, Sobolev functions on an interval $I$ admit absolutely continuous representatives in $AC(\overline{I})$ (see \cite[Theorem 3.28 and Definition 3.31]{AFP} and the subsequent observations therein). Therefore, in the following we shall identify Sobolev functions with their absolutely continuous representatives, and write $u(a), u(b)$ instead of $u(a+), u(b-)$.
\end{remark}

Now, we recall some known facts from Measure Theory. If $\mu\in\mathcal{M}(I)$, then, by the Radon-Nikodym Theorem, we can split it into an absolutely continuous part (with respect to the Lebsegue measure) $\mu_{ac}$, and a singular part $\mu_s$, such that $\mu=\mu_{ac}+\mu_s$. Moreover, we can decompose the singular part $\mu_s$ into an atomic measure $\mu_j$ and a diffuse measure $\mu_c$; in this way, we have 
$$
\mu=\mu_{ac}+\mu_s=\mu_{ac}+\mu_j+\mu_c.
$$
In particular, this decomposition induces an analogous decomposition on $BV$ functions on the real line, which does not have a counterpart in higher dimensions. Namely, following \cite[Corollary 3.33]{AFP}, for any $u\in BV(I)$ we have
$$
u=u_{ac}+u_j+u_c,
$$
where $u_{ac} \in W^{1, 1}(I)$, $u_{j}$ is a jump function and $u_{c}$ is a Cantor function; that is, they satisfy
\begin{equation*}
(D u)_{ac} = u_{ac}' \Leb{1}, \ \ (Du)_{j} = Du_{j} \text{ and } (Du)_{c} = D u_{c}.
\end{equation*}

\subsection{Fractional Sobolev spaces on the real line}

We recall here the definition of Gagliardo-Slobodeckij fractional Sobolev space. For an exhaustive exposition of this theory, we refer the interested reader to \cite{MR2944369}.

\begin{definition}
	Let $s\in(0,1)$ and $p\in[1,\infty)$. We define the {\em fractional Sobolev space} $W^{s,p}(I)$ as
	$$
	W^{s,p}(I):=\left\{u\in L^p(I): (x, y) \to \frac{u(x)-u(y)}{|x-y|^{\frac{1}{p}+s}}\in L^p(I\times I)\right\}.
	$$
	We define the \emph{Gagliardo-Slobodeckij seminorm} of $u$ as
	$$
	[u]_{W^{s,p}(I)}:=\left(\int_a^b\int_a^b\frac{|u(x)-u(y)|^p}{|x-y|^{sp+1}}dxdy\right)^{1/p}.
	$$
\end{definition}

The space $W^{s,p}(I)$, endowed with the norm
$$
\left\|u\right\|_{W^{s,p}(I)}:= \|u\|_{L^p(I)}+[u]_{W^{s,p}(I)},
$$
is a Banach space, which is Hilbert when $p=2$ (see \cite{MR2944369}).

\begin{remark}
In the case $s>1$, $s=m+\sigma$ for some $m\in\N$ and $\sigma\in(0,1)$, we say that $u$ belongs to the fractional Sobolev space $W^{s,p}(I)$ if $u\in W^{m,p}(I)$ and $u^{(m)}\in W^{\sigma,p}(I)$.\end{remark}

We recall that the density of smooth compactly supported functions in $W^{s, p}(I)$ is ensured only in some cases.

\begin{theorem}[\cite{TesiLuca}, Theorem D.2.1.]
	\label{th:densityluca}
	Let $I$ a bounded open interval, $s\in(0,1)$ and $1\le p<\infty$ such that $sp<1$.
	
	Then, we have $\overline{C^\infty_c(I)}^{\left\|\cdot\right\|_{W^{s,p}(I)}}=W^{s,p}(I)$; that is, $C^\infty_c(I)$ is dense in $W^{s,p}(I)$.
\end{theorem}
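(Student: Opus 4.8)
The plan is to establish density through three reductions — truncate, cut off near the endpoints, then mollify — with the hypothesis $sp<1$ entering only to kill the error made by the cutoff near $\partial I=\{a,b\}$. \emph{Step 1: reduction to bounded functions.} Given $u\in W^{s,p}(I)$, let $u_M:=(u\wedge M)\vee(-M)$. Since $t\mapsto(t\wedge M)\vee(-M)$ is $1$-Lipschitz we get $|u_M(x)-u_M(y)|\le|u(x)-u(y)|$, hence $|(u-u_M)(x)-(u-u_M)(y)|\le 2|u(x)-u(y)|\in L^p(I\times I)$; as this quantity tends to $0$ a.e.\ as $M\to\infty$, dominated convergence gives $[u-u_M]_{W^{s,p}(I)}\to 0$, and $\|u-u_M\|_{L^p(I)}\to 0$ is clear. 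So it suffices to approximate functions in $L^\infty(I)\cap W^{s,p}(I)$.

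\emph{Step 2: cutoff near the endpoints.} For small $\eps>0$ fix $\eta_\eps\in C^\infty(\R)$ with $0\le\eta_\eps\le 1$, $\eta_\eps\equiv 0$ outside $(a+\eps,b-\eps)$, $\eta_\eps\equiv 1$ on $[a+2\eps,b-2\eps]$ and $|\eta_\eps'|\le 2/\eps$, and set $v_\eps:=\eta_\eps u$, which has support in $[a+\eps,b-\eps]$ and lies in $W^{s,p}(I)$ (the product of a $W^{s,p}$ function with a bounded Lipschitz function is again in $W^{s,p}$). I claim $\|u-v_\eps\|_{W^{s,p}(I)}\to 0$. The $L^p$-part is immediate since $|u-v_\eps|\le|u|$ is supported on a set of measure $O(\eps)$. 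For the seminorm, writing $w_\eps:=(1-\eta_\eps)u$, supported in the boundary strip $(a,a+2\eps)\cup(b-2\eps,b)$, I split $I\times I$ into: (i) both variables near the same endpoint, where $|w_\eps(x)-w_\eps(y)|\le|u(x)-u(y)|+\|u\|_{L^\infty}|\eta_\eps(x)-\eta_\eps(y)|$ — the first term gives a tail of the finite integral $[u]_{W^{s,p}(I)}^p$ hence tends to $0$, while using $|\eta_\eps(x)-\eta_\eps(y)|\le\min\{1,2|x-y|/\eps\}$ the second contributes $O(\|u\|_{L^\infty}^p\,\eps^{1-sp})$; (ii) one variable in a boundary strip and the other in the bulk $[a+2\eps,b-2\eps]$, where $w_\eps$ vanishes, so that $|w_\eps(x)-w_\eps(y)|\le\|u\|_{L^\infty}$ and an elementary $\int\mathrm{dist}(\,\cdot\,,\partial I)^{-sp}$-type estimate again yields $O(\|u\|_{L^\infty}^p\,\eps^{1-sp})$; (iii) one variable near $a$ and the other near $b$, a region of measure $O(\eps^2)$ with bounded integrand. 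Every exponent that appears is positive precisely because $0<sp<1$, so $[w_\eps]_{W^{s,p}(I)}\to 0$.

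\emph{Step 3: mollification, and conclusion.} Since $v_\eps$ has compact support in $I$, its extension by zero $\tilde v_\eps$ belongs to $W^{s,p}(\R)$: indeed $[\tilde v_\eps]_{W^{s,p}(\R)}^p=[v_\eps]_{W^{s,p}(I)}^p+2\int_I\int_{\R\setminus I}|v_\eps(x)|^p|x-y|^{-sp-1}\,dy\,dx$, and the last integral is finite because $\mathrm{dist}(\operatorname{supp}v_\eps,\R\setminus I)>0$. By the usual argument — estimating the Gagliardo seminorm of $\rho_\delta*\tilde v_\eps-\tilde v_\eps$ by that of translates of $\tilde v_\eps$, and using continuity of translations in $L^p(\R^2)$ together with Minkowski's integral inequality — one has $\rho_\delta*\tilde v_\eps\to\tilde v_\eps$ in $W^{s,p}(\R)$ as $\delta\to 0$, and for $\delta$ small enough $\rho_\delta*\tilde v_\eps\in C^\infty_c(I)$. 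Restricting to $I$ and combining Steps 1--3 by a diagonal argument shows that $C^\infty_c(I)$ is dense in $W^{s,p}(I)$.

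The main obstacle is Step 2: one must organize the splitting of $I\times I$ so that every contribution involving the boundary strip carries a positive power of $\eps$, the delicate case being the nonlocal interaction between the strip and the bulk, which produces the factor $\eps^{1-sp}$. This is exactly — and only — where $sp<1$ is used; for $sp\ge 1$ the cutoff error need not vanish, consistently with $\overline{C^\infty_c(I)}=W^{s,p}_0(I)\subsetneq W^{s,p}(I)$ in that range.
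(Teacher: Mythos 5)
Your proof is correct. Note, however, that the paper does not prove this statement at all: it is imported verbatim from \cite{TesiLuca}, Theorem D.2.1, so there is no internal proof to compare against. Your argument is the standard truncate--cut off--mollify scheme, and each step checks out: the truncation step via the $1$-Lipschitz bound and dominated convergence, the three-region splitting in the cutoff step (where the strip--strip interaction gives $O(\eps^{1-sp})$ from $\min\{1,2|x-y|/\eps\}$ and the strip--bulk interaction gives $O(\eps^{1-sp})$ from integrating $(a+2\eps-x)^{-sp}$), and the mollification step via Minkowski's integral inequality and continuity of translations. The one stylistic difference from the cited source is worth flagging: the proof in \cite{TesiLuca} controls the cutoff error by invoking the fractional Hardy inequality (the same Theorem D.1.4 that appears in this paper as Lemma \ref{lem:hardy}), which bounds $\int_I |u|^p\,\delta_I^{-sp}\,dx$ directly and so handles unbounded $u$ in one stroke; you instead reduce to $u\in L^\infty$ first and then run elementary estimates on the boundary strips. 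Your route is more self-contained (no Hardy inequality needed) at the cost of the extra truncation step; both make the hypothesis $sp<1$ do exactly the same job, namely rendering $\eps^{1-sp}$ (equivalently, the Hardy weight $\delta_I^{-sp}$) harmless, consistent with the failure of density for $sp\ge 1$.
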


\begin{remark}
	\label{rem:densitàdellecc1}
	As a byproduct of Theorem \ref{th:densityluca}, we have that also $C^1_c(I)$ is dense in $W^{s,1}(I)$.
\end{remark}

Now, we recall a fractional Hardy inequality introduced in \cite{MR2085428}. For the sake of simplicity, we state it only in the one dimensional case for open bounded intervals, though the result holds in any dimension and for any open bounded set with Lipschitz boundary; see also \cite[Theorem D.1.4]{TesiLuca} for a different proof.

\begin{lemma}{\cite[Theorem D.1.4]{TesiLuca}}
	\label{lem:hardy}
	Let $s\in(0,1)$, $p\in[1,\infty)$ such that $sp<1$ and $I=(a,b)$. Then, there exists $c=c(s,p,a,b)>0$ such that
	\begin{equation*}
	\int_a^b\frac{|u(x)|^p}{|\delta_I(x)|^{sp}}dx\leq c\left\|u\right\|^p_{W^{s,p}(I)} \text{ for all } u\in W^{s,p}(I),
	\end{equation*}
	where $|\delta_I(x)|:={\rm dist}(x,\partial I)=\min\{x-a,b-x\}$.
\end{lemma}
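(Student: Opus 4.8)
The plan is to reduce to smooth compactly supported functions and then prove the inequality directly by an averaging argument with a deliberately enlarged window. Since $sp<1$, Theorem~\ref{th:densityluca} ensures that $C^\infty_c(I)$ is dense in $W^{s,p}(I)$: given $u\in W^{s,p}(I)$ I would take $u_n\in C^\infty_c(I)$ with $u_n\to u$ in $W^{s,p}(I)$ and, along a subsequence, $\Leb{1}$-a.e.\ in $I$, and pass the inequality to the limit by Fatou's lemma --- this works provided the constant obtained for $C^\infty_c(I)$ functions does not depend on the function. So it suffices to prove $\int_a^b \frac{|u(x)|^p}{|\delta_I(x)|^{sp}}\,dx\le c\,\|u\|_{W^{s,p}(I)}^p$ for $u\in C^\infty_c(I)$ with $c=c(s,p,a,b)$. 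Since $|\delta_I(x)|=x-a$ on $(a,\tfrac{a+b}{2})$, $|\delta_I(x)|=b-x$ on $(\tfrac{a+b}{2},b)$, and the two endpoints are treated symmetrically, it is enough to estimate, for a large parameter $\lambda>1$ to be fixed later,
\[
H_\lambda:=\int_a^{a+\frac{b-a}{1+\lambda}}\frac{|u(x)|^p}{(x-a)^{sp}}\,dx,
\]
since on the complementary set $|\delta_I|$ is bounded below and the corresponding integral is $\le\big(\tfrac{1+\lambda}{b-a}\big)^{sp}\|u\|_{L^p(I)}^p$ (and symmetrically near $b$).

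For the core estimate, note that if $x$ lies in the above layer and $r:=x-a$, then $(x,x+\lambda r)\subset(a,b)$, and I would write
\[
u(x)=\frac{1}{\lambda r}\int_x^{x+\lambda r}\big(u(x)-u(y)\big)\,dy+\frac{1}{\lambda r}\int_x^{x+\lambda r}u(y)\,dy.
\]
Using $|A+B|^p\le 2^{p-1}(|A|^p+|B|^p)$, Jensen's inequality applied to each average, dividing by $r^{sp}$ and integrating in $x$ over the layer, I obtain two contributions. In the first, $|x-y|<\lambda r$ on the region of integration, so $r^{-sp-1}\le\lambda^{sp+1}|x-y|^{-sp-1}$ and this term is $\le 2^{p-1}\lambda^{sp}[u]_{W^{s,p}(I)}^p$. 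In the second, Fubini's theorem: for fixed $y$ the constraints $x<y<x+\lambda(x-a)$ force $x-a>\tfrac{y-a}{1+\lambda}$, hence $\int(x-a)^{-sp-1}\,dx\le\frac{(1+\lambda)^{sp}}{sp}(y-a)^{-sp}$, producing the term $\frac{2^{p-1}(1+\lambda)^{sp}}{\lambda\, sp}\int_a^b\frac{|u(y)|^p}{(y-a)^{sp}}\,dy$. Splitting this last integral into the layer (which gives back $H_\lambda$) and its complement (bounded by $C\|u\|_{L^p(I)}^p$) yields
\[
H_\lambda\le 2^{p-1}\lambda^{sp}[u]_{W^{s,p}(I)}^p+\frac{2^{p-1}(1+\lambda)^{sp}}{\lambda\, sp}\Big(H_\lambda+\big(\tfrac{1+\lambda}{b-a}\big)^{sp}\|u\|_{L^p(I)}^p\Big).
\]
Finally I would fix $\lambda=\lambda(s,p)$ so large that $k:=\frac{2^{p-1}(1+\lambda)^{sp}}{\lambda\, sp}<\tfrac12$ --- which is possible exactly because $sp<1$ forces $(1+\lambda)^{sp}/\lambda\to0$ as $\lambda\to\infty$ --- and absorb $kH_\lambda$ into the left-hand side; adding the symmetric bound near $b$ and using $[u]^p+\|u\|_{L^p(I)}^p\le\|u\|_{W^{s,p}(I)}^p$ completes the argument.

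The hard part is precisely this absorption, and it is where the hypothesis $sp<1$ enters in an essential way. The naive choice $\lambda=1$ (averaging over an interval whose length equals the distance to the boundary) reproduces on the right-hand side exactly the integral one wants to bound, but with a multiplicative constant $\ge1$, so no absorption is possible; enlarging the window by a large factor $\lambda$ costs only $\lambda^{sp}$ in the Gagliardo--Slobodeckij term while it gains a factor $1/\lambda$ in the self-referential term, and for $sp<1$ the gain wins (consistently, the inequality genuinely fails without $sp<1$). The only other point requiring a little care is to make sure the density reduction is legitimate, i.e.\ that the constant for $C^\infty_c(I)$ functions is uniform; the displayed bound makes this transparent.
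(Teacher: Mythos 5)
Your argument is correct. Note first that the paper itself offers no proof of this lemma: it is quoted verbatim from the literature (Dyda's fractional Hardy inequality, with an alternative proof in Appendix D of Lombardini's thesis), so there is no internal proof to compare against. Your self-contained averaging-and-absorption argument is a legitimate proof and is essentially the standard one for this inequality. The two points that genuinely need care are both handled: (i) the a priori finiteness of $H_\lambda$, without which the absorption step $H_\lambda \le \dots + kH_\lambda$ is vacuous — your reduction to $C^\infty_c(I)$ via Theorem \ref{th:densityluca} (legitimate precisely because $sp<1$) plus Fatou takes care of this, since for compactly supported smooth $u$ the weight $(x-a)^{-sp}$ is integrable; and (ii) the choice of the window length $\lambda r$ with $\lambda$ large, which is exactly what makes the self-referential coefficient $2^{p-1}(1+\lambda)^{sp}/(\lambda sp)$ drop below $1$ — possible only because $sp<1$, consistent with the known failure of the inequality at $sp=1$. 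The bookkeeping (the window $(x,x+\lambda r)$ staying inside $(a,b)$ for $x$ in the layer of width $(b-a)/(1+\lambda)$, the Fubini bound $\int_{x-a>\frac{y-a}{1+\lambda}}(x-a)^{-sp-1}\,dx\le \frac{(1+\lambda)^{sp}}{sp}(y-a)^{-sp}$, and the final constant depending only on $s,p,a,b$) all checks out.
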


\subsection{Fractional integrals}

\begin{definition} \label{def:fract_int}
Let $u\in L^1\left(I\right)$ and $s\in\left(0,1\right)$. We define the {\em left and right Riemann-Liouville $s$-fractional integrals} as
\begin{equation} \label{eq:frac_int_1}
I^s_{a+}\left[u\right]\left(x\right):=\frac{1}{\Gamma\left(s\right)}\int_a^x \frac{u\left(t\right)}{\left(x-t\right)^{1-s}} \, dt,
\end{equation}
and
\begin{equation} \label{eq:frac_int_2}
I^s_{b-}\left[u\right]\left(x\right):=\frac{1}{\Gamma\left(s\right)}\int_x^b \frac{u\left(t\right)}{\left(t-x\right)^{1-s}}\, dt.
\end{equation}
\end{definition}

\begin{remark} \label{rem:frac_int_well_posed}
It is not difficult to check that definitions \eqref{eq:frac_int_1} and \eqref{eq:frac_int_2} are well posed for all $u \in L^{1}(I)$ and $s \in (0, 1)$. Indeed, we have
\begin{align*}
\|I^s_{a+}\left[|u|\right]\|_{L^{1}(I)} & = \frac{1}{\Gamma\left(s\right)} \int_{a}^{b} \int_{a}^{x} \frac{|u(t)|}{(x - t)^{1 - s}} \, d t \, dx = \frac{1}{\Gamma\left(s\right)} \int_{a}^{b} \int_{t}^{b} \frac{|u(t)|}{(x - t)^{1 - s}} \, d x \, dt \\
& = \frac{1}{s \Gamma\left(s\right)} \int_{a}^{b} |u(t)| (b - t)^{s} \, d t \le \frac{(b- a)^{s}}{\Gamma \left (s + 1 \right )} \|u\|_{L^{1}(I)},
\end{align*}
so that $I^s_{a+}\left[|u|\right] \in L^{1}(I)$, which implies $I^{s}_{a+}[u] \in L^1(I)$ with the same bound on the $L^1$-norm. In particular, $I^s_{a+}\left[u\right](x)$ is well defined for $\Leb{1}$-a.e. $x \in I$. A similar argument shows that also $I^s_{b-}\left[u\right] \in L^{1}(I)$, with
\begin{equation*}
\|I^s_{b-}\left[u\right]\|_{L^{1}(I)} \le \frac{(b- a)^{s}}{\Gamma \left (s + 1 \right )} \|u\|_{L^{1}(I)},
\end{equation*} 
so that $I^s_{b-}\left[u\right]$ is well defined almost everywhere in $I$.
\end{remark}

For the ease of the reader, we summarize in the following Propositions \ref{prop:cont_BLNT} and \ref{prop:holcontinuity} some results on the continuity properties of $I_{a+}^{s}$ presented in \cite[Section 3]{MR1347689} and \cite[Theorem 4]{MR1544927}. As a preliminary result, we recall here the known fact that $s$-Riemann-Liouville fractional integrals are continuous mappings from $L^{1}(I)$ into $L^{\frac{1}{1-s},\infty}(I)$, in analogy with the continuity properties of the Riesz potential of order $s$ on the whole $\R$, defined as
\begin{equation} \label{eq:Riesz_potential_R_def}
I^{s}[v](x) := \frac{\Gamma \left ( \frac{1 - s}{2} \right )}{2^{s} \sqrt{\pi} \Gamma \left ( \frac{s}{2} \right )} \int_{- \infty}^{+ \infty} \frac{v(t)}{|x - t|^{1 - s}} \, dt, 
\end{equation}
for $v \in L^{1}(\R)$, for which we refer to \cite[Chapter 5]{MR0290095} and \cite[Chapter 1]{G14}, for instance.
\begin{lemma}
	\label{lem:weakpq}
	Let $s\in(0,1)$. The fractional integral $I^s_{a+}$ is a weak type $\left (1,\frac{1}{1-s} \right )$ operator; namely, there exists $C_s > 0$ such that
\begin{equation} \label{eq:weakpq}
\Leb{1}\left(\left\{x\in I: |I^s_{a+}[u](x)|>t \right\}\right) \le C_s\left(\frac{\left\|u\right\|_{L^1(I)}}{t}\right)^{\frac{1}{1-s}} \text{ for all } u \in L^{1}(I).
\end{equation}
\end{lemma}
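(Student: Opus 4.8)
The plan is to reduce inequality \eqref{eq:weakpq} to the classical weak-type $\left(1,\frac{1}{1-s}\right)$ bound for the Riesz potential $I^{s}$ on the whole line, which is part of the Hardy--Littlewood--Sobolev theory (see \cite[Chapter 5]{MR0290095} and \cite[Chapter 1]{G14}). First, given $u\in L^{1}(I)$, I would extend it by zero to $\bar u\in L^{1}(\R)$, supported in $\overline{I}$, so that $\|\bar u\|_{L^{1}(\R)}=\|u\|_{L^{1}(I)}$. Next I would observe the pointwise domination valid for $\Leb{1}$-a.e. $x\in I$,
\begin{equation*}
\bigl|I^{s}_{a+}[u](x)\bigr|\le\frac{1}{\Gamma(s)}\int_{a}^{x}\frac{|u(t)|}{(x-t)^{1-s}}\,dt\le\frac{1}{\Gamma(s)}\int_{-\infty}^{+\infty}\frac{|\bar u(t)|}{|x-t|^{1-s}}\,dt=c_{s}\,I^{s}\bigl[|\bar u|\bigr](x),
\end{equation*}
where $c_{s}:=\dfrac{2^{s}\sqrt{\pi}\,\Gamma(s/2)}{\Gamma(s)\,\Gamma\!\left(\frac{1-s}{2}\right)}>0$ is the reciprocal of the normalising factor in \eqref{eq:Riesz_potential_R_def}.

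Then, for every $t>0$, the domination gives
\begin{equation*}
\Leb{1}\bigl(\{x\in I:|I^{s}_{a+}[u](x)|>t\}\bigr)\le\Leb{1}\bigl(\{x\in\R:I^{s}[|\bar u|](x)>t/c_{s}\}\bigr),
\end{equation*}
and I would bound the right-hand side by $A_{s}\bigl(c_{s}\|\bar u\|_{L^{1}(\R)}/t\bigr)^{\frac{1}{1-s}}$ using the weak-type estimate for $I^{s}$, with $A_{s}>0$ its constant; since $\|\bar u\|_{L^{1}(\R)}=\|u\|_{L^{1}(I)}$, this is exactly \eqref{eq:weakpq} with $C_{s}:=A_{s}\,c_{s}^{\frac{1}{1-s}}$. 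Equivalently, one may argue by the weak Young inequality: the kernel $x\mapsto\frac{1}{\Gamma(s)}x^{s-1}\mathbf{1}_{(0,\infty)}(x)$ lies in $L^{\frac{1}{1-s},\infty}(\R)$, and $I^{s}_{a+}[u]$ is the restriction to $I$ of its convolution with $\bar u$, so that $\|I^{s}_{a+}[u]\|_{L^{\frac{1}{1-s},\infty}(I)}\le C_{s}\|u\|_{L^{1}(I)}$, which is \eqref{eq:weakpq} restated in terms of the quasi-norm up to relabelling the constant.

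I do not expect a genuine obstacle here, the statement being essentially a citation; the only mildly delicate point is the bookkeeping of the $s$-dependent constants. If a self-contained proof avoiding Hardy--Littlewood--Sobolev is preferred, I would instead fix $\delta>0$, split $I^{s}_{a+}[|u|](x)$ into the contributions of $\{t:|x-t|<\delta\}$ and of $\{t:|x-t|\ge\delta\}$, estimate the first by $C_{s}\delta^{s}\,\mathrm{M}u(x)$ via a decomposition into dyadic annuli centred at $x$ (here $\mathrm{M}$ is the Hardy--Littlewood maximal operator and the geometric series $\sum_{k}2^{-ks}$ converges precisely because $s>0$) and the second by $\frac{\delta^{s-1}}{\Gamma(s)}\|u\|_{L^{1}(I)}$, then optimise in $\delta$ to get $I^{s}_{a+}[|u|](x)\le C_{s}(\mathrm{M}u(x))^{1-s}\|u\|_{L^{1}(I)}^{s}$; \eqref{eq:weakpq} would then follow from the weak $(1,1)$ bound for $\mathrm{M}$ together with the identity $1+\frac{s}{1-s}=\frac{1}{1-s}$.
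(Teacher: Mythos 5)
Your argument is correct and is essentially the paper's own proof: both extend $u$ by zero to the whole line, dominate $|I^s_{a+}[u]|$ pointwise by $c_s I^s[|\bar u|]$ with exactly the constant $c_s=\frac{2^{s}\sqrt{\pi}\,\Gamma(s/2)}{\Gamma(s)\,\Gamma\left(\frac{1-s}{2}\right)}$, and then invoke the classical weak type $\left(1,\frac{1}{1-s}\right)$ bound for the Riesz potential from \cite{MR0290095} and \cite{G14}. The alternative routes you sketch (weak Young inequality, Hedberg-type splitting with the maximal function) would also work but are not needed.
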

	\begin{proof}
		Given $u \in L^{1}(I)$, we set
		\begin{equation*}
		\tilde{u}(x):=\begin{cases}
		u(x) & \text{ if } \  x\in I \\
		0 & \text{ if } \ x\notin I,
		\end{cases}
		\end{equation*}
		and, for $c\in\R$, we set $H_c(x):=H(x-c)$, where $H$ denotes the Heaviside function $H(x):=\chi_{(0,+\infty)}(x)$.
Clearly, we have $$\left\|u\right\|_{L^1(I)}=\left\|\tilde{u}\right\|_{L^1(\R)}=\left\|\tilde{u}H_a\right\|_{L^1(\R)}.$$
		Now, we notice that, for $\Leb{1}$-a.e. $x\in I$,
		$$
		I^s_{a+}[u](x)=I^s_{a+}[\tilde{u}](x),
		$$
		and
\begin{equation} \label{eq:fract_int_conv_equiv}
		I^s_{a+}[\tilde{u}](x)=((\tilde{u}H_a)\ast(KH))(x),
\end{equation}
		where $\ast$ denotes the usual convolution operator on $\R$, and $\displaystyle K(y):=\frac{1}{\Gamma(s) |y|^{1-s}}$. In addition, for all $x\in \R$ we have
\begin{equation*}
|I^{s}_{a+}[\tilde{u}](x)| \le ((|\tilde{u}| H_{a}) \ast K)(x) = \frac{1}{\Gamma(s)} \int_{-\infty}^{+\infty} \frac{|\tilde{u}|(t) H_{a}(t)}{|x-t|^{1-s}} \, dt = c_{s} I^{s}(|\tilde{u}| H_{a})(x),
\end{equation*}
where $I^{s}$ is the Riesz potential of order $s$ on the whole $\R$, as defined in \eqref{eq:Riesz_potential_R_def}, and 
$$ c_{s} =  \frac{2^{s} \sqrt{\pi} \Gamma \left ( \frac{s}{2} \right )}{\Gamma(s) \Gamma \left ( \frac{1 - s}{2} \right )}.$$
		Thus, thanks to the weak type $\left (1,\frac{1}{1-s} \right)$ estimates for the Riesz potentials on the whole $\R$ (see \cite[Chapter 5, Theorem 1]{MR0290095} and \cite[Theorem 1.2.3]{G14}), there exists a constant $C_{s} > 0$ such that, for all $t>0$,
		\begin{align*}
		\Leb{1}\left(\left\{x\in I: |I^s_{a+}[u](x)|>t \right\}\right) & =\Leb{1}\left(\left\{x\in I: |I^s_{a+}[\tilde{u}](x)|>t\right\}\right) \\
		& \le\Leb{1}\left(\left\{x\in \R: c_{s}  I^{s}(|\tilde{u}| H_{a})(x) >t \right\}\right) \\
		& \le C_s\left(\frac{\left\|\tilde{u}H_a\right\|_{L^1(\R)}}{t}\right)^{\frac{1}{1-s}}=C_s\left(\frac{\left\|u\right\|_{L^1(I)}}{t}\right)^{\frac{1}{1-s}}.
		\end{align*}
This ends the proof.
		\end{proof}

\begin{remark}
	We notice that also $I^s_{b-}$ is a weak type $\left (1,\frac{1}{1-s}\right )$ operator. Indeed, using the same notation as in the proof of Lemma \ref{lem:weakpq}, it is enough to observe that
	$$
	I^s_{b-}[\tilde{u}]=(\tilde{u}H_b)\ast(KH_{-}),
	$$
	where $H_b(x):=H(b-x)$ and $H_{-}(x):=H(-x)$. Then the proof is analogous.
\end{remark}

\begin{proposition}[Continuity properties of the fractional integral in $L^p$ spaces] \label{prop:cont_BLNT}
Let $s\in(0,1)$. The fractional integral $I^s_{a+}$ is a continuous operator 
\begin{enumerate}
\item from $L^p(I)$ into $L^p(I)$ for all $p \in [1, \infty]$,
\item from $L^1(I)$ into $L^{\frac{1}{1-s},\infty}(I)$, and so into $L^r(I)$, for all $r\in \left [1,\frac{1}{1-s} \right )$,
\item from $L^p(I)$ into $L^r(I)$ for all $p\in \left (1,\frac{1}{s} \right )$ and $r\in \left [1,\frac{p}{1-sp} \right ]$,
\item from $L^p(I)$ into $C^{0,s-\frac{1}{p}}(\overline{I})$ for all $p \in \left (\frac{1}{s}, \infty \right )$,
\item from $L^{1/s}(I)$ into $L^r(I)$ for all $r\in[1,\infty)$,
\item from $L^{\infty}(I)$ into $C^{0,s}(\overline{I})$.
\end{enumerate}
\end{proposition}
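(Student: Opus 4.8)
The plan is to deduce each of the six items from one of three mechanisms: a convolution representation together with Young's inequality (for item (1)); the pointwise domination of $I^{s}_{a+}[u]$ by the Riesz potential $I^{s}$ on $\R$, which transfers the classical weak-type and Hardy--Littlewood--Sobolev estimates to the interval (items (2), (3), (5)); and a direct splitting of the increment $I^{s}_{a+}[u](y) - I^{s}_{a+}[u](x)$ combined with H\"older's inequality (items (4) and (6)). None of these is deep, but the bookkeeping with exponents has to be done carefully.

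For item (1) I would use the identity $I^{s}_{a+}[u] = (\tilde{u} H_{a}) \ast (KH)$ from \eqref{eq:fract_int_conv_equiv}, with $K(y) = \frac{1}{\Gamma(s)|y|^{1-s}}$, and observe that for $x \in I$ only the restriction of $KH$ to $(0, b-a)$ matters, so that $I^{s}_{a+}[u] = (\tilde{u}H_{a}) \ast (K \chi_{(0, b-a)})$ on $I$; since $\|K \chi_{(0, b-a)}\|_{L^{1}(\R)} = \frac{(b-a)^{s}}{\Gamma(s+1)}$ and $\|\tilde{u}H_{a}\|_{L^{p}(\R)} = \|u\|_{L^{p}(I)}$, Young's convolution inequality gives $\|I^{s}_{a+}[u]\|_{L^{p}(I)} \le \frac{(b-a)^{s}}{\Gamma(s+1)} \|u\|_{L^{p}(I)}$ for all $p \in [1, \infty]$ at once (recovering in particular the estimate of Remark \ref{rem:frac_int_well_posed}). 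Item (2) is immediate: the first mapping is Lemma \ref{lem:weakpq}, and the further inclusion into $L^{r}(I)$ for $r \in [1, \tfrac{1}{1-s})$ is the embedding $L^{\frac{1}{1-s}, \infty}(I) \emb L^{r}(I)$ of Lemma \ref{lem:embed_L_p_weak}. For item (3) I would use the pointwise bound $|I^{s}_{a+}[u]| \le c_{s}\, I^{s}(|\tilde{u}| H_{a})$ established in the proof of Lemma \ref{lem:weakpq} together with the one-dimensional Hardy--Littlewood--Sobolev inequality, which gives $I^{s} : L^{p}(\R) \to L^{q}(\R)$ continuously for $\frac{1}{q} = \frac{1}{p} - s$ and $1 < p < \frac{1}{s}$; since $q = \frac{p}{1-sp}$, this yields continuity of $I^{s}_{a+}$ from $L^{p}(I)$ into $L^{\frac{p}{1-sp}}(I)$, and composing with the finite-measure embedding $L^{\frac{p}{1-sp}}(I) \emb L^{r}(I)$ for $r \le \frac{p}{1-sp}$ closes the argument. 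Item (5) then follows from item (3): since $\Leb{1}(I) < \infty$ we have $L^{1/s}(I) \emb L^{p}(I)$ for every $p \le \frac{1}{s}$, so, given $r \in [1, \infty)$, one picks $p \in (1, \frac{1}{s})$ close enough to $\frac{1}{s}$ that $\frac{p}{1-sp} \ge r$ and chains $L^{1/s}(I) \emb L^{p}(I) \to L^{\frac{p}{1-sp}}(I) \emb L^{r}(I)$.

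The computational core is items (4) and (6). For $x < y$ in $\overline{I}$ I would write
\begin{equation*}
\Gamma(s) \big( I^{s}_{a+}[u](y) - I^{s}_{a+}[u](x) \big) = \int_{a}^{x} u(t) \big( (y-t)^{s-1} - (x-t)^{s-1} \big) \, dt + \int_{x}^{y} u(t) (y-t)^{s-1} \, dt.
\end{equation*}
For item (4), the assumption $p \in (\frac{1}{s}, \infty)$ forces the conjugate exponent $p' = \frac{p}{p-1}$ to satisfy $(1-s)p' < 1$ (this equivalence with $sp > 1$ is exactly what keeps the singular kernel $p'$-integrable), so H\"older's inequality reduces the estimate to bounding $\int_{a}^{x} \big( (x-t)^{s-1} - (y-t)^{s-1} \big)^{p'} \, dt$ and $\int_{x}^{y} (y-t)^{(s-1)p'} \, dt$. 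Using the elementary inequality $(A - B)^{p'} \le A^{p'} - B^{p'}$ valid for $A \ge B \ge 0$ and $p' \ge 1$, the first integral is dominated by $\int_{a}^{x} (x-t)^{(s-1)p'} dt - \int_{a}^{x} (y-t)^{(s-1)p'} dt$, which after discarding the nonpositive contribution $(x-a)^{(s-1)p'+1} - (y-a)^{(s-1)p'+1}$ is $\le \frac{(y-x)^{(s-1)p'+1}}{(s-1)p'+1}$; both integrals then contribute at most $C \|u\|_{L^{p}(I)} (y-x)^{s - 1/p}$, and the uniform bound on $I^{s}_{a+}[u]$ follows from the same computation with the $y$-term omitted, giving $I^{s}_{a+}[u] \in C^{0, s - 1/p}(\overline{I})$ with norm controlled by $\|u\|_{L^{p}(I)}$. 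Item (6) is the same splitting with $\|u\|_{L^{\infty}(I)}$ replacing the $L^{p}$ norm: here $\int_{a}^{x} \big( (x-t)^{s-1} - (y-t)^{s-1} \big) \, dt = \frac{1}{s} \big( (x-a)^{s} - (y-a)^{s} + (y-x)^{s} \big) \le \frac{(y-x)^{s}}{s}$ and $\int_{x}^{y} (y-t)^{s-1} dt = \frac{(y-x)^{s}}{s}$, producing the H\"older exponent $s$.

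I expect the only genuinely delicate step to be this last one: one must pick the conjugate exponent precisely (the threshold $sp = 1$ in item (4) is exactly where the kernel ceases to be locally integrable against $L^{p}$ functions), and one must invoke the monotonicity inequality $(A-B)^{p'} \le A^{p'} - B^{p'}$ to see that the difference of the two singular kernels loses precisely the expected amount of singularity, producing the sharp modulus $|y-x|^{s - 1/p}$ rather than a worse one. Items (1), (2), (3) and (5) are, by contrast, essentially bookkeeping once the convolution representation, Lemma \ref{lem:weakpq}, Lemma \ref{lem:embed_L_p_weak}, and the one-dimensional Hardy--Littlewood--Sobolev inequality are in hand; the only small point to verify there is that in item (3) the endpoint exponent $r = \frac{p}{1-sp}$ is genuinely attained (it is, being the HLS target) while $r < \frac{p}{1-sp}$ uses only the finiteness of $\Leb{1}(I)$.
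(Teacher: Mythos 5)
Your proposal is correct, and it is genuinely more self-contained than what the paper does: the paper proves only item (2) (as a corollary of Lemmas \ref{lem:weakpq} and \ref{lem:embed_L_p_weak}) and delegates items (1) and (3)--(6) wholesale to \cite[Section 3]{MR1347689} and \cite[Theorem 4]{MR1544927}, whereas you reconstruct all of them. Your treatment of (1) via the convolution representation \eqref{eq:fract_int_conv_equiv} truncated to $K\chi_{(0,b-a)}$ plus Young's inequality is essentially the argument the authors sketch in Remark \ref{rem:constant_Riesz_Thorin} (generalized Minkowski / Riesz--Thorin), and it recovers the same $p$-uniform constant $\frac{(b-a)^{s}}{\Gamma(s+1)}$. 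Your route to (3) and (5) --- pointwise domination of $I^{s}_{a+}[u]$ by the Riesz potential $I^{s}(|\tilde{u}|H_{a})$, then the one-dimensional Hardy--Littlewood--Sobolev theorem, then the finite-measure embedding --- is exactly the transfer mechanism the paper itself uses for the weak-type bound in Lemma \ref{lem:weakpq}, so it fits naturally; the endpoint $r=\frac{p}{1-sp}$ is indeed attained and the limiting choice of $p$ near $\frac{1}{s}$ in (5) works since $\frac{p}{1-sp}\to\infty$. Items (4) and (6) are the classical Hardy--Littlewood computation, and your exponent bookkeeping is right: $(1-s)p'<1$ is equivalent to $sp>1$, the superadditivity $(A-B)^{p'}\le A^{p'}-B^{p'}$ for $A\ge B\ge 0$ and $p'\ge 1$ is valid, the discarded term $(x-a)^{(s-1)p'+1}-(y-a)^{(s-1)p'+1}$ is indeed nonpositive because the exponent $(s-1)p'+1$ is positive, and both pieces of the splitting contribute the sharp modulus $|y-x|^{s-\frac{1}{p}}$ (respectively $|y-x|^{s}$ in (6)). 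What your approach buys is a proof readable without the external references; what the paper's citation buys is brevity and, implicitly, the sharpness discussion at the endpoints $p=1$ and $p=\frac{1}{s}$, which the authors postpone to the counterexamples at the end of Section \ref{sec:Sob_type_emb}.
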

\begin{proof}
Point (2) is a straightforward consequence of Lemmas \ref{lem:weakpq} and \ref{lem:embed_L_p_weak}, while the other points follow from \cite[Section 3]{MR1347689} and \cite[Theorem 4]{MR1544927}.
\end{proof}

\begin{remark} \label{rem:constant_Riesz_Thorin}
	We notice that point $(i)$ of Proposition \ref{prop:cont_BLNT} is a consequence of a generalized Minkowski inequality, as observed in the proof of \cite[Theorem 2.6.]{MR1347689}. Alternatively, we may use the fact that $I^s_{a+}$ is bounded from $L^1(I)$ into $L^1(I)$, thanks to \cref{rem:frac_int_well_posed}, and from $L^\infty(I)$ to $L^\infty(I)$, thanks to the trivial estimate 
	\begin{equation*}|I^s_{a+}[u](x)|\le\frac{(b-a)^s}{\Gamma(s+1)}\left\|u\right\|_{L^\infty(I)} \text{ for } \Leb{1}\text{-a.e. } x\in I,
	\end{equation*}
so that we can apply the Riesz-Thorin Theorem \cite[Theorem 1.3.4]{G14-C} to get the continuity from $L^p(I)$ into $L^p(I)$ for all $1<p<\infty$. In particular, since the constants of continuity from $L^1(I)$ into $L^{1}(I)$ and from $L^\infty(I)$ into $L^{\infty}(I)$ coincide, the Riesz-Thorin Theorem gives a bound for the constant of continuity $C_{s, p}$ from $L^p(I)$ into $L^{p}(I)$, namely,
	$$
	C_{s,p}\le \frac{(b-a)^s}{\Gamma(s+1)} \text{ for all } p \in [1, \infty].
	$$
This shows that $C_{s,p}$ is uniformly bounded in $p \in [1, \infty]$.
\end{remark}

\begin{corollary}\label{cor:bvintoholder}
	Let $s\in(0,1)$. The fractional integral $I^s_{a+}$ is a continuous operator from $BV(I)$ into $C^{0,s}(\overline{I})$.
	\begin{proof}
		The statement follows by combining Lemma \ref{eq:BV_bounded} and the last point of Proposition \ref{prop:cont_BLNT}.
		\end{proof}
\end{corollary}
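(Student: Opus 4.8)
The plan is to obtain the statement by composing two continuous linear maps already established in the excerpt, so that the corollary becomes essentially immediate. First I would invoke Lemma~\ref{result:BV_bounded}, which gives the continuous inclusion $BV(I)\emb L^{\infty}(I)$ together with the explicit bound $\|u\|_{L^{\infty}(I)}\le\max\{1,(b-a)^{-1}\}\|u\|_{BV(I)}$ for all $u\in BV(I)$. Second, I would apply point~(6) of Proposition~\ref{prop:cont_BLNT}, according to which $I^{s}_{a+}$ is a continuous operator from $L^{\infty}(I)$ into $C^{0,s}(\overline{I})$; denote by $C_{s}=C_{s}(a,b)>0$ the corresponding operator norm.

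Composing, for every $u\in BV(I)$ one gets $I^{s}_{a+}[u]\in C^{0,s}(\overline{I})$ with
\begin{equation*}
\|I^{s}_{a+}[u]\|_{C^{0,s}(\overline{I})}\le C_{s}\,\|u\|_{L^{\infty}(I)}\le C_{s}\max\Bigl\{1,\tfrac{1}{b-a}\Bigr\}\,\|u\|_{BV(I)},
\end{equation*}
which is precisely the asserted continuity of $I^{s}_{a+}\colon BV(I)\to C^{0,s}(\overline{I})$. Linearity of $I^{s}_{a+}$ is clear from its integral definition, so boundedness is equivalent to continuity.

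Since the argument is merely a composition of two bounded maps, there is no real obstacle. The only point deserving a line of care is that elements of $BV(I)$ are defined only up to $\Leb{1}$-null sets: one should note that $I^{s}_{a+}[u]$ depends only on the $L^{1}$-class of $u$ (cf.\ Remark~\ref{rem:frac_int_well_posed}), hence the estimate through $\|u\|_{BV(I)}$ is well posed, and the $C^{0,s}(\overline{I})$ representative on the right-hand side is the canonical (continuous, and in fact everywhere-defined) one produced by point~(6) of Proposition~\ref{prop:cont_BLNT}.
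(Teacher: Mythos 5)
Your proof is correct and follows exactly the paper's argument: compose the continuous embedding $BV(I)\emb L^{\infty}(I)$ from Lemma \ref{result:BV_bounded} with the continuity of $I^{s}_{a+}$ from $L^{\infty}(I)$ into $C^{0,s}(\overline{I})$ given by the last point of Proposition \ref{prop:cont_BLNT}. The explicit operator-norm estimate and the remark on representatives are welcome but not essential additions.
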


 \begin{proposition}[Continuity properties of the fractional integral in H{\"o}lder spaces]\label{prop:holcontinuity}
	Let $s\in(0,1)$ and $\alpha\in\left(0,1\right]$. The fractional integral $I^{s}_{a+}$ is a continuous operator 
	\begin{enumerate}
		\item from $C^{0,\alpha}_{a}(\overline{I})$ onto $C^{0,\alpha+s}_{a}(\overline{I})$ if $\alpha+s<1$,
		\item from $C^{0,\alpha}_{a}(\overline{I})$ onto $H^{1,1}_{a}(\overline{I})$ if $\alpha+s=1$,
		\item from $C^{0,\alpha}_{a}(\overline{I})$ onto $C^{1,\alpha+s-1}_{a}(\overline{I})$ if $\alpha+s>1$,
	\end{enumerate}
where $H^{1,1}_{a}(\overline{I})$ is the space of functions $f \in C^{0}(\overline{I})$ that satisfy $f(a) = 0$ and admit $\omega(h)=|h||\log|h||$ as a local modulus of continuity; namely, for which there exists $C>0$ such that
$$
|f(x+h)-f(x)|\leq C|h||\log|h|| \text{ for all } h \in (a - x, b - x) \setminus \{0\} \text{ and } x \in \overline{I}.
$$
\end{proposition}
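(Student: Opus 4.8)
The plan is to analyze $f := I^{s}_{a+}[\phi]$ for $\phi \in C^{0,\alpha}_{a}(\overline{I})$, reducing all three cases to a single family of increment estimates, in the spirit of the classical scheme of \cite[Section 3]{MR1347689}. Write $[\phi] := [\phi]_{C^{0,\alpha}(\overline{I})}$. The first ingredient is the behaviour near the left endpoint: since $\phi(a) = 0$ we have $|\phi(t)| \le [\phi](t-a)^{\alpha}$, and using $\int_{a}^{x}(t-a)^{\alpha}(x-t)^{s-1}\,dt = \frac{\Gamma(\alpha+1)\Gamma(s)}{\Gamma(\alpha+s+1)}(x-a)^{\alpha+s}$ this yields
\[
|f(x)| \le \frac{\Gamma(\alpha+1)}{\Gamma(\alpha+s+1)}\,[\phi]\,(x-a)^{\alpha+s} \qquad (x \in \overline{I}).
\]
In particular $f(a) = 0$, and whenever $a \le x_{1} < x_{2} \le b$ with $h := x_{2} - x_{1} \ge x_{1} - a$ one gets $|f(x_{2}) - f(x_{1})| \le C(\alpha,s)\,[\phi]\,h^{\alpha+s}$ at once. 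This settles the ``near the endpoint'' regime.

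For the ``far from the endpoint'' regime $h < x_{1} - a$ I would insert $\phi(t) = \phi(x_{1}) + (\phi(t) - \phi(x_{1}))$ into
\[
\Gamma(s)\bigl(f(x_{2}) - f(x_{1})\bigr) = \int_{a}^{x_{1}} \phi(t)\bigl[(x_{2}-t)^{s-1} - (x_{1}-t)^{s-1}\bigr]\,dt + \int_{x_{1}}^{x_{2}} \phi(t)(x_{2}-t)^{s-1}\,dt .
\]
The two contributions of the constant $\phi(x_{1})$ are $\frac{\phi(x_{1})}{s}\bigl[(x_{2}-a)^{s} - h^{s} - (x_{1}-a)^{s}\bigr]$ from the first integral and $\frac{\phi(x_{1})}{s}h^{s}$ from the second; the singular terms $\mp h^{s}$ cancel, leaving $\frac{\phi(x_{1})}{s}\bigl[(x_{2}-a)^{s} - (x_{1}-a)^{s}\bigr]$, which by the mean value theorem and $x_{1}-a > h$ is $O\bigl([\phi]\,(x_{1}-a)^{\alpha+s-1}h\bigr)$. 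The two remaining terms are $\int_{x_{1}}^{x_{2}}(\phi(t)-\phi(x_{1}))(x_{2}-t)^{s-1}\,dt$, bounded directly by $\frac{\Gamma(\alpha+1)\Gamma(s)}{\Gamma(\alpha+s+1)}[\phi]\,h^{\alpha+s}$, and $\int_{a}^{x_{1}}(\phi(t)-\phi(x_{1}))\bigl[(x_{2}-t)^{s-1} - (x_{1}-t)^{s-1}\bigr]\,dt$, which after the substitution $u = x_{1}-t$ is controlled by $[\phi]\int_{0}^{x_{1}-a} u^{\alpha}\bigl(u^{s-1} - (u+h)^{s-1}\bigr)\,du$, to be split at $u = h$ via $u^{s-1} - (u+h)^{s-1} \le \min\{u^{s-1},\,(1-s)h\,u^{s-2}\}$.

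When $\alpha + s < 1$, every power appearing above is a negative power of a quantity $\ge h$ (after the elementary integrations), so each term is $\le C(\alpha,s,b-a)[\phi]\,h^{\alpha+s}$; combined with the near-endpoint estimate and with $\|f\|_{\infty} \le C[\phi](b-a)^{\alpha+s}$, this yields $\|f\|_{C^{0,\alpha+s}(\overline{I})} \le C\|\phi\|_{C^{0,\alpha}(\overline{I})}$, hence the continuity of $I^{s}_{a+}\colon C^{0,\alpha}_{a}(\overline{I}) \to C^{0,\alpha+s}_{a}(\overline{I})$. When $\alpha + s = 1$ the same computation goes through verbatim, except that $\int_{h}^{x_{1}-a} u^{\alpha+s-2}\,du = \int_{h}^{x_{1}-a} u^{-1}\,du = \log\frac{x_{1}-a}{h}$ produces a logarithmic factor, so the increment is bounded by $C[\phi]\,|h|\,|\log|h||$ for small $|h|$, i.e.\ $f \in H^{1,1}_{a}(\overline{I})$. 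When $\alpha + s > 1$, the condition $\alpha > 1 - s$ is precisely what makes
\[
D^{1-s}_{a+}[\phi](x) = \frac{1}{\Gamma(s)}\Bigl[\phi(x)(x-a)^{s-1} + (s-1)\int_{a}^{x}(\phi(t)-\phi(x))(x-t)^{s-2}\,dt\Bigr]
\]
absolutely convergent; dividing the increment identity above by $h$ and letting $h\to 0^{+}$ (the dominating function $(1-s)[\phi](x-t)^{\alpha+s-2}$ is integrable exactly when $\alpha+s>1$) shows that $f \in C^{1}(\overline{I})$ with $f' = D^{1-s}_{a+}[\phi]$, while $|\phi(x)| \le [\phi](x-a)^{\alpha}$ forces $f'(a) = 0$ and $|f'(x)| \le C[\phi](x-a)^{\alpha+s-1}$; the bound $[f']_{C^{0,\alpha+s-1}(\overline{I})} \le C[\phi]$ then follows from the same near/far dichotomy applied now to the kernel $(x-t)^{s-2}$ and the exponent $\alpha+s-1 \in (0,1)$, giving continuity into $C^{1,\alpha+s-1}_{a}(\overline{I})$.

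Surjectivity in each case follows from the inversion formula $I^{s}_{a+}\bigl[D^{s}_{a+}[v]\bigr] = v$, which is applicable because for $v$ in the target space $I^{1-s}_{a+}[v] \in AC(\overline{I})$ vanishes at $a$ (by the cases already proved, applied to $I^{1-s}_{a+}$, whose exponent satisfies $(1-s)+(\alpha+s)=1+\alpha>1$): then $D^{s}_{a+}[v] = (I^{1-s}_{a+}[v])'$ is a well-defined element of $C^{0,\alpha}_{a}(\overline{I})$ by those same mapping properties, and it is a preimage of $v$. The main obstacle is the far-from-endpoint estimate: using only $|\phi(x_{2}-t) - \phi(x_{1}-t)| \le [\phi]\,h^{\alpha}$ one recovers Hölder regularity of order $\alpha$ at best (and the constant term only of order $s$), and the full gain of $s$ derivatives emerges only after the exact cancellation of the $\pm h^{s}$ terms and the splitting of the residual integrals at scale $h$ — which is also exactly where the logarithm in the borderline case $\alpha+s=1$ is produced.
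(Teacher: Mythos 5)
Your argument for the continuity (the ``into'' half of each statement) is correct and essentially self-contained, which is a genuinely different route from the paper's: the paper proves nothing here and simply refers to \cite[Theorem 3.1 and Lemma 13.1]{MR1347689} and to \cite[Theorem 13.17]{MR1347689} for surjectivity. What you have written is a faithful reconstruction of that classical argument --- the endpoint bound $|f(x)|\le C[\phi](x-a)^{\alpha+s}$, the near/far dichotomy at scale $h = x_{1}-a$, the exact cancellation of the $\pm h^{s}$ contributions of the constant $\phi(x_{1})$, and the splitting of $\int_{0}^{x_{1}-a}u^{\alpha}\bigl(u^{s-1}-(u+h)^{s-1}\bigr)\,du$ at $u=h$ --- and it correctly produces the logarithm in the borderline case and the identification $f'=D^{1-s}_{a+}[\phi]$ in Marchaud form by dominated convergence when $\alpha+s>1$. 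This buys the reader an actual proof where the paper offers only a citation.

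The gap is in the surjectivity. The inversion scheme $\phi:=D^{s}_{a+}[v]$, $I^{s}_{a+}[\phi]=v$ is the right idea, but the exponent count ``$(1-s)+(\alpha+s)=1+\alpha>1$'' presupposes that $v$ lies in $C^{0,\alpha+s}_{a}(\overline{I})$, which is true only in case (1); there the argument does close, since case (3) applied to $I^{1-s}_{a+}$ gives $I^{1-s}_{a+}[v]\in C^{1,\alpha}_{a}(\overline{I})$ and hence $D^{s}_{a+}[v]\in C^{0,\alpha}_{a}(\overline{I})$. In case (3) the target is $C^{1,\alpha+s-1}_{a}(\overline{I})$, and applying your mapping properties to $v$ itself (which is at best Lipschitz as a $C^{0}$-object) yields only $D^{s}_{a+}[v]\in C^{0,1-s}_{a}(\overline{I})$, strictly weaker than the required $C^{0,\alpha}_{a}(\overline{I})$ when $\alpha>1-s$; the correct route is $D^{s}_{a+}[v]=I^{1-s}_{a+}[v']$ with $v'\in C^{0,\alpha+s-1}_{a}(\overline{I})$ followed by case (1) applied to $v'$, and even this leaves the endpoint $\alpha=1$ unresolved because then $(\alpha+s-1)+(1-s)=1$ and one lands only in $H^{1,1}_{a}(\overline{I})$. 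In case (2) the target $H^{1,1}_{a}(\overline{I})$ is not one of the domain spaces of the proposition, so ``the cases already proved'' cannot be invoked at all; moreover a function in $H^{1,1}_{a}(\overline{I})$ need not be absolutely continuous, so the representation $D^{s}_{a+}[v]=I^{1-s}_{a+}[v']$ is unavailable, and a direct Marchaud estimate with modulus $\omega(t)=t|\log t|$ produces for $D^{s}_{a+}[v]$ a modulus of order $h^{1-s}|\log h|$ rather than $h^{1-s}$. This is precisely the part the paper outsources to \cite[Theorem 13.17]{MR1347689}, which operates with generalized moduli of continuity in the Zygmund--Bari--Stechkin framework; without that machinery (or an equivalent substitute) your surjectivity claims in cases (2) and (3) are not established.
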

\begin{proof}
	We refer to \cite[Theorem 3.1]{MR1347689}, \cite[Lemma 13.1]{MR1347689} and the subsequent remark therein for the proof of the continuity of $I^s_{a+}$ in the cases $\alpha+s\le 1$ and $\alpha+s>1$ respectively. Then, we have that the operator $I_{a+}^{s}$ is also onto by \cite[Theorem 13.17]{MR1347689}, which actually holds for functions with a more general modulus of continuity and that vanish in the endpoint $a$.
	\end{proof}

We recall now the semigroup law, one of the most useful property of the fractional integrals, for which we refer to \cite[Section 2.3, formula (2.21)]{MR1347689} (see also \cite[Theorem 2.5]{MR1347689}).

\begin{lemma}[Semigroup law]
	\label{lem:semigrouplaw}
	Let $\alpha,\beta\in (0,1)$ such that $\alpha+\beta\le 1$ and $u\in L^1(I)$. Then, we have
	$$
	I^\alpha_{a+}[I^\beta_{a+}[u]]=I^{\alpha+\beta}_{a+}[u],
	$$
	where $\displaystyle I^1_{a+}[u](x):=\int_a^x u(t)dt$.
\end{lemma}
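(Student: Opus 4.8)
The plan is to expand the left-hand side as an iterated integral, interchange the order of integration by Fubini's theorem, and recognize the resulting inner integral as an Euler Beta function. Writing out the definitions,
\begin{equation*}
I^\alpha_{a+}[I^\beta_{a+}[u]](x) = \frac{1}{\Gamma(\alpha)\Gamma(\beta)} \int_a^x (x - \tau)^{\alpha - 1} \left ( \int_a^\tau (\tau - t)^{\beta - 1} u(t) \, dt \right ) d\tau,
\end{equation*}
which is meaningful for $\Leb{1}$-a.e. $x \in I$ because $I^\beta_{a+}[u] \in L^1(I)$ by Remark \ref{rem:frac_int_well_posed}, so that $I^\alpha_{a+}$ applied to it is again a well-defined $L^1$ function. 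When $\alpha + \beta = 1$ one reads $\Gamma(1) = 1$ and interprets $I^1_{a+}$ as the ordinary primitive, and the computation below applies verbatim.

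Before interchanging the integrals I would verify absolute integrability on the triangle $\{\, a < t < \tau < x \,\}$. Using Tonelli's theorem together with the substitution $\tau = t + (x - t)\sigma$, $\sigma \in (0, 1)$, one gets
\begin{equation*}
\int_a^x \int_t^x (x - \tau)^{\alpha - 1} (\tau - t)^{\beta - 1} |u(t)| \, d\tau \, dt = B(\alpha, \beta) \int_a^x (x - t)^{\alpha + \beta - 1} |u(t)| \, dt,
\end{equation*}
where $B(\alpha, \beta) = \int_0^1 (1 - \sigma)^{\alpha - 1} \sigma^{\beta - 1} \, d\sigma < \infty$ since $\alpha, \beta > 0$. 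Integrating the right-hand side once more over $x \in I$ and applying Tonelli again produces the bound $\tfrac{B(\alpha,\beta)(b - a)^{\alpha + \beta}}{\alpha + \beta} \|u\|_{L^1(I)} < \infty$; consequently $\int_a^x (x - t)^{\alpha + \beta - 1} |u(t)| \, dt < \infty$ for $\Leb{1}$-a.e. $x \in I$, and for each such $x$ the double integral displayed above is finite.

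With absolute integrability established, Fubini's theorem permits exchanging the order of integration for a.e. $x$, giving
\begin{equation*}
I^\alpha_{a+}[I^\beta_{a+}[u]](x) = \frac{1}{\Gamma(\alpha)\Gamma(\beta)} \int_a^x u(t) \left ( \int_t^x (x - \tau)^{\alpha - 1}(\tau - t)^{\beta - 1} \, d\tau \right ) dt.
\end{equation*}
The inner integral equals $(x - t)^{\alpha + \beta - 1} B(\alpha, \beta)$ by the same substitution as before, and the classical identity $B(\alpha, \beta) = \Gamma(\alpha)\Gamma(\beta)/\Gamma(\alpha + \beta)$ (which reads $\Gamma(\alpha)\Gamma(1 - \alpha)$ when $\alpha + \beta = 1$) yields
\begin{equation*}
I^\alpha_{a+}[I^\beta_{a+}[u]](x) = \frac{1}{\Gamma(\alpha + \beta)} \int_a^x (x - t)^{\alpha + \beta - 1} u(t) \, dt = I^{\alpha + \beta}_{a+}[u](x)
\end{equation*}
for $\Leb{1}$-a.e. $x \in I$, which is the assertion. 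The only genuinely delicate step is the Fubini justification, i.e. the almost everywhere finiteness of the triangular integral of $|u|$; the remainder is the explicit Beta-integral computation.
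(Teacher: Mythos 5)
Your proof is correct: the paper itself does not prove this lemma but defers to \cite[Section 2.3, formula (2.21)]{MR1347689}, where the argument is precisely the one you give (the Dirichlet interchange formula justified by Tonelli--Fubini, followed by the Euler Beta integral $B(\alpha,\beta)=\Gamma(\alpha)\Gamma(\beta)/\Gamma(\alpha+\beta)$). Your explicit verification of the a.e.\ finiteness of the triangular integral of $|u|$, in the spirit of Remark \ref{rem:frac_int_well_posed}, is exactly the point that makes the formal computation rigorous for general $u\in L^1(I)$.
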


\begin{remark}
	\label{rem:reflection}
As stated in \cite[Section 2.3]{MR1347689}, the operators $I^s_{a+}$ and  $I^s_{b-}$ are related by a simple change of variable through the following formula
	$$
	I^s_{a+}[u](Q(x))=I^s_{b-}[u_Q(\cdot)](x),
	$$
	where $Q(x):=b+a-x$ and $u_Q(\cdot):=u(Q(\cdot))$.
\end{remark}

We recall now a simple duality relation between $I_{a+}^{s}$ and $I_{b-}^{s}$, which shall prove to be useful in the sequel.

\begin{lemma} \label{result:duality_fract_int}
Let $u, v \in L^{1}(I)$ and $s \in (0, 1)$. Then we have
\begin{equation} \label{eq:duality_fract_int}
\int_{a}^{b} I^{s}_{a+}[u](x) \, v(x) \, dx = \int_{a}^{b} u(x) \, I_{b-}^{s}[v](x) \, dx.
\end{equation}
\end{lemma}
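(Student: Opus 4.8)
The plan is to prove \eqref{eq:duality_fract_int} by unwinding the definitions of $I^{s}_{a+}$ and $I^{s}_{b-}$ from Definition \ref{def:fract_int} and exchanging the order of integration over the triangle $T := \{(t,x)\in I\times I : t<x\}$ by Fubini's theorem. Indeed,
\begin{equation*}
\int_{a}^{b} I^{s}_{a+}[u](x)\, v(x)\, dx = \frac{1}{\Gamma(s)}\int_{a}^{b}\left(\int_{a}^{x}\frac{u(t)}{(x-t)^{1-s}}\, dt\right)v(x)\, dx = \frac{1}{\Gamma(s)}\iint_{T}\frac{u(t)\, v(x)}{(x-t)^{1-s}}\, dt\, dx,
\end{equation*}
and symmetrically $\int_{a}^{b} u(t)\, I^{s}_{b-}[v](t)\, dt$ is the iterated integral of the same function $(t,x)\mapsto u(t)v(x)(x-t)^{-(1-s)}\chi_{T}(t,x)$ over $T$ in the reversed order; so \eqref{eq:duality_fract_int} is exactly the conclusion of Fubini's theorem applied to this function.

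The only point needing care is the absolute integrability required to invoke Fubini, and this is where I would concentrate the argument. I would first apply Tonelli's theorem to the nonnegative function obtained by replacing $u,v$ with $|u|,|v|$, getting
\begin{equation*}
\frac{1}{\Gamma(s)}\iint_{T}\frac{|u(t)|\,|v(x)|}{(x-t)^{1-s}}\, dt\, dx = \int_{a}^{b} I^{s}_{a+}[|u|](x)\,|v(x)|\, dx = \int_{a}^{b}|u(t)|\, I^{s}_{b-}[|v|](t)\, dt,
\end{equation*}
all three being equal elements of $[0,+\infty]$. When this common value is finite, Fubini applies to the signed integrand and \eqref{eq:duality_fract_int} follows at once. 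Finiteness holds automatically as soon as one of $u,v$ lies in $L^{\infty}(I)$: for instance, if $v\in L^{\infty}(I)$ the middle quantity is bounded by $\|v\|_{L^{\infty}(I)}\|I^{s}_{a+}[|u|]\|_{L^{1}(I)}\le \|v\|_{L^{\infty}(I)}\frac{(b-a)^{s}}{\Gamma(s+1)}\|u\|_{L^{1}(I)}$ by Remark \ref{rem:frac_int_well_posed}, and symmetrically if $u\in L^{\infty}(I)$ via the $L^{1}$-bound for $I^{s}_{b-}$; this covers every application of the lemma in the sequel, where one of the two functions is a test function. In general, one reads the statement under the standing understanding that the integrals in \eqref{eq:duality_fract_int} converge, the Tonelli computation above supplying both the well-posedness and the equality.

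As a variant I could instead run the argument through the convolution representation $I^{s}_{a+}[\tilde u] = (\tilde u H_{a})\ast(KH)$ from \eqref{eq:fract_int_conv_equiv} (and its counterpart $I^{s}_{b-}[\tilde v]=(\tilde v H_{b})\ast(KH_{-})$ from the remark after Lemma \ref{lem:weakpq}), exploiting that the kernel $K(y)=\frac{1}{\Gamma(s)|y|^{1-s}}$ is even, so that convolution by $KH$ is formally adjoint to convolution by $KH_{-}$; the integrability bookkeeping is identical. I expect the interchange of integration order itself to be entirely routine; the one genuine obstacle is the integrability justification just described, i.e.\ pinning down the hypotheses under which both sides of \eqref{eq:duality_fract_int} are simultaneously well defined.
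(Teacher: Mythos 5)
Your proof is correct and follows the same route as the paper's: a single application of Fubini's theorem to interchange the order of integration over the triangle $\{(t,x): t<x\}$, after unwinding the definitions of $I^{s}_{a+}$ and $I^{s}_{b-}$. The paper's own proof is exactly that computation with no integrability discussion, so your Tonelli argument --- and in particular your observation that for arbitrary $u,v\in L^{1}(I)$ the common value may be $+\infty$ unless one factor is essentially bounded, as it is in every application of the lemma --- is actually more careful than the original rather than a departure from it.
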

\begin{proof}
By Fubini's Theorem, we have
\begin{align*}
\int_{a}^{b} I_{a+}^{s} [u](x) \, v(x) \, dx & = \frac{1}{\Gamma\left(s\right)} \int_{a}^{b} \int_{a}^{x} \frac{u(t)}{(x - t)^{1 - s}} \, v(x) \, dt \, dx  \\
& =  \frac{1}{\Gamma\left( s\right)} \int_{a}^{b} \int_{t}^{b} \frac{v(x)}{(x - t)^{1 - s}} \, u(t) \, dx \, dt \\
& = \int_{a}^{b} u(t) \, I_{b-}^{s}[v](t) \, dt.
\end{align*}
\end{proof}

We conclude this section by recalling a well known result on the convergence of $I_{a+}^{s}$ to the identity operator as $s \to 0^{+}$. 

\begin{lemma} \label{lem:fractional_integral_s_0}
For any $u \in L^{1}(I)$ we have $\|I_{a+}^{s}[u] - u\|_{L^{1}(I)} \to 0$ as $s \to 0^{+}$. In particular, if $u \in C^1(\overline{I})$, then $I_{a+}^{s}[u](x) \to u(x)$ for all $x \in I$ and it holds that
\begin{equation}
\label{eq:regularvolterra}
I^s_{a+}[u](x)=\frac{u(a)}{\Gamma(s+1)}(x-a)^s+\frac{1}{\Gamma(s+1)}\int_a^x u'(t)(x-t)^sdt.
\end{equation}
Analogous statements hold for $I_{b-}^{s}$.
\begin{proof}
	We start by assuming that $u\in C^1(\overline{I})$, then, thanks to a simple integration by parts, equality \eqref{eq:regularvolterra} immediately follows.
	Thus, letting $s\to 0^+$ we immediately obtain pointwise convergence, and by Lebesgue's dominated convergence theorem we have convergence in $L^1(I)$.
	Otherwise, if $u\in L^1(I)$, for any $\varepsilon>0$ there exists $v\in C^1(\overline{I})$ such that $$\|v-u\|_{L^1(I)}\le\varepsilon.$$ Then, by Remark \ref{rem:frac_int_well_posed}, we have
	\begin{equation*}
	\begin{split}
	\|I^s_{a+}[u]-u\|_{L^1(I)}&\leq \|I^s_{a+}[u-v]\|_{L^1(I)}+\|I^s_{a+}[v]-v\|_{L^1(I)}+\|v-u\|_{L^1(I)} \\
	&\leq\max\left\{1,\frac{(b-a)^s}{\Gamma(s+1)}\right\}\|v-u\|_{L^1(I)}+\|I^s_{a+}[v]-v\|_{L^1(I)} \\
	&\le\max\left\{1,\frac{(b-a)^s}{\Gamma(s+1)}\right\}\varepsilon+\|I^s_{a+}[v]-v\|_{L^1(I)}.
	\end{split}
	\end{equation*}
This implies that
\begin{equation*}
\limsup_{s \to 0^{+}} \|I^s_{a+}[u]-u\|_{L^1(I)} \le \eps,
\end{equation*}
so that the claim is proved, since $\eps$ is arbitrary.
	\end{proof}
\end{lemma}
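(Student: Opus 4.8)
The plan is to establish the explicit formula \eqref{eq:regularvolterra} for $u\in C^1(\overline{I})$ by a single integration by parts, read off the pointwise limit from it, upgrade to $L^1$-convergence on the bounded interval by dominated convergence, and finally remove the regularity assumption by a density argument relying on the $L^1\to L^1$ bound for $I^s_{a+}$. First, for $u\in C^1(\overline{I})$ and $x\in I$, write $(x-t)^{s-1}=-\tfrac1s\tfrac{d}{dt}(x-t)^s$ in the defining integral $I^s_{a+}[u](x)=\tfrac{1}{\Gamma(s)}\int_a^x u(t)(x-t)^{s-1}\,dt$, which converges absolutely since $s-1>-1$. Integrating by parts on $[a,x-\delta]$ and letting $\delta\to0^+$: the boundary term at $t=x$ vanishes because $s>0$, the one at $t=a$ gives $\tfrac{u(a)}{s}(x-a)^s$, and the remaining term is $\tfrac1s\int_a^x u'(t)(x-t)^s\,dt$; dividing by $\Gamma(s)$ and using $s\Gamma(s)=\Gamma(s+1)$ yields \eqref{eq:regularvolterra}.

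From \eqref{eq:regularvolterra} the pointwise convergence is immediate: as $s\to0^+$ one has $\Gamma(s+1)\to\Gamma(1)=1$ and $(x-a)^s\to1$ for fixed $x\in I$, while $|u'(t)(x-t)^s|\le\|u'\|_{L^\infty(I)}\max\{1,b-a\}$ uniformly for $s\in(0,1)$, so dominated convergence gives $\int_a^x u'(t)(x-t)^s\,dt\to\int_a^x u'(t)\,dt=u(x)-u(a)$, whence $I^s_{a+}[u](x)\to u(a)+u(x)-u(a)=u(x)$ for every $x\in I$ (note that the convergence genuinely fails at $x=a$, which is why $I$ is an open interval). Moreover, by the trivial estimate recorded in Remark \ref{rem:constant_Riesz_Thorin} (or directly from \eqref{eq:regularvolterra}), $|I^s_{a+}[u](x)|\le\tfrac{(b-a)^s}{\Gamma(s+1)}\|u\|_{L^\infty(I)}$ is bounded uniformly in $x\in I$ and $s\in(0,1)$ by a constant; since $I$ is bounded, this constant is in $L^1(I)$, so a second application of dominated convergence yields $\|I^s_{a+}[u]-u\|_{L^1(I)}\to0$.

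For a general $u\in L^1(I)$, fix $\eps>0$ and pick $v\in C^1(\overline{I})$ with $\|v-u\|_{L^1(I)}\le\eps$. By linearity of $I^s_{a+}$, the triangle inequality, and the bound $\|I^s_{a+}[w]\|_{L^1(I)}\le\tfrac{(b-a)^s}{\Gamma(s+1)}\|w\|_{L^1(I)}$ from Remark \ref{rem:frac_int_well_posed}, one gets
\begin{equation*}
\|I^s_{a+}[u]-u\|_{L^1(I)}\le\Bigl(1+\tfrac{(b-a)^s}{\Gamma(s+1)}\Bigr)\|v-u\|_{L^1(I)}+\|I^s_{a+}[v]-v\|_{L^1(I)}.
\end{equation*}
Letting $s\to0^+$, the last term goes to $0$ by the previous paragraph and $\tfrac{(b-a)^s}{\Gamma(s+1)}\to1$, so $\limsup_{s\to0^+}\|I^s_{a+}[u]-u\|_{L^1(I)}\le2\eps$; since $\eps$ is arbitrary, $\|I^s_{a+}[u]-u\|_{L^1(I)}\to0$. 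The statements for $I^s_{b-}$ follow either by the change of variables of Remark \ref{rem:reflection} applied to the results just obtained, or by repeating the integration by parts verbatim, which now produces $I^s_{b-}[u](x)=\tfrac{u(b)}{\Gamma(s+1)}(b-x)^s-\tfrac{1}{\Gamma(s+1)}\int_x^b u'(t)(t-x)^s\,dt$ and, in the limit, $I^s_{b-}[u](x)\to u(b)-(u(b)-u(x))=u(x)$.

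I do not expect a genuine obstacle here: everything is elementary. The only point requiring a little care is that all dominating functions used in the two applications of dominated convergence, and the constants in the density estimate, are uniform in $s$ over $(0,1)$ (or at least over a right neighbourhood of $0$); this is guaranteed by the continuity and strict positivity of $\Gamma$ on $[1,2]$ together with the boundedness of $(x-a)^s$ and $(b-a)^s$ on $I$ for $s\in(0,1)$.
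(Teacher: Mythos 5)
Your proof is correct and follows essentially the same route as the paper's: integration by parts to get \eqref{eq:regularvolterra} for $C^1$ functions, pointwise convergence plus dominated convergence for the $L^1$ limit in the smooth case, and a density argument combined with the $L^1\to L^1$ bound of Remark \ref{rem:frac_int_well_posed} for general $u\in L^1(I)$. The only (immaterial) difference is that your triangle-inequality constant yields $\limsup\le 2\eps$ rather than $\eps$, which changes nothing since $\eps$ is arbitrary.
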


\subsection{Fractional derivatives}

\begin{definition}[Riemann-Liouville fractional derivatives] \label{def:fract_der_RL}
Let $s \in (0, 1)$. For any $u: I \to \R$ sufficiently smooth, so that $I^{1 - s}_{a+}[u]$ and $I^{1 - s}_{b-}[u]$ are differentiable, we define the {\em left and right Riemann-Liouville $s$-fractional derivatives} of $u$ as
\begin{equation} \label{eq:frac_der_RL_1}
D^s_{a+}\left[u\right]\left(x\right):=\frac{d}{dx}I^{1-s}_{a+}\left[u\right]\left(x\right),
\end{equation}
and
\begin{equation} \label{eq:frac_der_RL_2}
D^s_{b-}\left[u\right]\left(x\right):=-\frac{d}{dx}I^{1-s}_{b-}\left[u\right]\left(x\right).
\end{equation}
\end{definition}

\begin{remark}
	As a consequence of the preceding Proposition \ref{prop:holcontinuity}, for $0<s<\alpha<1$ and $u\in C^{0,\alpha}_{a}(\overline{I})$, we have $I^{1-s}_{a+}[u]\in C^{1,\alpha-s}_{a}(\overline{I})$. Therefore, $\alpha$-H\"older continuity with $\alpha>s$ is a sufficient condition to ensure the existence of \eqref{eq:frac_der_RL_1} and \eqref{eq:frac_der_RL_2}.
\end{remark}

If one applies the Riemann-Liouville fractional integrals to the first derivative $u'$, whenever this operation makes sense, one has the following alternative definitions of left and right fractional derivatives.

\begin{definition}[Caputo fractional derivatives]
	Let $s\in(0,1)$. For any $u \in C^{1}(\overline{I})$ we define the {\em left and right Caputo $s$-fractional derivatives} of $u$ as
	\begin{equation}\label{eq:caputo}
	{}^C D^s_{a+}[u](x):=I^{1-s}_{a+}[u'](x)=\frac{1}{\Gamma(1-s)}\int_a^x \frac{u'(t)}{(x-t)^s}dt.
	\end{equation}
	\begin{equation}\label{eq:caputor}
	{}^C D^s_{b-}[u](x):=-I^{1-s}_{b-}[u'](x)=-\frac{1}{\Gamma(1-s)}\int_x^b \frac{u'(t)}{(t-x)^s}dt.
	\end{equation}
\end{definition}

We notice that the minimal functional spaces in which \eqref{eq:caputo} and \eqref{eq:caputor} are well defined are given by
\begin{equation}\begin{split}
	C^{1,s}_{a+}\,&:=\Big\{f:\overline{(a,+\infty)}\rightarrow\mathbb{R}
	: \; f\in AC\big(\overline{(a,t)}\big)\\&\qquad\quad \text{and}\quad
	\Theta_{s,f,t}\in L^1\big((a,t)\big),\; {\mbox{ for all }} t>a\Big\}.\end{split}
	\end{equation}
	and
\begin{equation}\begin{split}
C^{1,s}_{b-}\,&:=\Big\{f:\overline{(-\infty,b)}\rightarrow\mathbb{R} : \; f\in AC\big(\overline{(t,b)}\big)\\&\qquad\quad \text{and}\quad
\Psi_{s,f,t}\in L^1\big((t,b)\big),\; {\mbox{ for all }} t<b\Big\}.\end{split}
\end{equation}
where
\begin{equation*}(a,t)\ni\tau\,\mapsto\,\Theta_{s,f,t}(\tau):=
f^{'}(\tau)(t-\tau)^{-s}\end{equation*}
and
\begin{equation*}(t,b)\ni\tau\,\mapsto\,\Psi_{s,f,t}(\tau):=
f^{'}(\tau)(\tau-t)^{-s};\end{equation*}	
see {\it e.g.} \cite{CDV18} for more details about this fact. We remark that, in the notation of \cite{CDV18}, the space $C^{1,s}_{a+}$ is the space $C^{k,\beta}_a$ with $k=1$ and $\beta=s$, while the function $\Theta_{s,f,t}$ is $\Theta_{1,s,f,t}$.

For $u\in AC(\overline{I})$, a simple computation relates Riemann-Liouville and the Caputo fractional derivatives. Indeed, by formula \eqref{eq:regularvolterra} with $1-s$ in place of $s$ we have
\begin{align}
\label{eq:riluecap}
\frac{1}{\Gamma(1-s)}\int_{a}^{x} \frac{u(t)}{(x - t)^{s}} \, dt = \frac{u(a)}{\Gamma(2-s)}(x-a)^{1-s} + \frac{1}{\Gamma(2-s)} \int_{a}^{x} u'(t) (x - t)^{1 - s} \, dt.
\end{align}
Hence, differentiating in $x$ on both sides of \eqref{eq:riluecap} we obtain the following formula
\begin{equation}\label{eq:RLecap}
D^s_{a+}[u](x)={}^C D^s_{a+}u(x)+\frac{u(a)}{\Gamma(1-s)}(x-a)^{-s}.
\end{equation}
Analogously, for right derivatives we have
\begin{equation}\label{eq:RLecapr}
D^s_{b-}[u](x)={}^C D^s_{b-}u(x)+\frac{u(b)}{\Gamma(1-s)}(b-x)^{-s}.
\end{equation}
Therefore, Riemann-Liouville and Caputo fractional derivatives coincide for all $u\in AC(\overline{I})$ that vanish in the initial point $a$ for left derivatives, or in the final point $b$ for right derivatives.

We also notice that, if $u \in AC(\overline{I})$, we can exploit formula \eqref{eq:RLecap} to obtain another representation of the left Riemann-Liouville fractional derivative:
\begin{equation} \label{eq:Marchaud_equiv}
\begin{split}
D^s_{a+}[u](x)&=\frac{u(a)}{\Gamma(1-s)(x-a)^s}+\frac{1}{\Gamma(1-s)}\int_a^{x} \frac{u'(t)}{(x - t)^{s}}dt \\
&=\frac{u(a)}{\Gamma(1-s)(x-a)^s}+\frac{1}{\Gamma(1-s)}\int_a^{x}u'(t)\left(s\int_{x - t}^{x-a}\xi^{-s-1}d\xi+\frac{1}{(x-a)^s}\right)dt \\
& = \frac{u(x)}{\Gamma(1-s)(x-a)^s}+ \frac{s}{\Gamma(1-s)}\int_0^{x - a}\int_{x - \xi}^{x} u'(t) \xi^{-s-1} \, d t \, d \xi \\
& = \frac{u(x)}{\Gamma(1-s)(x-a)^s}+ \frac{s}{\Gamma(1-s)}\int_0^{x - a} \frac{u(x) - u(x - \xi)}{\xi^{s + 1}} \, d \xi \\
&=\frac{u(x)}{\Gamma(1-s)(x-a)^s}+\frac{s}{\Gamma(1-s)}\int_a^x\frac{u(x)-u(t)}{(x-t)^{s+1}}dt.
\end{split}
\end{equation}
This different representation formula of the left Riemann-Liouville fractional derivative is known as the \emph{left Marchaud fractional derivative}: 
$$
{}^M D^s_{a+}[u](x):=\frac{u(x)}{\Gamma(1-s)(x-a)^s}+\frac{s}{\Gamma(1-s)}\int_a^x\frac{u(x)-u(t)}{(x-t)^{s+1}}dt;
$$
For a precise treatment of this fractional differential operator, we refer to \cite{ferrari} and \cite{MR1347689}. \medskip

Now, we recall the notion of $L^p$-representability. 

\begin{definition}
	Let $1\le q\le \infty$ and $u\in L^q(I)$. We say that $u$ is $L^p$-representable if $u\in I_{a+}^s(L^p(I))$ or $u\in I^s_{b-}(L^p(I))$ for some $1\le p\le q$ and $s\in(0,1)$.
\end{definition}

From Proposition \ref{prop:cont_BLNT}, we see that, for all $1\le p\le \infty$ and $s\in(0,1)$, we have 
\begin{equation*}
I_{a+}^s(L^p(I)) \emb L^p(I).
\end{equation*} 
However, the above definition is nontrivial since the inclusion is strict, as it is shown by the following example.

\begin{example} \label{ex:critical_power}
Consider $$u(x):=\frac{(x-a)^{s-1}}{\Gamma(s)}$$ for some $s\in(0,1)$. Then we have $u\in L^p(I)$ for all $1\le p< \frac{1}{1-s}$, and, for all $x \in I$, we see that 
\begin{align}
I^{1-s}_{a+}[u](x) & =\frac{1}{\Gamma(1-s)\Gamma(s)}\int_a^x (t-a)^{s-1}(x-t)^{-s} \, dt = \frac{1}{\Gamma(1-s)\Gamma(s)} \int_0^1\sigma^{s-1}(1-\sigma)^{-s}d\sigma \nonumber \\
& = \frac{\beta(s, 1 - s)}{\Gamma(1-s)\Gamma(s)} = 1,  \label{eq:frac_int_critical_power}
\end{align}
by the properties of {\em Euler's beta function} $\beta$ (or {\em Euler's first integral}, see \cite{A64}). Therefore, we conclude that
\begin{equation} \label{eq:frac_der_critical_power}
D^{s}_{a+}[u](x) = 0 \ \text{ for all } \ x \in I,
\end{equation}
while the left Caputo $s$-fractional derivative is not well defined.
We prove now that the equation
\begin{equation}
\label{eq:norepresent}
I^s_{a+}[f]=u
\end{equation}
has no solution in $L^p(I)$. In fact, suppose by contradiction that there exists $f \in L^{p}(I)$ satisfying \eqref{eq:norepresent}. If we apply the $(1-s)$-fractional integral on both sides of \eqref{eq:norepresent}, thanks to Lemma \ref{lem:semigrouplaw} and \eqref{eq:frac_int_critical_power}, we get
\begin{equation*}
 \int_{a}^{x} f(t) \, dt = I^1_{a+}[f](x) = I^{1 - s}_{a+}[I^{s}_{a+}[f]](x) =I^{1-s}_{a+}[u](x)=1,
\end{equation*}
for all $x\in I$. Therefore, differentiating on both sides of the equation, we obtain $f=0$, which is clearly a contradiction. 
\end{example}

The next lemma gives a characterization of $L^p$-representability. We are going to state and prove it only in the case of left fractional integral, the other case being analogous.

\begin{lemma}[$L^p$-representability criterion]
	\label{lem:representcriteri}
	Let $q \in [1, \infty]$, $u\in L^q(I)$, $s\in(0,1)$ and $p \in [1, q]$. We have that $u\in I^s_{a+}(L^p(I))$ if and only if $I^{1-s}_{a+}[u]\in W^{1,p}(I)$ and $I^{1-s}_{a+}[u](a)=0$.
	\begin{proof}
		If $u\in I^s_{a+}(L^p(I))$, then $u=I^s_{a+}[f]$ for some $f\in L^p(I)$; therefore, using Lemma \ref{lem:semigrouplaw}, we get
		$$
		I^{1-s}_{a+}[u](x)=I^{1-s}_{a+}[I^s_{a+}[f]](x)=I^1_{a+}[f](x)=\int_a^x f(t)dt\in W^{1,p}(I),
		$$
		and $I^{1-s}_{a+}[u](a)=I^1_{a+}[f](a)=0$.
		On the other hand, if $I^{1-s}_{a+}[u]\in W^{1,p}(I)$ and $I^{1-s}_{a+}[u](a)=0$, then $I^{1-s}_{a+}[u]$ admits an absolutely continuous representative, its pointwise derivative $D^{s}_{a+}[u]$ is well defined $\Leb{1}$-a.e. and satisfies
		\begin{equation}
		\label{eq:rafeq}
		I^{1-s}_{a+}[u](x)=\int_a^x D^s_{a+}[u](t)dt = I^{1}_{a+}[D^{s}_{a+}[u]](x) =I^{1-s}_{a+}[I^s_{a+}[D^s_{a+}[u]]](x),
		\end{equation}
where we used Lemma \ref{lem:semigrouplaw} in the last equality.
Therefore, by applying $D^{1-s}_{a+}$ to both sides of the equation and exploiting \eqref{eq:rafeq}, we conclude that, for $\Leb{1}$-a.e. $x \in I$,
		\begin{align*}
		u(x) & =\frac{d}{dx} \int_{a}^{x} u(t) \, dt = \frac{d}{dx} I^{s}_{a+}[I^{1-s}_{a+}[u]](x) = D^{1 - s}_{a+}[I^{1-s}_{a+}[u]](x) \\
		& = D^{1 - s}_{a+}[I^{1-s}_{a+}[I^s_{a+}[D^s_{a+}[u]]]](x) = \frac{d}{dx} I^{s}_{a+}[I^{1-s}_{a+}[I^s_{a+}[D^s_{a+}[u]]]](x) \\
		& = \frac{d}{dx} \int_{a}^{x} I^s_{a+}[D^s_{a+}[u]](t) \, dt = I^s_{a+}[D^s_{a+}[u]](x),
\end{align*}
		with $D^s_{a+}[u]\in L^p(I)$, since $I^{1-s}_{a+}[u]\in W^{1,p}(I)$; so that $u\in I^s_{a+}(L^p(I))$, and this concludes the proof.
	\end{proof}
\end{lemma}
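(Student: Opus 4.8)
The plan is to prove both implications by repeatedly invoking the semigroup law (Lemma~\ref{lem:semigrouplaw}) with the exponents $\alpha = 1-s$ and $\beta = s$, whose sum equals $1$, so that the relevant compositions collapse to the primitive operator $I^1_{a+}$. Throughout, note that any $f \in L^p(I)$ lies in $L^1(I)$, so all the fractional integrals appearing below are well defined by Remark~\ref{rem:frac_int_well_posed}, and that each use of Lemma~\ref{lem:semigrouplaw} below is legitimate precisely because the exponents sum to $1$.

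For the ``only if'' direction, I would start from $u = I^s_{a+}[f]$ with $f \in L^p(I)$ and apply Lemma~\ref{lem:semigrouplaw} to get $I^{1-s}_{a+}[u] = I^{1-s}_{a+}[I^s_{a+}[f]] = I^1_{a+}[f]$, i.e.
\[
I^{1-s}_{a+}[u](x) = \int_a^x f(t)\,dt \quad \text{for all } x \in I.
\]
This primitive is absolutely continuous, vanishes at $a$, and has $f \in L^p(I)$ as its $\Leb{1}$-a.e.\ derivative; hence $I^{1-s}_{a+}[u] \in W^{1,p}(I)$ and $I^{1-s}_{a+}[u](a) = 0$, as required.

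For the ``if'' direction, assume $I^{1-s}_{a+}[u] \in W^{1,p}(I)$ with $I^{1-s}_{a+}[u](a) = 0$. Since Sobolev functions on $I$ have absolutely continuous representatives (Remark~\ref{rem:Sobolev_AC_emb}), the pointwise derivative $D^s_{a+}[u] := \frac{d}{dx} I^{1-s}_{a+}[u]$ exists $\Leb{1}$-a.e., belongs to $L^p(I)$, and, using the vanishing condition at $a$,
\[
I^{1-s}_{a+}[u](x) = \int_a^x D^s_{a+}[u](t)\,dt = I^1_{a+}[D^s_{a+}[u]](x) = I^{1-s}_{a+}\big[I^s_{a+}[D^s_{a+}[u]]\big](x),
\]
where the last step is again the semigroup law. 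Setting $v := I^s_{a+}[D^s_{a+}[u]] \in L^p(I)$, we thus have the equality $I^{1-s}_{a+}[u] = I^{1-s}_{a+}[v]$ inside the range of $I^{1-s}_{a+}$.

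The main obstacle is to promote this to the genuine identity $u = v$, i.e.\ to use the injectivity of $I^{1-s}_{a+}$ on $L^1(I)$. Rather than trying to invert $I^{1-s}_{a+}$ directly, I would apply $I^s_{a+}$ to both sides and use the semigroup law once more: $I^s_{a+}[I^{1-s}_{a+}[u]] = I^1_{a+}[u]$ and $I^s_{a+}[I^{1-s}_{a+}[v]] = I^1_{a+}[v]$, so that $\int_a^x u(t)\,dt = \int_a^x v(t)\,dt$ for every $x \in I$. Differentiating in $x$ yields $u = v = I^s_{a+}[D^s_{a+}[u]]$ $\Leb{1}$-a.e., with $D^s_{a+}[u] \in L^p(I)$, hence $u \in I^s_{a+}(L^p(I))$. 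The only delicate points are bookkeeping ones: checking that every invocation of Lemma~\ref{lem:semigrouplaw} is with exponent sum $\le 1$, and that the Sobolev hypothesis genuinely furnishes both the $L^p$ derivative and the pointwise value at $a$ needed above.
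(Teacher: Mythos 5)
Your proposal is correct and follows essentially the same route as the paper: the forward implication via the semigroup law collapsing $I^{1-s}_{a+}\circ I^{s}_{a+}$ to the primitive, and the converse by writing $I^{1-s}_{a+}[u]=I^{1-s}_{a+}\bigl[I^{s}_{a+}[D^{s}_{a+}[u]]\bigr]$ and then applying $I^{s}_{a+}$ and differentiating to cancel the outer $I^{1-s}_{a+}$ (the paper phrases this cancellation as applying $D^{1-s}_{a+}$ to both sides, which is the same operation you perform). Your explicit isolation of the injectivity step, with $v:=I^{s}_{a+}[D^{s}_{a+}[u]]\in L^{p}(I)$ justified by the $L^p$-continuity of $I^{s}_{a+}$, is a slightly cleaner presentation of the identical argument.
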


\subsection{Riemann-Liouville fractional Sobolev spaces}

We introduce now the left Riemann-Liouville fractional Sobolev spaces.

\begin{definition}[Riemann-Liouville fractional Sobolev spaces] \label{def:W_sp_RL}
Let $p \in [1, \infty]$ and $s\in\left(0,1\right)$. We define the {\em left Riemann-Liouville fractional Sobolev space} of order $s$ and summability $p$ as
\begin{equation} \label{eq:W_sp_RL}
W^{s,p}_{RL,a+}\left(I\right):=\left\{u\in L^p\left(I\right) : \ I^{1-s}_{a+}\left[u\right]\in W^{1,p}(I)\right\}.
\end{equation}
\end{definition}

\begin{remark}
Clearly, it is possible to define in an analogous way the {\em right Riemann-Liouville fractional Sobolev spaces} 
\begin{equation*}
W^{s, p}_{RL, b-}(I):=\left\{u\in L^p\left(I\right) : \ I^{1-s}_{b-}\left[u\right]\in W^{1,p}(I)\right\}.
\end{equation*} 
\end{remark}

\begin{remark}
	\label{rem:leftneqright}
	We notice that, if $u\in W^{s,1}_{RL,a+}(I)$, we have $u_Q\in W^{s,1}_{RL,b-}(I)$, thanks to Remark \ref{rem:reflection}. However, this does not necessarily imply that $u\in W^{s,1}_{RL,b-}(I)$. Indeed, let $I = (0, 1)$ and consider $$u(x):=\frac{x^{s-1}}{\Gamma(s)}.$$ 
	By Example \ref{ex:critical_power}, we have $I^{1-s}_{0+}[u](x)=1$ for any $x\in I$, and so $I^{1-s}_{0+}[u]\in W^{1,1}((0,1))$. On the other hand, we have 
	$$
	I^{1-s}_{1-}[u](x)=\frac{1}{\Gamma(1-s) \Gamma(s)}\int_x^1 t^{s-1}(t-x)^{-s}dt=\frac{1}{\Gamma(1-s)\Gamma(s)}\int_1^{\frac{1}{x}}\omega^{s-1}(\omega-1)^{-s}d\omega,
	$$
	and this function belongs to $L^1((0,1))\setminus W^{1,1}((0,1))$. It is easy to check the $L^1$-summability. On the other hand, for any $\varphi \in C^{1}_{c}(I)$, we have
\begin{align*}
\int_{0}^{1} I^{1-s}_{1-}[u](x) \varphi'(x) \, dx & = \frac{1}{\Gamma(1-s) \Gamma(s)} \int_{0}^{1} \int_{1}^{\frac{1}{x}}\omega^{s-1}(\omega-1)^{-s} \varphi'(x) \, d\omega \, dx \\
& = \frac{1}{\Gamma(1-s) \Gamma(s)} \int_{1}^{\infty} \int_{0}^{\frac{1}{\omega}} \omega^{s-1}(\omega-1)^{-s} \varphi'(x) \, dx \, d\omega \\
& = \frac{1}{\Gamma(1-s) \Gamma(s)} \int_{1}^{\infty} \omega^{s-1}(\omega-1)^{-s} \varphi \left (\frac{1}{\omega}\right) \, d\omega \\
& = \frac{1}{\Gamma(1-s) \Gamma(s)} \int_{0}^{1} \tau^{1 - s} (1 - \tau)^{-s} \tau^{s} \varphi(\tau) \, \frac{d \tau}{\tau^{2}} \\
& = \frac{1}{\Gamma(1-s) \Gamma(s)} \int_{0}^{1} \frac{1}{\tau (1 - \tau)^{s}} \varphi(\tau) \, d \tau,
\end{align*}
which means that the weak derivative of $I^{1-s}_{1-}[u](x)$ is 
$$ - \frac{1}{\Gamma(1-s) \Gamma(s)} \frac{1}{x (1 - x)^{s}},$$
and so we conclude that
	$$
	D^s_{1-}[u](x)=\frac{1}{\Gamma(1-s) \Gamma(s)}\frac{1}{x(1-x)^s}\notin L^1(I).
	$$
\end{remark}

Remark \ref{rem:reflection} actually shows that $u\in W^{s,p}_{RL,a+}(I)$ if and only if $u_{Q} \in W^{s,p}_{RL,b-}(I)$, where $Q(x)=a+b-x$, though Remark \ref{rem:leftneqright} clarifies that $W^{s,p}_{RL,a+}(I)\neq W^{s,p}_{RL,b-}(I)$, in general. Nevertheless, since our results are analogous both for left and for right fractional integrals and derivatives, from this point on we shall work with the left Riemann-Liouville fractional Sobolev spaces.

It is not difficult to see that the space $W^{s,p}_{RL,a+}\left(I\right)$, endowed with the norm
\begin{equation}
\label{eq:norminrimliu}
\left\|u\right\|_{W^{s,p}_{RL,a+}(I)}:=\left\|u\right\|_{L^p(I)}+\left\|I^{1-s}_{a+}[u]\right\|_{W^{1,p}(I)},
\end{equation}
is a Banach space.

In the light of Definition \ref{def:W_sp_RL}, we may rephrase Lemma \ref{lem:representcriteri} in the following way.

\begin{lemma} \label{lem:image_fract_int_Sob}
Let $s\in(0,1)$ and $p \in [1, \infty]$. Then, $u\in I^s_{a+}(L^p(I))$ if and only if $u \in W^{s,p}_{RL, a+}(I)$ and $I^{1-s}_{a+}[u](a)=0$. 
\end{lemma}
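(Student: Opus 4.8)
The plan is to observe that this statement is simply a translation of the $L^p$-representability criterion (Lemma \ref{lem:representcriteri}) into the language of Riemann-Liouville fractional Sobolev spaces, so the proof reduces to matching up hypotheses and keeping track of the summability exponents.

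First I would note that membership in $I^s_{a+}(L^p(I))$ already forces $u\in L^p(I)$: by point (1) of Proposition \ref{prop:cont_BLNT}, the operator $I^s_{a+}$ maps $L^p(I)$ continuously into itself, so if $u=I^s_{a+}[f]$ with $f\in L^p(I)$, then $u\in L^p(I)$. This ensures that Lemma \ref{lem:representcriteri} is applicable with the choice $q=p$, which is the only admissible choice in the present setting (in particular, when $p=\infty$ one takes $q=\infty$).

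Next I would apply Lemma \ref{lem:representcriteri} with $q=p$: it states that $u\in I^s_{a+}(L^p(I))$ if and only if $I^{1-s}_{a+}[u]\in W^{1,p}(I)$ and $I^{1-s}_{a+}[u](a)=0$. Finally, by Definition \ref{def:W_sp_RL}, the conjunction ``$u\in L^p(I)$ and $I^{1-s}_{a+}[u]\in W^{1,p}(I)$'' is precisely the statement that $u\in W^{s,p}_{RL,a+}(I)$; combining this with the previous equivalence yields the forward implication. Conversely, if $u\in W^{s,p}_{RL,a+}(I)$ then in particular $u\in L^p(I)$ and $I^{1-s}_{a+}[u]\in W^{1,p}(I)$, so Lemma \ref{lem:representcriteri} (again with $q=p$) applies, and together with the hypothesis $I^{1-s}_{a+}[u](a)=0$ it gives $u\in I^s_{a+}(L^p(I))$.

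There is essentially no serious obstacle here, since all the substance is already contained in Lemma \ref{lem:representcriteri}; the only point deserving a moment's care is the bookkeeping of exponents, namely verifying that in both directions one has $u\in L^p(I)$ so that Lemma \ref{lem:representcriteri} may be invoked with $q=p$, which is exactly what the continuity statement in Proposition \ref{prop:cont_BLNT} provides.
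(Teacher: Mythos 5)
Your proof is correct and matches the paper's intent exactly: the paper presents this lemma as a direct rephrasing of Lemma \ref{lem:representcriteri} in the language of Definition \ref{def:W_sp_RL}, giving no separate argument. Your only addition is the (worthwhile) observation that Proposition \ref{prop:cont_BLNT}(1) guarantees $u\in L^{p}(I)$ in the forward direction so that the criterion applies with $q=p$.
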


We consider now a version of the fundamental Theorem of Calculus for left Riemann-Liouville fractional derivatives. A similar result was stated in \cite[Proposition 5]{MR3311433}, however we provide here a short proof, for completeness.

\begin{lemma}\label{lem:invinteder}
Let $s\in(0,1)$ and $u \in L^{1}(I)$. Then, for $\Leb{1}$-a.e. $x \in I$, we have
\begin{equation}\label{eq:invinteder_1}
		u(x)=D^s_{a+}[I^s_{a+}[u]](x).
\end{equation}
If $u\in W^{s,1}_{RL,a+}(I)$, then, for $\Leb{1}$-a.e. $x\in I$, we also have 
\begin{equation}\label{eq:invinteder_2}
		u(x) = I^s_{a+}[D^s_{a+}[u]](x)+\frac{I^{1-s}_{a+}[u](a)}{\Gamma(s)}(x-a)^{s-1}.
\end{equation}
Finally, if $u\in W^{s,1}_{RL,a+}(I) \cap I^s_{a+}(L^1(I))$, then \begin{equation}\label{eq:invinteder_3}
		u(x) = D^s_{a+}[I^s_{a+}[u]](x) = I^s_{a+}[D^s_{a+}[u]](x) \ \text{ for } \Leb{1}\text{-a.e. } x \in I.
\end{equation}
\end{lemma}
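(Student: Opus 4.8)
The plan is to obtain all three identities from two tools already established: the semigroup law of Lemma~\ref{lem:semigrouplaw}, applied throughout with the exponent pair $(1-s,s)$ (which sums to $1$, so the hypothesis $\alpha+\beta\le 1$ is met), and the representability criterion of Lemma~\ref{lem:image_fract_int_Sob}.

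For \eqref{eq:invinteder_1}: given $u\in L^1(I)$, Lemma~\ref{lem:semigrouplaw} gives $I^{1-s}_{a+}[I^s_{a+}[u]](x) = I^1_{a+}[u](x) = \int_a^x u(t)\,dt$, which is absolutely continuous in $x$; hence $I^s_{a+}[u]\in W^{s,1}_{RL,a+}(I)$ and, differentiating and using the Lebesgue differentiation theorem, $D^s_{a+}[I^s_{a+}[u]](x) = \frac{d}{dx}\int_a^x u(t)\,dt = u(x)$ for $\Leb{1}$-a.e.\ $x\in I$. This same computation yields, as a byproduct, that $I^{1-s}_{a+}$ is injective on $L^1(I)$: if $I^{1-s}_{a+}[v] = 0$ then $I^1_{a+}[v] = I^s_{a+}[I^{1-s}_{a+}[v]] = 0$ by the semigroup law, so $v = 0$ $\Leb{1}$-a.e. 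I will use this injectivity in the next step in place of differentiating a merely integrable function.

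For \eqref{eq:invinteder_2}: let $u\in W^{s,1}_{RL,a+}(I)$, so $I^{1-s}_{a+}[u]\in W^{1,1}(I)$ admits an absolutely continuous representative whose classical derivative equals $D^s_{a+}[u]\in L^1(I)$ $\Leb{1}$-a.e., whence $I^{1-s}_{a+}[u](x) = I^{1-s}_{a+}[u](a) + I^1_{a+}[D^s_{a+}[u]](x)$ for every $x$. I would then introduce
$$ w(x) := \frac{I^{1-s}_{a+}[u](a)}{\Gamma(s)}(x-a)^{s-1} + I^s_{a+}[D^s_{a+}[u]](x), $$
which lies in $L^1(I)$ because $s>0$ makes $(x-a)^{s-1}$ integrable and $D^s_{a+}[u]\in L^1(I)$ makes $I^s_{a+}[D^s_{a+}[u]]\in L^1(I)$ by Remark~\ref{rem:frac_int_well_posed}. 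Applying $I^{1-s}_{a+}$ to $w$, computing the first summand via the identity $I^{1-s}_{a+}\!\big[\tfrac{(\cdot-a)^{s-1}}{\Gamma(s)}\big]\equiv 1$ from Example~\ref{ex:critical_power} and the second via the semigroup law, I obtain $I^{1-s}_{a+}[w](x) = I^{1-s}_{a+}[u](a) + I^1_{a+}[D^s_{a+}[u]](x) = I^{1-s}_{a+}[u](x)$. By the injectivity of $I^{1-s}_{a+}$ on $L^1(I)$ proved above, $w = u$ $\Leb{1}$-a.e., which is exactly \eqref{eq:invinteder_2}.

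For \eqref{eq:invinteder_3}: if in addition $u\in I^s_{a+}(L^1(I))$, then Lemma~\ref{lem:image_fract_int_Sob} forces $I^{1-s}_{a+}[u](a) = 0$, so the first term in \eqref{eq:invinteder_2} drops out and $u = I^s_{a+}[D^s_{a+}[u]]$ $\Leb{1}$-a.e.; combined with \eqref{eq:invinteder_1} this gives both equalities. I do not expect a serious obstacle here; the points needing care are keeping every invocation of the semigroup law within its range of validity (always the pair $(1-s,s)$), checking $w\in L^1(I)$ as above, and resisting the temptation to differentiate the $L^1$-identity $I^{1-s}_{a+}[w] = I^{1-s}_{a+}[u]$ directly — which the injectivity of $I^{1-s}_{a+}$ on $L^1(I)$ makes unnecessary.
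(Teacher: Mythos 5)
Your proof is correct and follows essentially the same route as the paper's: semigroup law to reduce \eqref{eq:invinteder_1} to differentiating $\int_a^x u$, the identity $I^{1-s}_{a+}\bigl[\tfrac{(\cdot-a)^{s-1}}{\Gamma(s)}\bigr]\equiv 1$ plus the semigroup law to identify $I^{1-s}_{a+}[u]$ with $I^{1-s}_{a+}[w]$, and the representability criterion to kill the boundary term in \eqref{eq:invinteder_3}. The only (harmless) variation is that you cancel $I^{1-s}_{a+}$ via its injectivity on $L^1(I)$, whereas the paper applies $D^{1-s}_{a+}$ to both sides and invokes \eqref{eq:invinteder_1} — two faces of the same left-inverse property.
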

	\begin{proof}
If $u\in L^1(I)$, we have $I^s_{a+}[u]\in L^1(I)$, by Remark \ref{rem:frac_int_well_posed}, and, by Lemma \ref{lem:semigrouplaw},
			$$
			I^{1-s}_{a+}[I^s_{a+}[u]](x)=I^1_{a+}[u](x)=\int_a^x u(t)dt\in W^{1,1}(I).
			$$
Therefore, for $\Leb{1}$-a.e. $x \in I$, we get
			$$
			D^s_{a+}[I^s_{a+}[u]](x) = \frac{d}{dx} I^{1-s}_{a+}[I^s_{a+}[u]](x) = \frac{d}{dx}(I^1_{a+}[u](x)) = u(x).
			$$
In order to prove \eqref{eq:invinteder_2}, we notice that $I^{1 - s}_{a+}[u] \in W^{1, 1}(I)$ with weak derivative $D^{s}_{a+}[u] \in L^{1}(I)$, so that, for $\Leb{1}$-a.e. $x \in I$, 
			\begin{equation*}
			\begin{split}
			I^{1-s}_{a+}[u](x)&=\int_a^x D^s_{a+}[u](t)dt+I^{1-s}_{a+}[u](a) \\
			&=I^{1-s}_{a+}[I^s_{a+}[D^s_{a+}[u]]](x)+I^{1-s}_{a+}\left[\frac{I^{1-s}_{a+}[u](a)}{\Gamma(s)}(\cdot-a)^{s-1}\right](x),
			\end{split}
			\end{equation*}
by Lemma \ref{lem:semigrouplaw} and \eqref{eq:frac_int_critical_power}. We notice that, by Remark \ref{rem:frac_int_well_posed}, $I^s_{a+}[D^s_{a+}[u]] \in L^1(I)$, since $D^{s}_{a+}[u] \in L^{1}(I)$ by assumption. Therefore, we apply $D^{1-s}_{a+}$ to both sides of the equation and use \eqref{eq:invinteder_1} to obtain \eqref{eq:invinteder_2}. 
Finally, if $u\in W^{s,1}_{RL,a+}(I) \cap I^s_{a+}(L^1(I))$, then, by Lemma \ref{lem:representcriteri} with $p=q=1$, we have $I^{1-s}_{a+}[u](a)=0$, and this ends the proof.
	\end{proof}

We notice that, in an analogous way, it is possible to find a version of the Fundamental Theorem of Calculus for right Riemann-Liouville derivatives (see \cite[Proposition 6]{MR3311433}).

\begin{remark}
	It is worth noticing that these equalities are stable when $s\to 1^-$ for $u\in C^1(\overline{I})$. Indeed, we have 
	\begin{equation*}
	\begin{split}
	u(x)&=\lim_{s\to 1^-}D^s_{a+}[I^s_{a+}[u]](x)=\frac{d}{dx}\left(\int_a^x u(t)dt\right)=\int_a^x u'(t)dt+u(a) \\
	&=\lim_{s\to 1^-}I^s_{a+}[D^s_{a+}[u]](x)+\frac{I^{1-s}_{a+}[u](a)}{\Gamma(s)}(x-a)^{s-1},
	\end{split}
	\end{equation*}
	where the second equality exploits Lemma \ref{lem:fractional_integral_s_0}.
\end{remark}

We point out that there is a duality relation between the left Riemann-Liouville fractional derivative and the Caputo right fractional derivative, as shown in the following lemma.

\begin{lemma} \label{result:intbp}
	Let $u\in W^{s,1}_{RL,a+}(I)$, $v\in C^1_c(I)$ and $s \in (0, 1)$. Then we have
	\begin{equation} \label{eq:intbp}
	\int_{a}^{b} D^s_{a+}[u](x) \, v(x) \, dx = \int_{a}^{b} u(x) \, {}^CD_{b-}^s[v](x) \, dx.
	\end{equation}
	\begin{proof}
		Integrating by parts, and using Fubini's theorem, we have
		\begin{align*}
		\int_a^b D^s_{a+}[u](x)v(x)dx &=-\int_{a}^{b} I_{a+}^{1 - s} [u](x) \, v'(x) \, dx  \\
		&= -\frac{1}{\Gamma\left(1-s\right)} \int_{a}^{b} \int_{a}^{x} \frac{u(t)}{(x - t)^{s}} \, v'(x) \, dt \, dx  \\
		& = -\frac{1}{\Gamma\left(1-s\right)} \int_{a}^{b} \int_{t}^{b} \frac{v'(x)}{(x - t)^{s}} \, u(t) \, dx \, dt \\
		& = \int_{a}^{b} u(t) \, {}^CD_{b-}^s[v](t) \, dt.
		\end{align*}
	\end{proof}
\end{lemma}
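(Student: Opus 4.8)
The plan is to mimic the proof of the duality relation in Lemma \ref{result:duality_fract_int}, namely to use the integration by parts characterization of the weak derivative $D^s_{a+}[u] = \frac{d}{dx} I^{1-s}_{a+}[u]$ together with Fubini's theorem. Since $u \in W^{s,1}_{RL,a+}(I)$, by definition $I^{1-s}_{a+}[u] \in W^{1,1}(I)$, so its distributional derivative is the $L^1$ function $D^s_{a+}[u]$, and the integration by parts formula
\begin{equation*}
\int_a^b D^s_{a+}[u](x) v(x) \, dx = - \int_a^b I^{1-s}_{a+}[u](x) v'(x) \, dx
\end{equation*}
is valid for every $v \in C^1_c(I)$ (no boundary terms appear since $v$ has compact support in $I$).

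Next I would expand $I^{1-s}_{a+}[u]$ using its integral definition \eqref{eq:frac_int_1} with $s$ replaced by $1-s$, obtaining a double integral over the triangle $\{a < t < x < b\}$, and then swap the order of integration via Fubini's theorem. The inner integral in $x$ over $(t,b)$ of $\frac{v'(x)}{(x-t)^s}$ is, up to the sign and the factor $\frac{1}{\Gamma(1-s)}$, exactly $-{}^C D^s_{b-}[v](t)$ as defined in \eqref{eq:caputor}. Collecting the signs then yields \eqref{eq:intbp}.

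The only point requiring a little care is the justification of Fubini's theorem: one needs the integrand $\frac{|u(t)| |v'(x)|}{(x-t)^s}$ to be integrable over the triangle. This follows because $v' \in L^\infty(I)$ (indeed $v \in C^1_c(I)$), and $\int_a^b \int_a^x \frac{|u(t)|}{(x-t)^s} \, dt\, dx = \Gamma(1-s)\|I^{1-s}_{a+}[|u|]\|_{L^1(I)} < \infty$ by the computation in Remark \ref{rem:frac_int_well_posed} (applied with $1-s$ in place of $s$). Thus the hypotheses of Fubini are met. I do not expect any serious obstacle here; the argument is a routine adaptation of Lemma \ref{result:duality_fract_int}, the main subtlety being merely to recognize the right Caputo derivative in the resulting inner integral and to keep track of the signs coming from the derivative of $I^{1-s}_{b-}$ in Definition \ref{def:fract_der_RL} and the minus sign in \eqref{eq:caputor}.
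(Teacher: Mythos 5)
Your proposal follows exactly the paper's own argument: integrate by parts using $I^{1-s}_{a+}[u]\in W^{1,1}(I)$ and the compact support of $v$, expand the fractional integral, apply Fubini, and recognize the inner integral as ${}^C D^s_{b-}[v]$. The explicit justification of Fubini via the bound from Remark \ref{rem:frac_int_well_posed} is a welcome addition that the paper leaves implicit, but the route is the same.
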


We notice that, in light of the continuity of the fractional integral in $L^p$ given by Proposition \ref{prop:cont_BLNT}, the norm in \eqref{eq:norminrimliu} is equivalent to the one given by
$$
\left\|u\right\|:=\left\|u\right\|_{L^p(I)}+\left\|D^s_{a+}[u]\right\|_{L^p(I)}.
$$
Therefore, one could define the space $W^{s,p}_{RL,a+}(I)$ simply requiring that $u\in L^p(I)$ has a weak fractional derivative in $L^p(I)$. This would mean that there exists a function $w \in L^{p}(I)$ such that
\begin{equation*}
\int_{a}^{b} u(x) \, {}^CD_{b-}^s[v](x) \, dx = \int_{a}^{b} w(x) \, v(x) \, dx, 
\end{equation*}
for any $v \in C^{1}_{c}(I)$, in analogy with the duality formula \eqref{eq:intbp}.

\section{Main embedding and asymptotic results} \label{sec:main}

We start with a technical result concerning the action of the fractional integral on $\M(I)$. In what follows, for any $\mu \in \M(I)$ and $x \in I$, we shall use the notation $\displaystyle \int_a^x  f \, d\mu$ to denote the integral on the open interval $(a,x)$ of some Borel measurable function $f$. This choice is justified by the fact that $|\mu|\left(\{x\}\right)=0$ for all but countably many $x\in(a,b)$, thanks to the nonconcentration properties of Radon measures. As a consequence, there is no ambiguity when integrating the function $\displaystyle x \to \int_a^x  f \, d\mu$ in $dx$ over $I$.

\begin{proposition} \label{result:measure_fract_int}
Let $s \in (0, 1)$. The map $I^{s}_{a+}$ can be continuously extended to a map from $\mathcal{M}(I)$ into $L^{1}(I)$, by setting 
\begin{equation*}
I^{s}_{a+}[\mu](x):= \frac{1}{\Gamma\left(s\right)} \int_a^x \frac{d\mu(t)}{(x-t)^{1 - s}} 
\end{equation*}
for $\mu \in \M(I)$.
Then, $I^{s}_{a+}$ satisfies the following bound:
\begin{equation} \label{eq:measure_frat_int}
\left\|I_{a+}^{s}[\mu]\right\|_{L^{1}(I)} \le \frac{(b - a)^{s}}{\Gamma\left(1 + s\right)}|\mu|(I),
\end{equation}
for all $\mu \in \M(I)$.
\begin{proof}
	Since the function $(x - t)^{s - 1}$ is continuous in $t \in (a, x)$, for any fixed $x \in (a, b)$, the integral of this function against any nonnegative measure $\mu \in \M(I)$ is well defined, and we set
	\begin{equation*}
	I^{s}_{a+}[\mu](x):= \frac{1}{\Gamma\left(s\right)} \int_a^x\frac{d\mu(t)}{(x-t)^{1 - s}}.
	\end{equation*}
	Then, a simple computation similar to the one in Remark \ref{rem:frac_int_well_posed} shows that
	\begin{align*}
	\left\|I_{a+}^{s}[\mu]\right\|_{L^{1}(I)}&=\int_a^b\left| I_{a+}^{s}[\mu](x)\right|dx = \frac{1}{\Gamma\left(s\right)} \int_a^b\left|\int_a^x \frac{d\mu(t)}{(x-t)^{1 - s}}\right|dx=\frac{1}{\Gamma(s)}\int_a^b \int_a^x\frac{d \mu(t)}{(x-t)^{1 - s}} \, dx \\
	&= \frac{1}{\Gamma\left(s\right)} \int_a^b d \mu (t)\int_t^b\frac{dx}{(x-t)^{1 - s}}= \frac{1}{s \Gamma\left(s\right)} \int_a^b (b-t)^{s}d \mu(t) \\
	&\leq \frac{(b - a)^{s}}{\Gamma\left(1 + s\right)}\int_a^b d \mu(t)=\frac{(b - a)^{s}}{\Gamma\left(1 + s\right)} \mu(I).
	\end{align*}
	In the general case of $\mu \in \M(I)$, we consider the Jordan decomposition $\mu = \mu^{+} - \mu^{-}$ and we set
	\begin{equation*}
	I^{s}_{a+}[\mu](x) := I^{s}_{a+}[\mu^{+}](x) - I^{s}_{a+}[\mu^{-}](x) = \frac{1}{\Gamma\left(s\right)} \int_a^x\frac{d\mu(t)}{(x-t)^{1 - s}},
	\end{equation*}
by the linearity of the integral.
	Therefore, arguing as above, for any $\mu\in\mathcal{M}(I)$ we get
	$$
	\left\|I_{a+}^{s}[\mu]\right\|_{L^{1}(I)}\leq\frac{1}{\Gamma(s)}\int_a^b \int_a^x\frac{d|\mu|(t)}{(x-t)^{1 - s}} \, dx \leq \frac{(b - a)^{s}}{\Gamma\left(1 + s\right)}|\mu|(I),
	$$
	which ends the proof.
\end{proof}
\end{proposition}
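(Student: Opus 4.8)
The plan is to establish the bound \eqref{eq:measure_frat_int} first for nonnegative measures by a direct Tonelli computation — essentially the one already performed in Remark \ref{rem:frac_int_well_posed} — and then to reduce the general case to the nonnegative one via the Jordan decomposition; the fact that the resulting map is a genuine continuous extension of $I^{s}_{a+}$ will then follow from linearity together with the bound.

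First I would fix $\mu \in \M(I)$ with $\mu \ge 0$ and observe that $(x,t) \mapsto (x-t)^{s-1}\chi_{\{t<x\}}$ is a nonnegative Borel function on $I \times I$. By Tonelli's theorem applied to the product of $\Leb{1}$ and $\mu$, the function $x \mapsto \int_a^x (x-t)^{s-1}\,d\mu(t)$ is (Borel) measurable on $I$, and
\begin{align*}
\int_a^b \left(\int_a^x \frac{d\mu(t)}{(x-t)^{1-s}}\right) dx &= \int_a^b \left(\int_t^b \frac{dx}{(x-t)^{1-s}}\right) d\mu(t) \\
&= \frac{1}{s}\int_a^b (b-t)^s\,d\mu(t) \le \frac{(b-a)^s}{s}\,\mu(I).
\end{align*}
Dividing by $\Gamma(s)$ and using $\Gamma(s+1)=s\Gamma(s)$ yields $\|I^{s}_{a+}[\mu]\|_{L^{1}(I)} \le \frac{(b-a)^s}{\Gamma(s+1)}\mu(I)$; in particular the inner integral is finite for $\Leb{1}$-a.e.\ $x\in I$, so $I^{s}_{a+}[\mu]$ is a well-defined element of $L^{1}(I)$. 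Note that integrating over the open interval $(a,x)$ removes any issue coming from the singularity of $(x-t)^{s-1}$ at $t=x$.

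Next, for a general $\mu \in \M(I)$ I would take the Jordan decomposition $\mu = \mu^{+} - \mu^{-}$, with $|\mu| = \mu^{+} + \mu^{-}$, set $I^{s}_{a+}[\mu] := I^{s}_{a+}[\mu^{+}] - I^{s}_{a+}[\mu^{-}]$, which by linearity of the integral equals $\frac{1}{\Gamma(s)}\int_a^x (x-t)^{s-1}\,d\mu(t)$, and observe that $|I^{s}_{a+}[\mu]| \le I^{s}_{a+}[|\mu|]$ $\Leb{1}$-a.e.; applying the nonnegative case to $|\mu|$ then gives \eqref{eq:measure_frat_int}. Finally, when $d\mu = u\,dx$ for $u \in L^{1}(I)$ the formula reduces to Definition \ref{def:fract_int}, so the construction genuinely extends $I^{s}_{a+}$, and since the extended map is linear and satisfies \eqref{eq:measure_frat_int} it is a bounded, hence continuous, linear operator from $\M(I)$ (with the total variation norm) into $L^{1}(I)$.

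I do not anticipate any real obstacle: the only point requiring a little care is the measurability of $x \mapsto I^{s}_{a+}[\mu](x)$, which is handled by invoking Tonelli's theorem on the product of $\Leb{1}$ and $\mu$ rather than a Fubini statement for two measures on the same space; everything else is a routine repetition of the estimate in Remark \ref{rem:frac_int_well_posed}.
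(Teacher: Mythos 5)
Your proposal is correct and follows essentially the same route as the paper: a Tonelli computation for nonnegative measures giving the bound $\frac{(b-a)^s}{\Gamma(1+s)}\mu(I)$, followed by the Jordan decomposition and the pointwise estimate $|I^{s}_{a+}[\mu]| \le I^{s}_{a+}[|\mu|]$ for the general case. Your added remarks on measurability via the product measure and on consistency with Definition \ref{def:fract_int} are sensible refinements of the same argument.
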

It is not difficult to see that \cref{result:duality_fract_int} can be extended to couples of measures and essentially bounded functions.

\begin{lemma} \label{result:duality_fract_int_meas}
Let $\mu \in \M(I)$, $\phi \in L^{\infty}(I)$ and $s \in (0, 1)$. Then we have
\begin{equation} \label{eq:duality_fract_int_meas}
\int_{a}^{b} I^{s}_{a+}[\mu](x) \, \phi(x) \, dx = \int_a^b \, I_{b-}^{s}[\phi](x) \, d\mu(x).
\end{equation}
\end{lemma}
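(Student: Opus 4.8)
The plan is to reduce the measure--function duality to the already-established function--function duality in Lemma~\ref{result:duality_fract_int} by an approximation argument. First I would approximate the measure $\mu \in \M(I)$ in the weak-$*$ sense: extend $\mu$ by zero to a finite Radon measure $\tilde\mu$ on $\R$ as in the preamble, mollify to get $u_\eps := \rho_\eps \ast \tilde\mu \in C^\infty(\R)$, and restrict to $I$. One has $u_\eps \weakto \mu$ in the sense of measures on every compact subinterval, and moreover $\|u_\eps\|_{L^1(I)}$ stays bounded by $|\mu|(\overline{I})$ (in fact one must be slightly careful near the endpoints because mass of $\tilde\mu$ sitting at $a$ or $b$ could leak out of $I$; choosing $\mu$ first restricted to a slightly larger interval, or simply keeping track that $|\mu|$ assigns no mass to $\partial I$ when $\mu \in \M(I)$, resolves this). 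Then Lemma~\ref{result:duality_fract_int} applied to $u_\eps$ and $\phi$ — both in $L^1(I)$, indeed $\phi \in L^\infty(I) \subset L^1(I)$ — gives
\begin{equation*}
\int_a^b I^s_{a+}[u_\eps](x)\,\phi(x)\,dx = \int_a^b u_\eps(x)\, I^s_{b-}[\phi](x)\,dx.
\end{equation*}

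Next I would pass to the limit $\eps \to 0^+$ on both sides. On the right-hand side, by Proposition~\ref{prop:cont_BLNT}(1) and Corollary~\ref{cor:bvintoholder}-type reasoning, $I^s_{b-}[\phi] \in L^\infty(I)$ (since $\phi \in L^\infty(I)$, the trivial bound $|I^s_{b-}[\phi](x)| \le \frac{(b-a)^s}{\Gamma(s+1)}\|\phi\|_{L^\infty(I)}$ from Remark~\ref{rem:constant_Riesz_Thorin} applies to the right integral as well), so $I^s_{b-}[\phi]$ is a bounded Borel function and the weak-$*$ convergence $u_\eps\,\Leb1 \weakto \mu$ yields $\int_a^b u_\eps\, I^s_{b-}[\phi]\,dx \to \int_a^b I^s_{b-}[\phi]\,d\mu$ — here one should approximate the bounded Borel integrand $I^s_{b-}[\phi]$ by continuous functions, or invoke that weak-$*$ convergence of mollifications tests against all bounded Borel functions that are continuous $|\mu|$-a.e., which holds since $I^s_{b-}[\phi]$ is only merely bounded. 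On the left-hand side, I would instead use the dual pairing directly: by Fubini/Tonelli the left integral equals $\int_a^b \bigl(I^s_{a+}[\phi\text{-weighted kernel}]\bigr)$, or more cleanly, by Proposition~\ref{result:measure_fract_int} the map $I^s_{a+}$ is continuous from $\M(I)$ to $L^1(I)$ and $u_\eps \,\Leb1 \to \mu$ in $\M(I)$ with uniformly bounded total variation, which combined with $\phi \in L^\infty(I)$ gives $\int_a^b I^s_{a+}[u_\eps]\,\phi\,dx \to \int_a^b I^s_{a+}[\mu]\,\phi\,dx$ provided $I^s_{a+}[u_\eps] \to I^s_{a+}[\mu]$ in $L^1(I)$; the latter follows because $I^s_{a+}$ is \emph{linear} and continuous on $\M(I)$ by \eqref{eq:measure_frat_int}, so $\|I^s_{a+}[u_\eps] - I^s_{a+}[\mu]\|_{L^1(I)} \le \frac{(b-a)^s}{\Gamma(1+s)}|u_\eps\Leb1 - \mu|(I)$, and this goes to $0$... \emph{except} that weak-$*$ convergence does not control total variation norm distance. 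So the left side also needs the weak-$*$/Fubini route rather than a norm estimate.

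A cleaner organization, which I would actually adopt, is to avoid limits on the left entirely: for $\mu \ge 0$ both sides are, by Tonelli's theorem, equal to the same iterated integral
\begin{equation*}
\frac{1}{\Gamma(s)} \int_a^b \int_a^x \frac{d\mu(t)\,\phi(x)}{(x-t)^{1-s}}\,dx = \frac{1}{\Gamma(s)} \int_a^b \Bigl(\int_t^b \frac{\phi(x)}{(x-t)^{1-s}}\,dx\Bigr) d\mu(t),
\end{equation*}
where the interchange is legitimate because the integrand $\phi(x)(x-t)^{s-1}$ is, up to sign, dominated by $\|\phi\|_{L^\infty}(x-t)^{s-1}$, whose double integral against $d\mu(t)\,dx$ is finite by the computation in Proposition~\ref{result:measure_fract_int}. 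The inner integral on the right is exactly $\Gamma(s)\, I^s_{b-}[\phi](t)$, giving \eqref{eq:duality_fract_int_meas} for nonnegative $\mu$. Then one extends to signed $\mu \in \M(I)$ by the Jordan decomposition $\mu = \mu^+ - \mu^-$ and linearity of both sides, exactly as in the proof of Proposition~\ref{result:measure_fract_int}.

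The main obstacle is purely a matter of integrability bookkeeping at the outset: one must verify that Fubini--Tonelli applies, i.e. that $(x,t) \mapsto \phi(x)(x-t)^{s-1}\chi_{\{t<x\}}$ is $|\mu| \otimes \Leb1$-integrable on $I \times I$. This is immediate from $\phi \in L^\infty(I)$ and the finiteness of $\int_a^b \int_t^b (x-t)^{s-1}\,dx\,d|\mu|(t) = \frac{1}{s}\int_a^b (b-t)^s\,d|\mu|(t) \le \frac{(b-a)^s}{s}|\mu|(I) < \infty$, already computed in Proposition~\ref{result:measure_fract_int}. After that, the proof is a one-line application of Fubini plus the Jordan decomposition, so no approximation is genuinely needed; the approximation route sketched first is an alternative but is strictly more delicate and I would not use it.
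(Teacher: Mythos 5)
Your final adopted argument --- verifying that $(x,t)\mapsto \phi(x)(x-t)^{s-1}\chi_{\{t<x\}}$ is $|\mu|\otimes\Leb{1}$-integrable via the computation from Proposition \ref{result:measure_fract_int} and then applying Fubini--Tonelli (with Jordan decomposition for signed $\mu$) --- is exactly the paper's proof, which likewise checks $\int_a^b\int_a^x |\phi(x)|(x-t)^{s-1}\,d|\mu|(t)\,dx \le \|\phi\|_{L^\infty(I)}\frac{(b-a)^s}{s}|\mu|(I)$ and swaps the order of integration. The mollification route you sketch first and then correctly discard is not needed, and your own diagnosis of its pitfalls is accurate; the one-line Fubini argument you settle on is the right (and the paper's) one.
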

\begin{proof}
Notice that, by Proposition \ref{prop:cont_BLNT}, $I_{b-}^{s}[\phi] \in C^{0, s}(\overline{I})$, so that it is continuous and bounded, in particular. This implies that the integral on the right hand side of \eqref{eq:duality_fract_int_meas} is well defined. In addition, notice that
\begin{align*}
\int_{a}^{b} \int_a^x \frac{|\phi(x)|}{(x - t)^{1 - s}} \, d |\mu|(t) \, dx & \le \|\phi\|_{L^{\infty}(I)} \int_a^b \int_{t}^{b} (x - t)^{s - 1} \, dx \, d |\mu|(t) \\
& \le \|\phi\|_{L^{\infty}(I)} \frac{(b - a)^{s}}{s} |\mu|(I) < \infty.
\end{align*} 
Therefore, we may apply Fubini's theorem, and we obtain
\begin{align*}
\int_{a}^{b} I_{a+}^{s} [\mu](x) \, \phi(x) \, dx & = \frac{1}{\Gamma\left(s\right)} \int_{a}^{b} \int_a^x \phi(x) \, \frac{d\mu(t)}{(x - t)^{1 - s}} \, dx  =  \frac{1}{\Gamma\left( s\right)} \int_a^b \int_{t}^{b} \frac{\phi(x)}{(x - t)^{1 - s}} \, dx \, d \mu(t) \\
& = \int_{a}^{b} I_{b-}^{s}[\phi](t) \, d \mu(t).
\end{align*}
\end{proof}

Another interesting consequence of Proposition \ref{result:measure_fract_int} is that we can generalize Lemma \ref{lem:weakpq}, by proving the continuity of $I^{s}_{a+}$ from $\mathcal{M}(I)$ to $L^{\frac{1}{1 - s}, \infty}(I)$.

\begin{lemma} \label{lem:weak_meas}
Let $s\in(0,1)$. Then $I^s_{a+}$ maps continuously $\mathcal{M}(I)$ in $L^{\frac{1}{1 - s}, \infty}(I)$; that is, there exists $C_s > 0$ such that
\begin{equation*} 
\Leb{1}\left(\left\{x\in I: |I^s_{a+}[\mu](x)|>t \right\}\right) \le C_s\left(\frac{|\mu|(I)}{t}\right)^{\frac{1}{1-s}} \text{ for all } \mu \in \mathcal{M}(I).
\end{equation*}
\end{lemma}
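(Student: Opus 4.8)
The plan is to reduce the statement to the $L^{1}$-estimate of Lemma \ref{lem:weakpq}, approximating $\mu$ by smooth functions via mollification and passing to the limit with the help of the lower semicontinuity of the weak-$L^{\frac{1}{1-s}}$ distribution function. I would first extend $\mu$ to a finite Radon measure on $\R$ by setting $\tilde{\mu}:=\mu\res I$, so that $|\tilde{\mu}|(\R)=|\mu|(I)$. Writing $K(y):=\frac{1}{\Gamma(s)|y|^{1-s}}$ and $H:=\chi_{(0,+\infty)}$ as in the proof of Lemma \ref{lem:weakpq}, the Fubini computation behind Proposition \ref{result:measure_fract_int} shows that $(KH)\ast\tilde{\mu}\in L^{1}_{\mathrm{loc}}(\R)$ and that $((KH)\ast\tilde{\mu})(x)=I^{s}_{a+}[\mu](x)$ for $\Leb{1}$-a.e.\ $x\in I$. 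For $\eps>0$ set $\mu_{\eps}:=\rho_{\eps}\ast\tilde{\mu}$; this is a smooth function with $\operatorname{supp}\mu_{\eps}\subset[a-\eps,b+\eps]$, and $\|\mu_{\eps}\|_{L^{1}(\R)}\le|\tilde{\mu}|(\R)=|\mu|(I)$, since $|\rho_{\eps}\ast\tilde{\mu}|\le\rho_{\eps}\ast|\tilde{\mu}|$ and $\int_{\R}(\rho_{\eps}\ast|\tilde{\mu}|)\,dx=|\tilde{\mu}|(\R)$.

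Next I would fix a bounded interval $J=(a',b')$ with $a'<a<b<b'$. For every $\eps<\min\{a-a',\,b'-b\}$ one has $\mu_{\eps}\in L^{1}(J)$ with $\operatorname{supp}\mu_{\eps}\subset J$, and, for all $x\in\R$, $I^{s}_{a'+}[\mu_{\eps}](x)=((KH)\ast\mu_{\eps})(x)=\bigl(\rho_{\eps}\ast((KH)\ast\tilde{\mu})\bigr)(x)$, the last equality being the associativity of convolution (the relevant integrals being absolutely convergent, so that Fubini applies). Applying Lemma \ref{lem:weakpq} on the interval $J$ — whose constant $C_{s}$ is independent of the interval, being inherited from the weak type estimate for the Riesz potential on $\R$ — we get, for all $t>0$,
\[
\Leb{1}\bigl(\{x\in J:\ |I^{s}_{a'+}[\mu_{\eps}](x)|>t\}\bigr)\le C_{s}\left(\frac{\|\mu_{\eps}\|_{L^{1}(J)}}{t}\right)^{\frac{1}{1-s}}\le C_{s}\left(\frac{|\mu|(I)}{t}\right)^{\frac{1}{1-s}}.
\]
Since $(KH)\ast\tilde{\mu}\in L^{1}_{\mathrm{loc}}(\R)$, the standard fact that $\rho_{\eps}\ast g\to g$ at every Lebesgue point of $g\in L^{1}_{\mathrm{loc}}(\R)$, combined with $(KH)\ast\tilde{\mu}=I^{s}_{a+}[\mu]$ a.e.\ on $I$, gives $I^{s}_{a'+}[\mu_{\eps}]\to I^{s}_{a+}[\mu]$ $\Leb{1}$-a.e.\ on $I$, say along a sequence $\eps_{k}\to0$.

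To conclude, fix $t>0$. For $\Leb{1}$-a.e.\ $x\in I$ with $|I^{s}_{a+}[\mu](x)|>t$ we have $|I^{s}_{a'+}[\mu_{\eps_{k}}](x)|>t$ for all $k$ large, hence $\chi_{\{|I^{s}_{a+}[\mu]|>t\}}\le\liminf_{k}\chi_{\{|I^{s}_{a'+}[\mu_{\eps_{k}}]|>t\}}$ $\Leb{1}$-a.e.\ on $I$; integrating over $I$ and applying Fatou's lemma together with the previous bound yields
\[
\Leb{1}\bigl(\{x\in I:\ |I^{s}_{a+}[\mu](x)|>t\}\bigr)\le\liminf_{k}\Leb{1}\bigl(\{x\in J:\ |I^{s}_{a'+}[\mu_{\eps_{k}}](x)|>t\}\bigr)\le C_{s}\left(\frac{|\mu|(I)}{t}\right)^{\frac{1}{1-s}},
\]
which is the desired bound; the continuity of $I^{s}_{a+}\colon\M(I)\to L^{\frac{1}{1-s},\infty}(I)$ then follows from it and the linearity of $I^{s}_{a+}$ on $\M(I)$ recorded in Proposition \ref{result:measure_fract_int}. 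The only point requiring some care is the identity $I^{s}_{a'+}[\mu_{\eps}]=\rho_{\eps}\ast((KH)\ast\tilde{\mu})$ and the ensuing a.e.\ convergence on $I$ — i.e.\ checking that mollifying the measure commutes with the singular (yet locally integrable) kernel $KH$ and that the limit is indeed $I^{s}_{a+}[\mu]$; the $L^{1}$-norm control of $\mu_{\eps}$, the invocation of Lemma \ref{lem:weakpq} and the Fatou passage to the limit are routine. Alternatively, one could avoid enlarging the interval by first establishing the weak type $\bigl(1,\tfrac{1}{1-s}\bigr)$ bound for the Riesz potential of a finite measure on $\R$ (again by mollification and the same lower semicontinuity argument) and then repeating the proof of Lemma \ref{lem:weakpq} verbatim with $|\tilde{\mu}|$ in place of $|\tilde{u}|H_{a}$.
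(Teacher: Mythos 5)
Your proof is correct and follows the same overall strategy as the paper's: zero-extend $\mu$ to $\tilde{\mu}$, mollify, apply the $L^{1}$ weak-type bound of Lemma \ref{lem:weakpq} to the mollifications, and pass to the limit via a.e.\ convergence and the lower semicontinuity of the distribution function. The one genuine difference is how you handle the commutation of mollification with the fractional integral. The paper keeps the base point at $a$, so that $I^{s}_{a+}[\rho_{\eps}\ast\tilde{\mu}]$ differs from $\rho_{\eps}\ast I^{s}_{a+}[\tilde{\mu}]$ by the boundary term $\bigl(\chi_{(a-\eps,a)}(\rho_{\eps}\ast\tilde{\mu})\bigr)\ast(KH)$, which it must then show tends to $0$ in $L^{1}(I)$ (using that $|\mu|((a,a+\eps))\to 0$). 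You instead shift the base point to $a'<a$ and work on an enlarged interval $J\supset\supset I$: once $\eps<\min\{a-a',b'-b\}$ the support of $\rho_{\eps}\ast\tilde{\mu}$ lies inside $J$, the identity $I^{s}_{a'+}[\rho_{\eps}\ast\tilde{\mu}]=\rho_{\eps}\ast\bigl((KH)\ast\tilde{\mu}\bigr)$ holds exactly, and the error term disappears altogether. This is a legitimate simplification: it relies on the (correct) observation that the constant $C_{s}$ in Lemma \ref{lem:weakpq} is inherited from the weak type estimate for the Riesz potential on all of $\R$ and is therefore independent of the interval, so enlarging $I$ to $J$ costs nothing; the paper's version, in exchange, never leaves the original interval and makes explicit which boundary contribution is being discarded. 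Both arguments are complete; yours trades the $L^{1}$ estimate of the boundary convolution for a routine verification of associativity of the (absolutely convergent) triple convolution, which you correctly flag as the only point needing care.
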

	\begin{proof}
Given $\mu \in \mathcal{M}(I)$, we denote by $\tilde{\mu}$ its zero extension to the whole $\R$; that is, the measure defined by $$\tilde{\mu}(B) = \mu(B \cap I) \ \text{ for all Borel sets } B \subset \R.$$
As in the proof of Lemma \ref{lem:weakpq}, we set $H(x):=\chi_{(0,+\infty)}(x)$ and $\displaystyle K(x):=\frac{1}{\Gamma(s) |x|^{1-s}}$.	
It is then obvious that	
\begin{equation*}
|\mu|(I) = |\tilde{\mu}|(\R).
\end{equation*}
It is also clear that, for $\Leb{1}$-a.e. $x\in I$,
\begin{equation} \label{eq:id_tilde_mu_fract_int}
		I^s_{a+}[\mu](x)=I^s_{a+}[\tilde{\mu}](x) = (\tilde{\mu}\ast(K H))(x).
\end{equation}
Let now $\rho \in C^{\infty}_{c}((-1, 1))$ be a standard mollifier. Thanks to \cite[Theorem 2.2]{AFP}, we have
\begin{equation} \label{eq:conv_tot_var_easy_est}
\|\rho_{\eps} \ast \tilde{\mu}\|_{L^{1}(\R)} \le |\tilde{\mu}|(\R) = |\mu|(I),
\end{equation}
while, by \eqref{eq:fract_int_conv_equiv} and the standard associativity properties of convolution, for all $x \in I$ we get
\begin{align} 
I^{s}_{a+}[\rho_{\eps} \ast \tilde{\mu}](x) & = ( (H_{a}(\rho_{\eps} \ast \tilde{\mu})) \ast (K H))(x) \nonumber \\
& = (\rho_{\eps} \ast \tilde{\mu} \ast (K H))(x) + ( ((H_{a} - 1)(\rho_{\eps} \ast \tilde{\mu}))\ast (K H))(x) \nonumber \\
& = (\rho_{\eps} \ast I_{a+}^{s}[\tilde{\mu}])(x) - ( ( \chi_{(a - \eps, a)}(\rho_{\eps} \ast \tilde{\mu}))\ast (K H))(x), \label{eq:ass_prop_conv}
\end{align}
since $(\rho_{\eps} \ast \tilde{\mu})(x) = 0$ for all $x < a - \eps$, and $(1 - \chi_{(a, +\infty)}) \chi_{(a - \eps, +\infty)} = \chi_{(a - \eps, a)}$.
By \eqref{eq:weakpq} and \eqref{eq:conv_tot_var_easy_est}, there exists $C_s > 0$ such that, for all $\eps, t>0$, we get
\begin{equation*}
\Leb{1}\left(\left\{x\in I: |I^s_{a+}[\rho_{\eps} \ast \tilde{\mu}](x)|>t \right\}\right) \le C_s\left(\frac{\|\rho_{\eps} \ast \tilde{\mu}\|_{L^{1}(I)}}{t}\right)^{\frac{1}{1-s}} \le C_{s} \left ( \frac{|\mu|(I)}{t} \right )^{\frac{1}{1-s}}
\end{equation*}
Finally, we recall that there exists a suitable subsequence $\eps_{k} \to 0$ such that 
\begin{equation*}
(\rho_{\eps_{k}} \ast I^{s}_{a+}[\tilde{\mu}])(x) \to I^{s}_{a+}[\tilde{\mu}](x) = I^{s}_{a+}[\mu](x) \ \text{ for }\Leb{1}\text{-a.e. } x \in I,
\end{equation*} 
since $I^{s}_{a+}[\mu] \in L^{1}(I)$, by Proposition \ref{result:measure_fract_int}, and by \eqref{eq:id_tilde_mu_fract_int}. As for the term $( \chi_{(a - \eps, a)}(\rho_{\eps} \ast \tilde{\mu}))\ast (K H)$, we notice that it converges to zero in $L^1(I)$, since 
\begin{align*}
\|( \chi_{(a - \eps, a)}(\rho_{\eps} \ast \tilde{\mu}))\ast (K H)\|_{L^{1}(I)} & \le \int_{a}^{b} \int_{\R} \chi_{(a - \eps, a)}(y) (\rho_{\eps} \ast |\tilde{\mu}|)(y) \frac{\chi_{(- \infty, x)}(y)}{\Gamma(s) |x - y|^{1 - s}} \, dy dx \\
& = \int_{a - \eps}^{a} \int_{a}^{b} (\rho_{\eps} \ast |\tilde{\mu}|)(y) \frac{1}{\Gamma(s) (x - y)^{1 - s}} \, dx dy \\
& = \frac{1}{\Gamma(s + 1)} \int_{a - \eps}^{a} (\rho_{\eps} \ast |\tilde{\mu}|)(y) \left ( (b - y)^{s} - (a - y)^{s} \right ) \, dy \\
& \le \frac{(b - a + \eps)^{s}}{\Gamma(s + 1)} \int_{a - \eps}^{a} \int_{y - \eps}^{y + \eps} \rho_{\eps}(z - y) \, d |\tilde{\mu}|(z) \, dy \\
& = \frac{(b - a + \eps)^{s}}{\Gamma(s + 1)} \int_{a - 2\eps}^{a + \eps} \int_{z - \eps}^{z + \eps} \rho_{\eps}(z - y) \, \, dy \, d |\tilde{\mu}|(z)  \\
& = \frac{(b - a + \eps)^{s}}{\Gamma(s + 1)} |\tilde{\mu}|((a - 2 \eps, a + \eps)) \\
& = \frac{(b - a + \eps)^{s}}{\Gamma(s + 1)} |\mu|((a, a + \eps)) \to 0
\end{align*}
as $\eps \to 0$. Hence, up to passing to another subsequence, we obtain that
\begin{align*}
I^{s}_{a+}[\rho_{\eps_{k}} \ast \tilde{\mu}](x) & = (\rho_{\eps_{k}} \ast I_{a+}^{s}[\tilde{\mu}])(x) - ( ( \chi_{(a - \eps_{k}, a)}(\rho_{\eps_{k}} \ast \tilde{\mu}))\ast (K H))(x) \\
& \to I^{s}_{a+}[\mu](x) \text{ for } \Leb{1}\text{-a.e. } x \in I.
\end{align*}
Hence, exploiting the lower semicontinuity of the distribution function (see \cite[Exercise 1.1.1]{G14-C}) and \eqref{eq:ass_prop_conv}, we get
\begin{align*}
\Leb{1}\left(\left\{x\in I: |I^s_{a+}[\mu](x)|>t \right\}\right) & \le \liminf_{k \to \infty} \Leb{1}\left(\left\{x\in I: |I^s_{a+}[\rho_{\eps_{k}} \ast \tilde{\mu}](x)|>t \right\}\right) \\
& \le C_{s} \left ( \frac{|\mu|(I)}{t} \right )^{\frac{1}{1-s}},
\end{align*}
and this ends the proof.
\end{proof}

We notice that, as a byproduct of the proof of \cite[Theorem 3.3]{BLNT}, formula \eqref{eq:RLecap} has been extended to the case of Sobolev functions. Now, we generalize this relation to the case of $BV$ functions, and, by doing so, we also immediately prove the inclusion of $BV(I)$ in $W^{s, 1}_{RL,a+}(I)$.

\begin{theorem} \label{result:BV_W_s1_RL} 
Let $u\in BV(I)$. Then, for all $s \in (0, 1)$, we have $u \in W^{s,1}_{RL,a+}(I)$ with
\begin{equation} \label{eq:repr_form_BV}
D^s_{a+}[u](x)= I^{1-s}_{a+}[Du](x)+\frac{1}{\Gamma\left(1 - s\right)} \frac{u(a+)}{(x-a)^{s}}.
\end{equation}
In particular, we have $BV(I) \hookrightarrow W^{s, 1}_{RL,a+}(I)$ for all $s \in (0, 1)$, with
\begin{equation} \label{eq:BV_W_s1_RL_emb}
\|u\|_{W^{s, 1}_{RL,a+}(I)} \le \max \left \{ 1 + \frac{(b - a)^{-s}}{\Gamma(2 - s)}, \frac{2 (b - a)^{1 - s}}{\Gamma(2 - s)} \right \} \|u\|_{BV(I)}.
\end{equation}
In addition, 
\begin{equation}
\label{eq:convinmeas}
D^s_{a+}[u] \Leb{1} \weakto Du + u(a+)\delta_a\quad\text{as}\quad s\to 1^-\quad\text{in}\quad \M(\overline{I}).
\end{equation}
\begin{proof}
By \cref{rem:frac_int_well_posed}, we obtain immediately that $I_{a+}^{1 - s}[u] \in L^{1}(I)$, since $u \in L^{1}(I)$. Let us now assume that $u \in AC(\overline{I})$. For all $x \in (a, b)$, formula \eqref{eq:RLecap} yields
\begin{equation*}
\frac{d}{dx} I_{a+}^{1 - s}[u](x) = I_{a+}^{1 - s}[u'](x) + \frac{1}{\Gamma\left(1 - s\right)} \frac{u(a)}{(x - a)^{s}}.
\end{equation*}
Now, let $u \in BV(I)$ and $\tilde{u}$ be its zero extension to $\R$ given by \eqref{eq:extension_zero_BV}. Let $\rho \in C^{\infty}_{c}((-1, 1))$ be a standard mollifier. It is well known that $\rho_{\eps} \ast \tilde{u} \in C^{\infty}(I) \cap BV(I)$, so that $\rho_{\eps} \ast \tilde{u} \in W^{1, 1}(I) \emb AC(\overline{I})$, in particular.
Then, for any $\phi \in C^{1}_c(I)$ we have
\begin{equation*}
\int_{a}^{b} I_{a+}^{1 - s} [\rho_{\eps} \ast \tilde{u}] \, \phi' \, dx = - \int_{a}^{b} \left (I_{a+}^{1 - s}[\rho_{\eps} \ast D \tilde{u}] + \frac{1}{\Gamma\left(1 - s\right)} \frac{(\rho_{\eps} \ast \tilde{u})(a)}{(x - a)^{s}} \right) \phi \, dx. 
\end{equation*}
By \eqref{eq:duality_fract_int}, we get
\begin{equation*}
\int_{a}^{b} I_{a+}^{1 - s}[\rho_{\eps} \ast D \tilde{u}] \, \phi \, dx = \int_{a}^{b} I_{b-}^{1 - s}[\phi] \, (\rho_{\eps} \ast D \tilde{u}) \, dx.
\end{equation*}
Then, since \cref{prop:cont_BLNT} applies also to $I^{1-s}_{b-}$ (thanks to Remark \ref{rem:reflection}), we have $I_{b-}^{1 - s}[\phi] \in C^{0}(\overline{I})$. Thanks to \eqref{eq:grad_zero_extension}, we obtain
\begin{equation*}
(\rho_{\eps} * D\tilde{u})(x) = (\rho_{\eps} * (Du \res I))(x) + u(a+) \rho_{\eps}(x - a) - u(b-) \rho_{\eps}(b - x).
\end{equation*}
Since $I_{b-}^{1 - s}[\phi] \in C^{0}(\overline{I})$, we have
\begin{align*}
\int_{a}^{b} I_{b-}^{1 - s}[\phi](x) u(a+) \rho_{\eps}(x - a) \, dx & \to \frac{u(a+)}{2}I_{b-}^{1 - s}[\phi](a) = \frac{u(a+)}{2\Gamma(1-s)} \int_{a}^{b}  \frac{\phi(x)}{(x - a)^s} dx
\end{align*}
and
\begin{align*}
\int_{a}^{b} I_{b-}^{1 - s}[\phi](x) u(b-) \rho_{\eps}(b - x) \, dx & \to \frac{u(b-)}{2} I_{b-}^{1 - s}[\phi](b) = 0,
\end{align*}
since $\int_{-1}^{1} \rho dx = 1$ and $\rho$ is even, and
\begin{equation*}
|I_{b-}^{1 - s}[\phi](x)| \le \frac{\|\phi\|_{L^{\infty}(I)}}{\Gamma(1-s)} \int_x^b \frac{1}{(t - x)^s} \, dt = \frac{\|\phi\|_{L^{\infty}(I)}}{\Gamma(2-s)} (b - x)^{1 - s} \to 0 \text{ as } x \to b-.
\end{equation*} 
Then, Fubini's theorem implies that
\begin{align*}
\int_{a}^{b} I_{b-}^{1 - s}[\phi](x) (\rho_{\eps} \ast (Du \res I))(x) \, dx & = \int_a^b \int_a^b  I_{b-}^{1 - s}[\phi](x) \rho_{\eps}(x - y) \, d Du(y) \, dx \\
& = \int_a^b \int_a^b  I_{b-}^{1 - s}[\phi](x) \rho_{\eps}(y - x) \, dx \, d Du(y) \\
& \to \int_{a}^{b} I_{b-}^{1 - s}[\phi](y) \, d Du(y), 
\end{align*}
where in the last step we employed well known convergence properties of mollifications of continuous functions.
All in all, we get
\begin{align*}
\int_{a}^{b} I_{b-}^{1 - s}[\phi] \, (\rho_{\eps} \ast D \tilde{u}) \, dx & = \int_{a}^{b} I_{b-}^{1 - s}[\phi] \, (\rho_{\eps} \ast (Du \res I)) \, dx + \\
& + \int_{a}^{b} I_{b-}^{1 - s}[\phi](x) \left ( u(a+) \rho_{\eps}(x - a) - u(b-) \rho_{\eps}(b - x) \right ) \, dx \\
& \to \int_{a}^{b} I_{b-}^{1 - s}[\phi] \, d Du + \frac{u(a+)}{2} I^{1-s}_{b-}[\phi](a) \\
& = \int_{a}^{b} \phi(x) \left (  I_{a+}^{1 - s}[Du](x) + \frac{1}{\Gamma\left(1 - s\right)} \frac{u(a+)}{2 (x - a)^{s}} \right ) \, dx.
\end{align*}
where the last equality follows from \eqref{eq:duality_fract_int_meas} and the definition of $I^{1-s}_{b-}$.
On the other hand, we also obtain
\begin{align*}
\int_{a}^{b} I_{a+}^{1 - s} [\rho_{\eps} \ast \tilde{u}] \, \phi' \, dx & = \int_{a}^{b} (\rho_{\eps} \ast \tilde{u}) \, I_{b-}^{1 - s}[\phi'] \, dx \\
& \to \int_{a}^{b} u \, I_{b-}^{1 - s}[\phi'] \, dx = \int_{a}^{b} I_{a+}^{1 - s}[u] \, \phi' \, dx,
\end{align*}
by \eqref{eq:duality_fract_int} and Lebesgue's dominated convergence theorem, since $I_{b-}^{1 - s}[\phi'] \in L^{1}(I)$ and 
\begin{equation*}
\|\rho_{\eps} \ast \tilde{u}\|_{L^{\infty}(I)} \le \|u\|_{L^{\infty}(I)} \le C_{a, b} \|u\|_{BV(I)},
\end{equation*}
by \eqref{eq:BV_bounded}.
Now, since $(\rho_{\eps} \ast \tilde{u})(a) \to \frac{u(a+)}{2}$ by \eqref{eq:extreme_moll_conv}, we get
\begin{align*}
\int_{a}^{b} I_{a+}^{1 - s}[u](x) \, \phi'(x) \, dx & = \lim_{\eps \to 0} \int_{a}^{b} I_{a+}^{1 - s} [\rho_{\eps} \ast \tilde{u}](x) \, \phi'(x) \, dx \\
& = - \lim_{\eps \to 0} \int_{a}^{b} I_{b-}^{1 - s}[\phi](x) \, (\rho_{\eps} \ast D \tilde{u})(x) + \frac{1}{\Gamma\left(1 - s\right)} \frac{(\rho_{\eps} \ast \tilde{u})(a)}{(x - a)^{s}} \, \phi(x) \, dx \\
& = - \int_{a}^{b} \phi(x) \left (  I_{a+}^{1 - s}[Du](x) + \frac{1}{\Gamma\left(1 - s\right)} \frac{u(a+)}{2 (x - a)^{s}} \right ) \, dx + \\
& - \frac{1}{\Gamma\left(1 - s\right)} \int_{a}^{b} \frac{u(a+)}{2(x - a)^{s}} \phi(x) \, dx \\
& = - \int_{a}^{b} \left ( I_{a+}^{1 - s}[Du](x) + \frac{1}{\Gamma\left(1 - s\right)} \frac{u(a+)}{(x - a)^{s}} \right ) \phi(x) \, dx,
\end{align*}
which yields \eqref{eq:repr_form_BV}. Thus, $D^s_{a+}u \in L^{1}(I)$, with
\begin{equation*}
\left \|D^s_{a+}u\right \|_{L^{1}(I)} \le \frac{(b - a)^{1 - s}}{\Gamma(2 - s)} (|D u|(I) + |u(a+)|)
\end{equation*}
by \eqref{eq:measure_frat_int}. Then, thanks to \eqref{eq:BV_bounded} and \eqref{eq:lim_ab_bound}, we get
\begin{align*}
\|u\|_{W^{s, 1}_{RL,a+}(I)} & = \|u\|_{L^{1}(I)} + \left \|D^{s}_{a+}u\right \|_{L^{1}(I)} \le  \|u\|_{L^{1}(I)} +  \frac{(b - a)^{1 - s}}{\Gamma(2 - s)} \left (|D u|(I) + |u(a+)| \right ) \\
& \le \|u\|_{L^{1}(I)} +  \frac{(b - a)^{1 - s}}{\Gamma(2 - s)} \left (|D u|(I) + \frac{1}{b - a} \|u\|_{L^{1}(I)} + |D u|(I) \right )  \\
& = \left ( 1 + \frac{(b - a)^{- s}}{\Gamma(2 - s)} \right ) \|u\|_{L^{1}(I)} + \frac{2 (b - a)^{1 - s}}{\Gamma(2 - s)} |D u|(I),
\end{align*}
which easily implies \eqref{eq:BV_W_s1_RL_emb} and the continuity of the embedding $BV(I) \hookrightarrow W^{s, 1}_{RL,a+}(I)$.

To prove the second part of the claim, we choose $\phi \in C^{1}(\overline{I})$ and exploit \eqref{eq:duality_fract_int_meas} and \eqref{eq:repr_form_BV} in order to obtain
\begin{align*}
\int_a^b D^s_{a+}[u](x)\phi(x)dx&=\int_a^b I^{1-s}_{a+}[Du](x)\phi(x)dx+\frac{u(a+)}{\Gamma(1-s)}\int_a^b\frac{\phi(x)}{(x-a)^s}dx \\
&=\int_a^b I^{1-s}_{b-}[\phi](x)dDu(x)+ \\
& + \frac{u(a+)}{\Gamma(2-s)}\left(\phi(b)(b-a)^{1-s}-\int_a^b\phi'(x)(x-a)^{1-s}dx\right).
\end{align*}
Therefore, by \cref{lem:fractional_integral_s_0} and Lebesgue's dominated convergence theorem, we get
$$
\lim_{s\to 1^-}\int_a^b D^s_{a+}[u](x)\phi(x)dx=\int_a^b\phi(x)dDu(x)+u(a+)\phi(a).
$$
Then, the claim plainly follows by the density of $C^1(\overline{I})$ in $C(\overline{I})$ with respect to the supremum norm.
\end{proof}
\end{theorem}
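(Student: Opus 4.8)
The plan is to establish the representation formula \eqref{eq:repr_form_BV} first for absolutely continuous $u$, where it is nothing but \eqref{eq:RLecap} read as a statement about the weak derivative of $I^{1-s}_{a+}[u]$, and then to pass to a general $u\in BV(I)$ by mollification. Concretely, I would extend $u$ by zero to $\tilde u\in BV(\R)$ as in \eqref{eq:extension_zero_BV}, fix a standard mollifier $\rho$, and observe that $\rho_\eps\ast\tilde u\in C^\infty(I)\cap W^{1,1}(I)\emb AC(\overline I)$, so that for each $\eps>0$ one has pointwise on $(a,b)$
$$\frac{d}{dx}I^{1-s}_{a+}[\rho_\eps\ast\tilde u](x)=I^{1-s}_{a+}[\rho_\eps\ast D\tilde u](x)+\frac{1}{\Gamma(1-s)}\frac{(\rho_\eps\ast\tilde u)(a)}{(x-a)^s}.$$
Testing this identity against an arbitrary $\phi\in C^1_c(I)$ and letting $\eps\to0^+$ should yield \eqref{eq:repr_form_BV} in the distributional sense; since the right-hand side of \eqref{eq:repr_form_BV} lies in $L^1(I)$ by Remark \ref{rem:frac_int_well_posed} and Proposition \ref{result:measure_fract_int}, this gives $I^{1-s}_{a+}[u]\in W^{1,1}(I)$, i.e.\ $u\in W^{s,1}_{RL,a+}(I)$.

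For the limit I would compute $\int_a^b I^{1-s}_{a+}[\rho_\eps\ast\tilde u]\,\phi'\,dx$ in two ways. By the duality \eqref{eq:duality_fract_int} it equals $\int_a^b(\rho_\eps\ast\tilde u)\,I^{1-s}_{b-}[\phi']\,dx$, which converges to $\int_a^b u\,I^{1-s}_{b-}[\phi']\,dx=\int_a^b I^{1-s}_{a+}[u]\,\phi'\,dx$ by dominated convergence, using $\|\rho_\eps\ast\tilde u\|_{L^\infty(I)}\le\|u\|_{L^\infty(I)}$ from \eqref{eq:BV_bounded} and $I^{1-s}_{b-}[\phi']\in L^1(I)$. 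On the other hand, inserting the $AC$ formula and using \eqref{eq:duality_fract_int} once more on the power term, the same integral equals $-\int_a^b I^{1-s}_{b-}[\phi]\,(\rho_\eps\ast D\tilde u)\,dx-\frac{1}{\Gamma(1-s)}\int_a^b\frac{(\rho_\eps\ast\tilde u)(a)}{(x-a)^s}\phi(x)\,dx$. Now \eqref{eq:grad_zero_extension}, i.e.\ $\rho_\eps\ast D\tilde u=\rho_\eps\ast(Du\res I)+u(a+)\rho_\eps(\cdot-a)-u(b-)\rho_\eps(b-\cdot)$, splits the first integral into a bulk term converging — by Fubini and standard mollifier convergence of the function $I^{1-s}_{b-}[\phi]$, continuous on $\overline I$ by Proposition \ref{prop:cont_BLNT} via Remark \ref{rem:reflection} — to $\int_a^b I^{1-s}_{b-}[\phi]\,dDu=\int_a^b I^{1-s}_{a+}[Du]\,\phi\,dx$ (the last equality being \eqref{eq:duality_fract_int_meas}), a boundary term at $a$ tending to $\tfrac{u(a+)}{2}I^{1-s}_{b-}[\phi](a)$, and a boundary term at $b$ tending to $\tfrac{u(b-)}{2}I^{1-s}_{b-}[\phi](b)=0$ because $I^{1-s}_{b-}[\phi](b)=0$. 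Combining these with $(\rho_\eps\ast\tilde u)(a)\to\tfrac{u(a+)}{2}$ from \eqref{eq:extreme_moll_conv}, the two $\tfrac12$-contributions at $a$ add up and produce exactly \eqref{eq:repr_form_BV}.

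The norm bound \eqref{eq:BV_W_s1_RL_emb} then follows routinely: \eqref{eq:measure_frat_int} applied to \eqref{eq:repr_form_BV} gives $\|D^s_{a+}[u]\|_{L^1(I)}\le\frac{(b-a)^{1-s}}{\Gamma(2-s)}(|Du|(I)+|u(a+)|)$, and \eqref{eq:BV_bounded} together with \eqref{eq:lim_ab_bound} controls $|u(a+)|$ by $\max\{1,(b-a)^{-1}\}\|u\|_{BV(I)}$; collecting the $L^1$- and total-variation contributions yields the stated constant and the continuity of $BV(I)\emb W^{s,1}_{RL,a+}(I)$. For the asymptotics \eqref{eq:convinmeas} I would test \eqref{eq:repr_form_BV} against $\phi\in C^1(\overline I)$, rewrite $\int_a^b I^{1-s}_{a+}[Du]\,\phi\,dx=\int_a^b I^{1-s}_{b-}[\phi]\,dDu$ by \eqref{eq:duality_fract_int_meas}, and integrate the power term by parts to get $\frac{u(a+)}{\Gamma(2-s)}\bigl(\phi(b)(b-a)^{1-s}-\int_a^b\phi'(x)(x-a)^{1-s}\,dx\bigr)$. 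Lemma \ref{lem:fractional_integral_s_0} (with exponent $1-s\to0^+$) gives $I^{1-s}_{b-}[\phi]\to\phi$, and by dominated convergence the power term tends to $u(a+)\bigl(\phi(b)-\int_a^b\phi'(x)\,dx\bigr)=u(a+)\phi(a)$, whence $\int_a^b D^s_{a+}[u]\,\phi\,dx\to\int_a^b\phi\,dDu+u(a+)\phi(a)$; density of $C^1(\overline I)$ in $C(\overline I)$ upgrades this to weak-$\ast$ convergence in $\M(\overline I)$.

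The step I expect to demand the most care is the mollification limit, specifically the bookkeeping of the half-factors: the zero extension makes $\rho_\eps\ast D\tilde u$ concentrate symmetrically across the endpoints, so both the Dirac term at $a$ in \eqref{eq:grad_zero_extension} and the boundary value $(\rho_\eps\ast\tilde u)(a)$ contribute only $u(a+)/2$ in the limit, and one must check these two halves genuinely combine into the single coefficient $u(a+)$ of \eqref{eq:repr_form_BV} while the $\delta_b$ contribution truly vanishes via $I^{1-s}_{b-}[\phi](b)=0$. Justifying the Fubini interchange and the convergence of the bulk term (using continuity of $I^{1-s}_{b-}[\phi]$ up to $\overline I$ to integrate against $|Du|$) and taming the integrable singularity $(x-a)^{-s}$ near $a$ are the remaining technical points, all controlled by the $L^1$-estimates of Remark \ref{rem:frac_int_well_posed} and Proposition \ref{result:measure_fract_int}.
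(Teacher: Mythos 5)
Your proposal follows essentially the same route as the paper's own proof: the $AC$ case via \eqref{eq:RLecap}, zero extension and mollification, the duality \eqref{eq:duality_fract_int} to transfer the fractional integral onto the test function, the decomposition \eqref{eq:grad_zero_extension} with the two half-contributions at $a$ recombining into the single coefficient $u(a+)$ and the $\delta_b$ term killed by $I^{1-s}_{b-}[\phi](b)=0$, then the same norm estimate and the same duality-plus-integration-by-parts argument for the asymptotics as $s\to1^-$. The argument is correct and no further comparison is needed.
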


	\begin{lemma}\label{lem:density}
	Let $s\in(0,1)$. If $u\in W^{s,1}(I)$, then $D^{s}_{a+}[u]$ is well defined, belongs to $L^{1}(I)$ and $D^{s}_{a+}[u](x) = {}^M D^s_{a+}[u](x)$ for $\Leb{1}$-a.e. $x\in I$.
	\end{lemma}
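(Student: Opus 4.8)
The plan is to reduce the statement to the smooth dense class and then pass to the limit. By Remark \ref{rem:densitàdellecc1} the space $C^1_c(I)$ is dense in $W^{s,1}(I)$ (the hypothesis $sp<1$ of Theorem \ref{th:densityluca} is automatic for $p=1$), and every $v\in C^1_c(I)$ belongs to $AC(\overline I)$ with $v(a)=0$, so formula \eqref{eq:Marchaud_equiv} already gives $D^s_{a+}[v]={}^M D^s_{a+}[v]$ pointwise, with $I^{1-s}_{a+}[v]\in W^{1,1}(I)$ having this function as its weak derivative. It thus suffices to show that the two sides of this identity depend continuously on $u$ with respect to the $W^{s,1}$-topology.

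The key estimate is the boundedness of the linear operator ${}^M D^s_{a+}\colon W^{s,1}(I)\to L^1(I)$. Splitting
\begin{equation*}
{}^M D^s_{a+}[u](x)=\frac{1}{\Gamma(1-s)}\frac{u(x)}{(x-a)^s}+\frac{s}{\Gamma(1-s)}\int_a^x\frac{u(x)-u(t)}{(x-t)^{s+1}}\,dt,
\end{equation*}
I would bound the first term using the fractional Hardy inequality of Lemma \ref{lem:hardy} (again valid for $p=1$ since $s<1$): because $x-a\ge|\delta_I(x)|$ for all $x\in I$,
\begin{equation*}
\int_a^b\frac{|u(x)|}{(x-a)^s}\,dx\le\int_a^b\frac{|u(x)|}{|\delta_I(x)|^s}\,dx\le c\,\|u\|_{W^{s,1}(I)},
\end{equation*}
while for the second term Tonelli's theorem gives
\begin{equation*}
\int_a^b\int_a^x\frac{|u(x)-u(t)|}{(x-t)^{s+1}}\,dt\,dx\le\int_a^b\int_a^b\frac{|u(x)-u(t)|}{|x-t|^{s+1}}\,dt\,dx=[u]_{W^{s,1}(I)}.
\end{equation*}
In particular the Marchaud integral converges for $\Leb1$-a.e. $x\in I$, so ${}^M D^s_{a+}[u]$ is well defined and belongs to $L^1(I)$, with $\|{}^M D^s_{a+}[u]\|_{L^1(I)}\le C_{s,a,b}\|u\|_{W^{s,1}(I)}$ for every $u\in W^{s,1}(I)$.

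To conclude, I would take $u_n\in C^1_c(I)$ with $u_n\to u$ in $W^{s,1}(I)$; then $u_n\to u$ in $L^1(I)$, so $I^{1-s}_{a+}[u_n]\to I^{1-s}_{a+}[u]$ in $L^1(I)$ by the continuity of $I^{1-s}_{a+}$ on $L^1$ (Proposition \ref{prop:cont_BLNT}(1)), and ${}^M D^s_{a+}[u_n]\to{}^M D^s_{a+}[u]$ in $L^1(I)$ by the estimate above applied to $u_n-u$. Since each $I^{1-s}_{a+}[u_n]\in W^{1,1}(I)$ has weak derivative $D^s_{a+}[u_n]={}^M D^s_{a+}[u_n]$, letting $n\to\infty$ in the identity $\int_a^b I^{1-s}_{a+}[u_n]\,\phi'\,dx=-\int_a^b{}^M D^s_{a+}[u_n]\,\phi\,dx$ for $\phi\in C^1_c(I)$ shows, by closedness of the distributional derivative under $L^1$-convergence, that $I^{1-s}_{a+}[u]\in W^{1,1}(I)$ with weak derivative ${}^M D^s_{a+}[u]$; this is precisely the claim (and yields $u\in W^{s,1}_{RL,a+}(I)$ as well). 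The only genuine difficulty is controlling the boundary term $u(x)/(x-a)^s$, which is singular as $x\to a^+$, in the $L^1$-norm — this is exactly what the fractional Hardy inequality supplies, and it is crucial that it holds at the endpoint exponent $p=1$; the identification on the smooth class, Tonelli, and the closedness step are all routine.
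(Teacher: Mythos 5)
Your proposal is correct and follows essentially the same route as the paper: density of $C^1_c(I)$ in $W^{s,1}(I)$, the identification $D^s_{a+}={}^M D^s_{a+}$ on the smooth class via \eqref{eq:Marchaud_equiv}, the fractional Hardy inequality of Lemma \ref{lem:hardy} for the term $u(x)/(x-a)^s$, the Gagliardo seminorm bound for the Marchaud integral term, and passage to the limit in the weak formulation using the $L^1$-continuity of $I^{1-s}_{a+}$. The only cosmetic difference is that you package the convergence as boundedness of the linear map ${}^M D^s_{a+}\colon W^{s,1}(I)\to L^1(I)$ applied to $u_n-u$, while the paper estimates the two terms of ${}^M D^s_{a+}[u_n-u]$ separately and also records pointwise a.e.\ convergence along a subsequence; the substance is identical.
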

	\begin{proof}
		If $u\in W^{s,1}(I)\cap AC(\overline{I})$, the computations already done in \eqref{eq:Marchaud_equiv} hold true.
		
		Otherwise, if $u\in W^{s,1}(I)$, we exploit the density of $C_c^1(I)$ in $W^{s,1}(I)$ (Remark \ref{rem:densitàdellecc1}), which means that there exists a sequence $u_n$ in $C_c^1(I)$ such that $\left\|u_n-u\right\|_{W^{s,1}(I)}\to 0$ as $n \to + \infty$. Now, we prove that
\begin{equation} \label{eq:Marchaud_repr_smooth_seq}
		D^s_{a+}[u_n](x)=\frac{1}{\Gamma(1-s)}\frac{u_n(x)}{(x-a)^s}+\frac{s}{\Gamma(1-s)}\int_a^x\frac{u_n(x)-u_n(t)}{(x-t)^{s+1}}dt
\end{equation}
		converges in $L^{1}(I)$ and, up to a subsequence, pointwise $\Leb{1}$-a.e. in $I$ to $D^s_{a+}[u](x)$.
		
		For the second term in the right hand side of \eqref{eq:Marchaud_repr_smooth_seq}, we proceed as follows: we set
		$$
		f_n(x):=\int_a^x\frac{u_n(x)-u_n(t)-u(x)+u(t)}{(x-t)^{s+1}}dt.
		$$
		The sequence $f_n$ converges to 0 in $L^1(I)$. Indeed
		$$
		\int_a^b|f_n(x)|dx\leq[u_n-u]_{W^{s,1}(I)}\leq\left\|u_n-u\right\|_{W^{s,1}(I)}\to 0\quad\text{as}\quad n \to + \infty.
		$$
		Therefore, up to a subsequence, $f_n$ converges pointwise $\Leb{1}$-a.e. to 0 in $I$, so that
		$$
		\lim_{n \to + \infty}\int_a^x\frac{u_n(x)-u_n(t)}{(x-t)^{s+1}} \, dt =\int_a^x\frac{u(x)-u(t)}{(x-t)^{s+1}} \, dt
		$$
		for $\Leb{1}$-a.e. $x \in I$.
		Conversely, for the first term in the right hand side of \eqref{eq:Marchaud_repr_smooth_seq}, up to a subsequence, we have convergence $\Leb{1}$-a.e. in $I$ thanks to the convergence of $u_n$ to $u$ in $W^{s,1}(I)$ and hence in $L^1(I)$, which implies pointwise convergence $\Leb{1}$-a.e., up to a subsequence. 
		
		For the $L^1$ convergence, we argue as follows: 	
		employing the fractional Hardy inequality, \cref{lem:hardy}, with $p=1$, we get
		$$
		\int_a^b\frac{|u_n(x)-u(x)|}{(x-a)^s}dx\leq\int_a^b\frac{|u_n(x)-u(x)|}{|\delta_I(x)|^s}dx\leq C\left\|u_n-u\right\|_{W^{s,1}(I)}\to 0\quad\text{as}\quad n \to + \infty.
		$$
		To conclude, we notice that, for any $\phi\in C^1_c(I)$ it holds that
		\begin{align*}
		\int_a^b {}^M D^s_{a+}[u_n](x)\phi(x)dx & =\int_a^b D^s_{a+}[u_n](x)\phi(x)dx \\ & =-\int_a^b I^{1-s}_{a+}[u_n](x)\phi'(x)dx \rightarrow-\int_a^b I^{1-s}_{a+}[u](x)\phi'(x)dx,
		\end{align*}
		since $u_n\rightarrow u$ in $L^1(I)$ and $I^{1-s}_{a+}$ is continuous from $L^1(I)$ to $L^1(I)$.
		On the other hand, we have just proved that ${}^M D^s_{a+}[u_n]\to {}^M D^s_{a+}[u]$ in $L^1(I)$, and so we conclude
		$$
		\int_a^b {}^M D^s_{a+}[u](x)\phi(x)dx=-\int_a^b I^{1-s}_{a+}[u](x)\phi'(x)dx,
		$$
		and this implies $u\in W^{s,1}_{RL,a+}(I)$ with ${}^M D^s_{a+}[u](x)=D^s_{a+}[u](x)$ for a.e. $x\in I$.
	\end{proof}

\begin{remark}
	We notice that H\"older's inequality cannot be exploited in order to estimate the term $$\int_a^b\frac{|u_n(x)-u(x)|}{(x-a)^s}dx$$ in the proof of \cref{lem:density}, so that we need to employ the fractional Hardy inequality of \cref{lem:hardy}. Indeed, since $u_n-u\in W^{s,1}(I)$, the fractional Sobolev embedding Theorem (see {\it e.g.} \cite[Theorem 6.7.]{MR2944369}) implies that  $u_n-u\in L^q(I)$ for any $q\in\left[1,\frac{1}{1-s}\right]$.  
	
	Therefore, we get
	$$
	\int_a^b\frac{|u_n(x)-u(x)|}{(x-a)^s}dx\leq\left(\int_a^b|u_n-u|^q dx\right)^{1/q}\left(\int_a^b\frac{dx}{(x-a)^{sq'}}\right)^{1/q'}.
	$$
	Now, $q \le \frac{1}{1-s}$ implies $sq' \ge 1$, and so
	$$
	\int_a^b\frac{dx}{(x-a)^{sq'}} = + \infty,
	$$
	and thus this estimate is not useful.
\end{remark}

\begin{proposition}
\label{prop:propopop}
For all $s\in(0,1)$ the embedding
$W^{s,1}(I)\hookrightarrow W^{s,1}_{RL,a+}(I)$ is continuous.
\end{proposition}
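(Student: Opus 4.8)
The plan is to notice that the \emph{inclusion} $W^{s,1}(I)\subset W^{s,1}_{RL,a+}(I)$, together with the pointwise identity $D^s_{a+}[u]={}^M D^s_{a+}[u]$ $\Leb{1}$-a.e., is already the content of Lemma \ref{lem:density}; hence all that remains is to exhibit a constant $C=C(s,a,b)>0$ such that $\|u\|_{W^{s,1}_{RL,a+}(I)}\le C\|u\|_{W^{s,1}(I)}$ for every $u\in W^{s,1}(I)$. By the definition \eqref{eq:norminrimliu} of the norm, this reduces to estimating the three quantities $\|u\|_{L^1(I)}$, $\|I^{1-s}_{a+}[u]\|_{L^1(I)}$ and $\|D^s_{a+}[u]\|_{L^1(I)}$ in terms of $\|u\|_{W^{s,1}(I)}$.

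The first quantity is trivially bounded by $\|u\|_{W^{s,1}(I)}$, and the second is bounded by $\frac{(b-a)^{1-s}}{\Gamma(2-s)}\|u\|_{L^1(I)}$ by the computation in Remark \ref{rem:frac_int_well_posed} applied with $1-s$ in place of $s$. For the third one I would invoke the Marchaud representation of Lemma \ref{lem:density}, that is $D^s_{a+}[u](x)=\frac{1}{\Gamma(1-s)}\frac{u(x)}{(x-a)^s}+\frac{s}{\Gamma(1-s)}\int_a^x\frac{u(x)-u(t)}{(x-t)^{s+1}}\,dt$, and estimate $\|D^s_{a+}[u]\|_{L^1(I)}$ by the sum of the \emph{boundary term} $\frac{1}{\Gamma(1-s)}\int_a^b\frac{|u(x)|}{(x-a)^s}\,dx$ and the \emph{Gagliardo term} $\frac{s}{\Gamma(1-s)}\int_a^b\int_a^x\frac{|u(x)-u(t)|}{(x-t)^{s+1}}\,dt\,dx$; by Tonelli's theorem and the very definition of the seminorm, the latter is at most $\frac{s}{\Gamma(1-s)}[u]_{W^{s,1}(I)}$.

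The crucial step is the boundary term. Since $x-a\ge|\delta_I(x)|={\rm dist}(x,\partial I)$ for all $x\in I$, one has $\int_a^b\frac{|u(x)|}{(x-a)^s}\,dx\le\int_a^b\frac{|u(x)|}{|\delta_I(x)|^s}\,dx$, and the right-hand side is controlled by $c(s,1,a,b)\,\|u\|_{W^{s,1}(I)}$ thanks to the fractional Hardy inequality, Lemma \ref{lem:hardy}, applied with $p=1$ --- which is legitimate because $sp=s<1$. As pointed out in the Remark following Lemma \ref{lem:density}, H\"older's inequality does \emph{not} suffice to bound this weighted integral, so Hardy's inequality is the indispensable ingredient here.

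Collecting the three estimates one obtains $\|u\|_{W^{s,1}_{RL,a+}(I)}\le\Big(1+\frac{(b-a)^{1-s}}{\Gamma(2-s)}+\frac{c(s,1,a,b)}{\Gamma(1-s)}\Big)\|u\|_{L^1(I)}+\frac{s}{\Gamma(1-s)}[u]_{W^{s,1}(I)}$, which yields the continuity of the embedding with $C$ equal to the larger of the two resulting constants. I do not expect any genuine obstacle beyond the bookkeeping of constants; the only subtlety worth flagging is precisely that H\"older is unavailable and Lemma \ref{lem:hardy} must be used instead.
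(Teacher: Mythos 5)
Your proof is correct and follows essentially the same route as the paper's: both rest on Lemma \ref{lem:density} to identify $D^s_{a+}[u]$ with the Marchaud derivative, bound the nonlocal term by the Gagliardo seminorm, and control the weighted term $\int_a^b |u(x)|(x-a)^{-s}\,dx$ via the fractional Hardy inequality of Lemma \ref{lem:hardy} with $p=1$. Your bookkeeping of the constants is slightly more explicit than the paper's, but there is no substantive difference.
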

\begin{proof}
Since $u\in L^{1}(I)$, in particular, we have $I^{1-s}_{a+}[u]\in L^1(I)$ by \cref{rem:frac_int_well_posed}.

Thanks to Lemma \ref{lem:density}, we have that the left Riemann-Liouville fractional derivative of $u$ coincides with the Marchaud one, and so
\begin{equation*}
D^s_{a+}[u](x)=\frac{1}{\Gamma(1-s)}\frac{u(x)}{(x-a)^s}+\frac{s}{\Gamma(1-s)}\int_a^x\frac{u(x)-u(t)}{(x-t)^{s+1}}dt \text{ for } \Leb{1}\text{-a.e } x \in I.
\end{equation*}
For the second term on the right hand side, it holds that
\begin{equation}
\label{eq:loplm}
\int_a^b \left|\int_a^x\frac{u(x)-u(t)}{(x-t)^{s+1}}dt\right| dx \leq [u]_{W^{s,1}(I)}.
\end{equation}
While for the first term, using Lemma \ref{lem:hardy} with $p=1$, we have
\begin{equation}
\label{eq:llllll}
\int_a^b\frac{|u(x)|}{(x-a)^s}dx \leq\int_a^b\frac{|u(x)|}{|\delta_I(x)|^s}dx\leq C\left\|u\right\|_{W^{s,1}(I)}
\end{equation}
for some $C = C(s, a, b) > 0$.

All in all, using \eqref{eq:measure_frat_int}, \eqref{eq:loplm} and \eqref{eq:llllll}, we obtain that there exists a positive constant $C=C(s, a, b)$ such that
\begin{equation*}
\left\|u\right\|_{W^{s,1}_{RL,a+}(I)}\leq C\left\|u\right\|_{W^{s,1}(I)}.
\end{equation*}
\end{proof}

We notice that, thanks to the continuous embedding $BV(I) \hookrightarrow W^{s, 1}(I)$ (see for instance \cite[Proposition 1.2.1]{TesiLuca}), \cref{prop:propopop} actually implies the continuous embedding $BV(I) \hookrightarrow W^{s, 1}_{RL, a+}(I)$, also given by \cref{result:BV_W_s1_RL}. However, the proofs of these two results exploit different techniques, both of them interesting in their own way.

\begin{remark} \label{rem:dubious}
	We notice that Proposition \ref{prop:propopop} does not hold for unbounded intervals. Indeed, the function $u(x):=\frac{1}{x^2}$ belongs to $W^{1,1}((1,+\infty))$, therefore $u\in W^{1/2,1}((1,+\infty))$, but we have 
	$$
	I^{1/2}_{1+}[u](x)=\frac{1}{\sqrt{\pi}}\left(\frac{\log(x)+2\log\left(1+\sqrt{\frac{x-1}{x}}\right)}{2x^{3/2}}+\frac{\sqrt{x-1}}{x}\right)\notin L^1((1,+\infty)).
	$$
This example shows also that the continuity of the fractional integral in $L^p$ for $1\le p \le 2$ fails for unbounded intervals.
\end{remark}

Actually, we can prove that the inclusion of \cref{prop:propopop} is strict.

\begin{proposition} \label{prop:strict_embedding}
For all $s \in (0, 1)$ the space $W^{s,1}_{RL,a+}(I)$ strictly contains $W^{s,1}(I)$, and so it strictly contains $BV(I)$.
\end{proposition}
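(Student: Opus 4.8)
The plan is to exhibit a single explicit function lying in $W^{s,1}_{RL,a+}(I)$ but not in $W^{s,1}(I)$; since $BV(I) \emb W^{s,1}(I)$ (see e.g. \cite[Proposition 1.2.1]{TesiLuca}) and $W^{s,1}(I) \subseteq W^{s,1}_{RL,a+}(I)$ by \cref{prop:propopop}, the same function witnesses both strict inclusions at once. The natural candidate is the critical power already studied in Example \ref{ex:critical_power}, namely $u(x) := \Gamma(s)^{-1}(x-a)^{s-1}$ for $x \in I$. First I would simply recall from Example \ref{ex:critical_power} that $u \in L^{p}(I)$ for every $p \in [1, \frac{1}{1-s})$, hence in particular $u \in L^{1}(I)$, and that $I^{1-s}_{a+}[u] \equiv 1$ on $I$; being constant, $I^{1-s}_{a+}[u] \in W^{1,1}(I)$, so by Definition \ref{def:W_sp_RL} we get $u \in W^{s,1}_{RL,a+}(I)$ (indeed with $D^{s}_{a+}[u] \equiv 0$). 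Thus the whole matter reduces to checking that $u \notin W^{s,1}(I)$.

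To show $u \notin W^{s,1}(I)$ I would argue by contradiction via the fractional Hardy inequality, \cref{lem:hardy}, applied with $p = 1$ (which is admissible since $sp = s < 1$): if $u$ were in $W^{s,1}(I)$, then $\int_{a}^{b} |u(x)|\,|\delta_I(x)|^{-s}\,dx \le c\,\|u\|_{W^{s,1}(I)} < \infty$. However, on the subinterval $(a,\tfrac{a+b}{2})$ one has $|\delta_I(x)| = x-a$, so the integrand there equals $\Gamma(s)^{-1}(x-a)^{s-1-s} = \Gamma(s)^{-1}(x-a)^{-1}$, which is not integrable near $a$; hence $\int_{a}^{b} |u(x)|\,|\delta_I(x)|^{-s}\,dx = +\infty$, a contradiction. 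Alternatively, and avoiding \cref{lem:hardy}, one can estimate the Gagliardo seminorm directly: after reducing to $\{a < x < y < b\}$ (where $x^{s-1} > y^{s-1}$) and substituting $x - a = t(y-a)$ with $t \in (0,1)$, the inner integral becomes $(y-a)^{-1}\int_{0}^{1}\frac{t^{s-1}-1}{(1-t)^{s+1}}\,dt$; the one–variable integral is a \emph{finite positive} constant (integrable at $t=0$ because $s>0$, and at $t=1$ because $t^{s-1}-1 \sim (1-s)(1-t)$ as $t\to 1^-$, so the integrand is $\sim (1-s)(1-t)^{-s}$ there, with $s<1$), while $\int_{a}^{b}(y-a)^{-1}\,dy = +\infty$; hence $[u]_{W^{s,1}(I)} = +\infty$.

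Finally, combining the two points: $u \in W^{s,1}_{RL,a+}(I)$ but $u \notin W^{s,1}(I)$, and since $BV(I) \subseteq W^{s,1}(I)$ we also have $u \notin BV(I)$; this proves that $W^{s,1}_{RL,a+}(I)$ strictly contains $W^{s,1}(I)$, and a fortiori $BV(I)$. I do not expect a genuine obstacle here — the decisive example is handed to us by Example \ref{ex:critical_power} — and the only mildly delicate point is the endpoint analysis in the second step: in the Hardy-based argument it is the divergence of $\int_a (x-a)^{-1}\,dx$, and in the direct argument the finiteness and strict positivity of the constant $\int_0^1 (t^{s-1}-1)(1-t)^{-s-1}\,dt$.
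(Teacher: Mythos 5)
Your proposal is correct, and it uses exactly the paper's witness function $u(x)=\Gamma(s)^{-1}(x-a)^{s-1}$ together with Example \ref{ex:critical_power} for the inclusion $u\in W^{s,1}_{RL,a+}(I)$. The only divergence is in how you rule out $u\in W^{s,1}(I)$: the paper computes the Gagliardo seminorm directly (normalizing $I$ to $(0,1)$ and substituting $x=yz$, then reading off the non-integrability of $1/y$ near the origin), which is essentially your ``alternative'' argument, just with the substitution $x-a=t(y-a)$ performed in place and the constant $\int_0^1 (t^{s-1}-1)(1-t)^{-s-1}\,dt$ checked to be finite and positive — your endpoint analysis at $t=1$ via $t^{s-1}-1\sim(1-s)(1-t)$ is the right one. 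Your primary argument via the fractional Hardy inequality (\cref{lem:hardy} with $p=1$, admissible since $s<1$) is a genuinely shorter route not taken by the paper: it replaces the two-dimensional seminorm computation with the one-dimensional divergence of $\int_a (x-a)^{-1}\,dx$, at the cost of invoking \cref{lem:hardy} as a black box; the paper's computation is self-contained and, as a by-product, makes explicit that the obstruction to membership in $W^{s,1}(I)$ lives entirely near the left endpoint. Both are valid proofs of the statement.
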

	\begin{proof}
We claim that the function $$u(x):=\frac{(x - a)^{s-1}}{\Gamma(s)}$$ belongs to $W^{s,1}_{RL,a+}(I)\setminus W^{s,1}(I)$.
By Example \ref{ex:critical_power}, we know that $u \in L^1(I)$ and that $I^{1-s}_{a+}[u](x)=1$ for all $x \in I$, so that $I^{1-s}_{a+}[u]\in W^{1,1}(I)$, which implies that $u \in W^{s, 1}_{RL, a+}(I)$, by definition.
Then, we need to prove that the Gagliardo-Slobodeckij seminorm of $u$ is infinite. We see that
\begin{align*}
\Gamma(s) [u]_{W^{s,1}(I)} & := \int_{a}^{b} \int_{a}^{b}\frac{|(x - a)^{s-1} - (y - a)^{s-1}|}{|x-y|^{s+1}}\, dx \, dy = \left [ \begin{matrix}x = a + (b - a) x\\y = a + (b - a)y \end{matrix} \right ] \\
& = \int_{0}^{1} \int_{0}^{1}\frac{|x^{s-1} - y^{s-1}|}{|x-y|^{s+1}}\, dx \, dy = [x = y z] = \int_{0}^{1} \int_{0}^{\frac{1}{y}} \frac{|z^{s-1} - 1| y^{s-1}}{|z - 1|^{s + 1} y^{s + 1}} y \, d z \, dy \\
& = \int_{0}^{\infty} \int_{0}^{\min\left\{1, \frac{1}{z}\right\}} \frac{|z^{s-1} - 1|}{|z - 1|^{s + 1}y} \, d y \, dz = + \infty
\end{align*}
since $1/y \notin L^{1}((0, \delta))$, for any $\delta > 0$. Finally, we recall that $W^{s, 1}(I)$ contains $BV(I)$ and this ends the proof.
\end{proof}

\begin{remark}
In particular, we see that, for all $s \in (0, 1)$, all the inclusions $$BV(I) \hookrightarrow W^{s, 1}(I)  \hookrightarrow W^{s, 1}_{RL, a+}(I)$$ are strict: indeed, if we consider $$v_{\sigma}(x) := (x - a)^{\sigma - 1}$$
for some $\sigma \in (s, 1)$, then, arguing as in the proof of Proposition \ref{prop:strict_embedding}, it is easy to see that $v_{\sigma} \in W^{s, 1}(I) \setminus BV(I)$.
\end{remark}

	Now, we recall the fact that $W^{s,1}(I)$ is a real interpolation space; namely, $$W^{s,1}(I)=(L^1(I),W^{1,1}(I))_{s,1}.$$ 
	More generally, we have that, for $s\in(0,1)$, $1\le p\le\infty$, $1\le q\le \infty$, it holds that $$(L^p(I),W^{1,p}(I))_{s,q}=B^s_{p,q}(I),$$ where $B^s_{p,q}(I)$ denotes the Besov space; in particular, if $p=q$ we have $B^s_{p,p}(I)=W^{s,p}(I)$.
	In light of this observation, we can extend Proposition \ref{prop:propopop} in the following way.
	
\begin{corollary}
	\label{cor:besov}
	For $0<s<r<1$, $1\le p\le \infty$ and $1\le q \le \infty$, we have that the embedding $B^r_{p,q}(I)\hookrightarrow W^{s,1}_{RL,a+}(I)$ is continuous.
	\end{corollary}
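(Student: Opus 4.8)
The plan is to factor the desired embedding through the Gagliardo--Slobodeckij space $W^{s,1}(I)$ and then to invoke \cref{prop:propopop}. Concretely, I would prove the chain of continuous embeddings
\[
B^r_{p,q}(I) \hookrightarrow B^r_{1,q}(I) \hookrightarrow W^{s,1}(I) \hookrightarrow W^{s,1}_{RL,a+}(I),
\]
the last arrow being precisely the content of \cref{prop:propopop}.

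For the first arrow I would use the monotonicity of the real interpolation functor. Since $I$ is bounded, Hölder's inequality yields the continuous inclusions $L^p(I)\hookrightarrow L^1(I)$ and $W^{1,p}(I)\hookrightarrow W^{1,1}(I)$ for every $p\in[1,\infty]$ (with constants that are powers of $b-a$); hence the identity is bounded from each endpoint of the couple $(L^p(I),W^{1,p}(I))$ into the corresponding endpoint of $(L^1(I),W^{1,1}(I))$, and applying $(\cdot,\cdot)_{r,q}$ together with the identifications $(L^p(I),W^{1,p}(I))_{r,q}=B^r_{p,q}(I)$ and $(L^1(I),W^{1,1}(I))_{r,q}=B^r_{1,q}(I)$ recalled above gives $B^r_{p,q}(I)\hookrightarrow B^r_{1,q}(I)$.

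For the second arrow I would invoke the classical Besov embedding in the smoothness index: because $s<r$, one has $B^r_{1,q}(I)\hookrightarrow B^s_{1,1}(I)$ continuously for every $q\in[1,\infty]$, and $B^s_{1,1}(I)=W^{s,1}(I)$. If a self-contained argument is preferred, this is a one-line estimate on $K$-functionals: setting $K(t,u)=K(t,u;L^1(I),W^{1,1}(I))$, for $u\in B^r_{1,q}(I)\hookrightarrow B^r_{1,\infty}(I)$ one has $\sup_{0<t<1}t^{-r}K(t,u)\le C\|u\|_{B^r_{1,q}(I)}$, whence
\[
\int_0^1 t^{-s}K(t,u)\,\frac{dt}{t}\le\Big(\sup_{0<t<1}t^{-r}K(t,u)\Big)\int_0^1 t^{r-s}\,\frac{dt}{t}=\frac{C}{r-s}\,\|u\|_{B^r_{1,q}(I)},
\]
and, since $K(t,u)\le\|u\|_{L^1(I)}$ for $t\ge1$, this bounds $\|u\|_{W^{s,1}(I)}=\|u\|_{B^s_{1,1}(I)}$ by $C\|u\|_{B^r_{1,q}(I)}$. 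Equivalently, one may first pass to $W^{\sigma,1}(I)$ for some $\sigma\in(s,r)$ via $B^r_{1,q}(I)\hookrightarrow B^\sigma_{1,1}(I)$ and then use the elementary inequality $[u]_{W^{s,1}(I)}\le(b-a)^{\sigma-s}[u]_{W^{\sigma,1}(I)}$, which follows at once from $|x-y|\le b-a$.

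Composing the three embeddings gives the assertion. The only step that is not entirely routine is the second arrow — the loss of smoothness from $r$ down to $s$ — which is exactly where the strict inequality $s<r$ enters, and hence why the statement excludes the value $s=r$; the boundary cases $p,q\in\{1,\infty\}$ require no special attention, since $L^p(I)\hookrightarrow L^1(I)$ holds for every $p$ on the bounded interval and the smoothness embedding is valid for every $q$.
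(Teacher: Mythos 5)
Your proof is correct and follows essentially the same route as the paper: both reduce the claim to the continuous embedding $B^r_{p,q}(I)\hookrightarrow B^s_{1,1}(I)=W^{s,1}(I)$ and then conclude via \cref{prop:propopop}. The only difference is that the paper simply cites \cite[Proposition 1.4]{MR3753604} for the Besov step, whereas you supply a self-contained argument (interpolation monotonicity for the drop in integrability, a $K$-functional estimate for the drop in smoothness), which is a harmless and correct elaboration.
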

	\begin{proof}
		Using \cite[Proposition 1.4]{MR3753604}, we have that, for $r,s,p,q$ as given in our claim, the Besov space $B^r_{p,q}(I)$ is continuously embedded in $B^s_{1,1}(I)=W^{s,1}(I)$, which is continuously embedded in $W^{s,1}_{RL,a+}(I)$ thanks to Proposition \ref{prop:propopop}, and this proves the claim. \end{proof}

\begin{remark}
	\label{rem:mironsic}
	Unfortunately, Corollary \ref{cor:besov} does not cover the case $r=s$ for any choice of $p$ and $q$. Therefore, in the particular case $p=q$ we are unable to conclude that Proposition \ref{prop:propopop} extends to $W^{s,p}(I)$ for any $p>1$. Indeed, in \cite{MR3357858} the authors prove that for $1\le q<p\le \infty$ and $s>0$, $s \notin \N$, we have $W^{s,p}(I)\not\subseteq W^{s,q}(I)$.
\end{remark}

Now, we extend Proposition \ref{prop:propopop} to the case $p>1$.

\begin{proposition}
	\label{prop:newprop}
	Let $p \in [1, \infty)$ and $s\in \left ( 0,\frac{1}{p} \right )$. Then it holds that
	\begin{equation}
	W^{r,p}(I)\hookrightarrow W^{s,p}_{RL,a+}(I),
	\end{equation}
	for all $r\in \left [s+\frac{1}{p'},1 \right )$, where $p'$ denotes the H\"older conjugate of $p$.
\end{proposition}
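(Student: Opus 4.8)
The plan is to show that for $u\in W^{r,p}(I)$ the function $v:=I^{1-s}_{a+}[u]$ belongs to $W^{1,p}(I)$, with distributional derivative equal to the left Marchaud fractional derivative ${}^{M}D^{s}_{a+}[u]$, and that $\|{}^{M}D^{s}_{a+}[u]\|_{L^{p}(I)}\le C\|u\|_{W^{r,p}(I)}$ for some $C=C(s,p,a,b)>0$. Since $v\in L^{p}(I)$ by \cref{prop:cont_BLNT} and $u\in L^{p}(I)$, this will give $u\in W^{s,p}_{RL,a+}(I)$, and, recalling that $\|u\|_{W^{s,p}_{RL,a+}(I)}=\|u\|_{L^{p}(I)}+\|v\|_{L^{p}(I)}+\|v'\|_{L^{p}(I)}$, also the continuity of the embedding. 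The case $p=1$, in which the hypothesis reads $r\ge s$, follows immediately from \cref{prop:propopop} together with the elementary embedding $W^{r,1}(I)\hookrightarrow W^{s,1}(I)$, so I would assume $p>1$ throughout.

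First I would prove the $L^{p}$-bound for ${}^{M}D^{s}_{a+}[u]$, treating its two summands separately. Since $sp<1$ by hypothesis, the fractional Hardy inequality (\cref{lem:hardy}), together with $|\delta_{I}(x)|\le x-a$ and $r\ge s$, controls the first one:
\begin{equation*}
\int_{a}^{b}\frac{|u(x)|^{p}}{(x-a)^{sp}}\,dx\le\int_{a}^{b}\frac{|u(x)|^{p}}{|\delta_{I}(x)|^{sp}}\,dx\le C\|u\|_{W^{s,p}(I)}^{p}\le C\|u\|_{W^{r,p}(I)}^{p}.
\end{equation*}
For the second summand, I would write $g(x):=\int_{a}^{x}\frac{u(x)-u(t)}{(x-t)^{s+1}}\,dt=\int_{0}^{x-a}\frac{u(x)-u(x-h)}{h^{s+1}}\,dh$ and apply Minkowski's integral inequality to get $\|g\|_{L^{p}(I)}\le\int_{0}^{b-a}h^{-s-1}\omega_{p}(h)\,dh$, where $\omega_{p}(h)^{p}:=\int_{a+h}^{b}|u(x)-u(x-h)|^{p}\,dx$; then Hölder's inequality in $h$ with exponents $p,p'$ and the splitting $h^{-s-1}=h^{-r-1/p}\cdot h^{\,r+1/p-s-1}$ gives
\begin{equation*}
\|g\|_{L^{p}(I)}\le\left(\int_{0}^{b-a}\frac{\omega_{p}(h)^{p}}{h^{rp+1}}\,dh\right)^{1/p}\left(\int_{0}^{b-a}h^{(r+1/p-s-1)p'}\,dh\right)^{1/p'}.
\end{equation*}
The first factor equals $2^{-1/p}[u]_{W^{r,p}(I)}$ by the Fubini identity $\int_{0}^{b-a}h^{-rp-1}\omega_{p}(h)^{p}\,dh=\tfrac12[u]_{W^{r,p}(I)}^{p}$, while the second is finite precisely because $(r+1/p-s-1)p'>-1$, i.e. $r>s$, which is guaranteed by $r\ge s+1/p'$. (Equivalently, one can estimate $g(x)$ pointwise via Hölder's inequality in $t$ with the weight $(x-t)^{r+1/p}$, obtaining the same bound.) Summing the two contributions yields $\|{}^{M}D^{s}_{a+}[u]\|_{L^{p}(I)}\le C\|u\|_{W^{r,p}(I)}$.

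The remaining, and I expect most delicate, step is to verify that ${}^{M}D^{s}_{a+}[u]$ is actually the weak derivative of $v=I^{1-s}_{a+}[u]$, not merely a well-defined $L^{p}$ function; this is the main obstacle, since density of $C^{1}_{c}(I)$ in $W^{r,p}(I)$ (used in \cref{lem:density}) fails once $rp\ge 1$. Instead I would approximate $u$ by $u_{n}\in C^{\infty}(\overline{I})$ with $u_{n}\to u$ in $W^{r,p}(I)$. For smooth $u_{n}$, formula \eqref{eq:Marchaud_equiv} gives $D^{s}_{a+}[u_{n}]={}^{M}D^{s}_{a+}[u_{n}]$, and, since $I^{1-s}_{a+}[u_{n}]\in C([a,b])\cap C^{1}((a,b])$ with $I^{1-s}_{a+}[u_{n}](a)=0$ by \eqref{eq:regularvolterra} while $D^{s}_{a+}[u_{n}]$ has an integrable singularity at $a$, integration by parts against any $\phi\in C^{1}_{c}(I)$ gives $\int_{a}^{b}{}^{M}D^{s}_{a+}[u_{n}]\,\phi\,dx=-\int_{a}^{b}I^{1-s}_{a+}[u_{n}]\,\phi'\,dx$. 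Applying the $L^{p}$-estimate of the previous paragraph to $u_{n}-u$ shows ${}^{M}D^{s}_{a+}[u_{n}]\to{}^{M}D^{s}_{a+}[u]$ in $L^{p}(I)$, while $I^{1-s}_{a+}[u_{n}]\to I^{1-s}_{a+}[u]$ in $L^{p}(I)$ by \cref{prop:cont_BLNT}; letting $n\to\infty$ yields $\int_{a}^{b}{}^{M}D^{s}_{a+}[u]\,\phi\,dx=-\int_{a}^{b}I^{1-s}_{a+}[u]\,\phi'\,dx$ for every $\phi\in C^{1}_{c}(I)$. Hence $v\in W^{1,p}(I)$ with $D^{s}_{a+}[u]={}^{M}D^{s}_{a+}[u]$, and collecting the bounds above completes the proof.
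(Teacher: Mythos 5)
Your proof is correct, and it diverges from the paper's argument in two substantive ways. First, for the identification of ${}^M D^s_{a+}[u]$ as the weak derivative of $I^{1-s}_{a+}[u]$: the paper embeds $W^{\sigma,p}(I)$, with $\sigma = s+\frac{1}{p'}$, into $W^{s,1}(I)$ via the Besov-space embedding \cite[Proposition 1.4]{MR3753604} and then invokes Lemma \ref{lem:density}, which already carries out the weak-derivative identification in the $W^{s,1}$ setting; you instead approximate in $W^{r,p}(I)$ by functions smooth up to the boundary and pass to the limit using your a priori $L^p$ bound. Both routes are valid; the paper's recycles Lemma \ref{lem:density}, yours is self-contained modulo the (standard, but unstated in the paper) density of $C^\infty(\overline{I})$ in $W^{r,p}(I)$. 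Second, and more interestingly, your estimate of the singular-integral term is sharper: the paper bounds it by pulling $(x-a)^{p-1}$ out via H\"older's inequality in $t$, which forces the exponent $sp+p=\sigma p+1$ and is exactly where the hypothesis $r\ge s+\frac{1}{p'}$ enters; your Minkowski-plus-H\"older argument in the increment variable $h$ (whose ingredients -- the layer decomposition of the Gagliardo seminorm and the convergence of $\int_0^{b-a}h^{(r+1/p-s-1)p'}\,dh$ -- all check out) only needs $r>s$. Combined with the Hardy inequality step, which requires only $sp<1$ and $r\ge s$, your proof therefore yields the embedding for every $r\in(s,1)$, a strictly larger range than the one stated, and thus partially addresses the gap $r\in\left[s,s+\frac{1}{p'}\right)$ discussed in Section \ref{sec:open}; the endpoint $r=s$ still escapes both arguments, since your second H\"older factor diverges there.
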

\begin{proof}
First of all, setting $\sigma:=s+\frac{1}{p'}$, we notice that, since $s<\frac{1}{p}$, we have $s<\sigma<1$.
	Moreover, since $W^{r,p}(I)\hookrightarrow W^{\sigma,p}(I)$ for any $r\in(\sigma,1)$ (see {\it e.g.} \cite[Proposition 2.1.]{MR2944369}), we reduce ourselves to prove that $W^{\sigma,p}(I)\emb W^{s,p}_{RL,a+}(I)$.
	
Now, the case $p=1$ follows immediately from Corollary \ref{cor:besov} with $q=p$.
	
Then, let $p > 1$ and $u\in W^{\sigma,p}(I)$. By \cite[Proposition 1.4]{MR3753604} we have $u\in W^{s,1}(I)$, and so, using Proposition \ref{prop:propopop} and Lemma \ref{lem:density}, it follows that $u\in W^{s,1}_{RL,a+}(I)$ with $D^s_{a+}[u](x)={}^M D^s_{a+}[u](x)$ for $\Leb{1}$-a.e. $x\in I$.
	
To conclude, it is sufficient to estimate the $L^p(I)$-norm of 
	\begin{equation}
	\label{eq:raitendsaid}
	D^s_{a+}[u](x) = {}^M D^s_{a+}[u](x) =\frac{1}{\Gamma(1-s)}\frac{u(x)}{(x-a)^s}+\frac{s}{\Gamma(1-s)}\int_a^x\frac{u(x)-u(t)}{(x-t)^{s+1}}dt.
	\end{equation}
	Since $sp<1$ and $\sigma>s$, using Lemma \ref{lem:hardy} and \cite[Proposition 2.1]{MR2944369}, for the first term in the right-hand side of \eqref{eq:raitendsaid} it holds that
	\begin{equation}
	\label{eq:usinghardy}
	\int_a^b\frac{|u(x)|^p}{(x-a)^{sp}}dx\le C\left\|u\right\|^p_{W^{s,p}(I)}\le C\left\|u\right\|^p_{W^{\sigma,p}(I)},
	\end{equation}
	for some constant $C>0$.
	
As for the second term in the right-hand side of \eqref{eq:raitendsaid}, we exploit H\"older's inequality to get 
	\begin{align*}
	\int_a^b \left|\int_a^x\frac{u(x)-u(t)}{(x-t)^{s+1}} \, dt\right|^p dx & \leq\int_a^b \int_a^x\frac{|u(x)-u(t)|^p}{(x-t)^{sp+p}}\, dt (x - a)^{p - 1} \, dx \\
	& \le (b - a)^{p - 1} \int_a^b \int_a^b\frac{|u(x)-u(t)|^p}{|x-t|^{sp+p}} \, dt \, dx.
	\end{align*}
Therefore, since $sp+p=\sigma p+1$, we have
\begin{equation} \label{eq:lasttermraitend}
\int_a^b \left|\int_a^x\frac{u(x)-u(t)}{(x-t)^{s+1}} \, dt\right|^p dx \le (b - a)^{p - 1} [u]^p_{W^{\sigma,p}(I)}.
\end{equation}
By combining \eqref{eq:usinghardy} and \eqref{eq:lasttermraitend}, we immediately obtain 
\begin{equation*}
	\left\|D^s_{a+}[u]\right\|_{L^p(I)}\le\max\{(b - a)^{\frac{1}{p'}},C\}\left\|u\right\|_{W^{\sigma,p}(I)},
\end{equation*}
	and this concludes the proof.
	\end{proof}

\begin{remark}
It is interesting to notice that some inequalities of Poincaré type hold true in the fractional context, for which we refer for instance to \cite[Chapter 17]{MR2513750}. However, in general it is not possible to retrieve the classical Poincaré inequality by estimating the $L^{p}$ norm of the difference between $u$ and its average with the $L^{p}$ norm of its Riemann-Liouville derivative. Indeed, let us consider the function $u(x):=(x-a)^{s-1}$ for some $s\in(0,1)$. By Example \ref{ex:critical_power}, we have $u\in L^p(I)$ for all $1 \le p<\frac{1}{1-s}$, and
\begin{equation*}
I_{a+}^{1 - s}[u](x) = \Gamma(s) \ \text{ and } \ D^{s}_{a+}[u](x) = 0 \text{ for all } x \in I.
\end{equation*} 
Therefore, $u\in W^{s,p}_{RL,a+}(I)$ for all $p \in \left [1, \frac{1}{1 - s} \right )$. Thus, being $u$ not constant, we cannot hope for any sort of Poincaré inequality of the type
\begin{equation*}
\int_{a}^{b} |u - u_{I}|^{p} \, dx \le C \int_{a}^{b} |D^{s}_{a+}[u]|^p \, dx,
\end{equation*}
where 
	$$
	u_I:=\frac{1}{b - a}\int_a^b u(x) \, dx = \frac{1}{s} (b - a)^{s - 1}.
	$$
\end{remark}

\smallskip

\section{The space $BV^s_{RL,a+}(I)$} \label{sec:BV_s}

In analogy with the previous definition of left Riemann-Liouville fractional Sobolev spaces, we introduce now the natural extension to the $BV$ framework.

\begin{definition} Let $s\in(0,1)$. We define the space of {\em functions with left Riemann-Liouville fractional bounded variation} as
\begin{equation*}
BV^{s}_{RL,a+}\left(I\right):=\left\{u\in L^1\left(I\right) : \ I^{1-s}_{a+}\left[u\right]\in BV(I) \right\}.
\end{equation*}
\end{definition}

From the definition, it follows immediately that $u$ belongs to $BV^{s}_{RL,a+}\left(I\right)$ if and only if there exists a measure $\mu^{s} \in\mathcal{M}(I)$ satisfying
\begin{equation*}
\int_a^b I^{1-s}_{a+}[u](x) \, \phi'(x) \, dx = - \int_a^b \phi(x) \, d\mu^{s}(x)
\end{equation*}
for any $\phi\in C_c^1(I)$, and we call $\mathcal{D}^{s}_{a+}[u] = D I^{1-s}_{a+}[u]:=\mu^{s}$ the left Riemann-Liouville distributional $s$-fractional derivative. In order to avoid any ambiguity, we denote with $D^s_{a+}[u]$ the density of the absolutely continuous part of $\mathcal{D}^s_{a+}[u]$ with respect to the Lebesgue measure $\Leb{1}$.

It is not difficult to see that the space $BV^{s}_{RL,a+}(I)$, endowed with the norm
$$
\left\|u\right\|_{BV^s_{RL,a+}(I)}:=\left\|u\right\|_{L^{1}(I)} + \left\|I^{1-s}_{a+}[u]\right\|_{BV(I)},
$$
is a Banach space.

Arguing analogously as in Lemma \ref{result:intbp}, we derive a duality relation between the left Riemann-Liouville weak $s$-fractional derivative and the right Caputo $s$-fractional derivative.

\begin{corollary} \label{cor:Caputo_der_dual}
	A function $u\in L^1(I)$ belongs to $BV^s_{RL,a+}(I)$ if and only if there exists $\mu^{s} \in\mathcal{M}(I)$ such that 
	$$
	\int_a^b u(x) \, {}^C D_{b-}^s[\phi](x) \, dx=\int_a^b\phi(x) \, d\mu^{s}(x)
	$$
	for every $\phi\in C_c^1(I)$. In that case, we have $\mathcal{D}^{s}_{a+}[u]=\mu^{s}$.
\end{corollary}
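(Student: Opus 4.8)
The plan is to obtain the statement directly from the definition of $BV^s_{RL,a+}(I)$ together with the duality relation of Lemma \ref{result:duality_fract_int}, following the same scheme as in the proof of Lemma \ref{result:intbp}. First, since $u\in L^1(I)$, Remark \ref{rem:frac_int_well_posed} ensures that $I^{1-s}_{a+}[u]\in L^1(I)$, so its distributional derivative is well defined. By the definitions of $BV(I)$ and of $BV^s_{RL,a+}(I)$, the function $u$ lies in $BV^s_{RL,a+}(I)$ if and only if there is a finite Radon measure $\mu^{s}\in\M(I)$ with
\begin{equation*}
\int_a^b I^{1-s}_{a+}[u](x)\,\phi'(x)\,dx = -\int_a^b \phi(x)\,d\mu^{s}(x)\qquad\text{for all }\phi\in C^1_c(I),
\end{equation*}
and in that case $\mathcal{D}^{s}_{a+}[u]=DI^{1-s}_{a+}[u]=\mu^{s}$ by definition.

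The key step is to rewrite the left-hand side via duality. Fixing $\phi\in C^1_c(I)$, we have $\phi'\in C^0_c(I)\subset L^1(I)$, so Lemma \ref{result:duality_fract_int}, applied with $1-s$ in place of $s$ and $v=\phi'$, gives
\begin{equation*}
\int_a^b I^{1-s}_{a+}[u](x)\,\phi'(x)\,dx = \int_a^b u(x)\,I^{1-s}_{b-}[\phi'](x)\,dx = -\int_a^b u(x)\,{}^{C}D^{s}_{b-}[\phi](x)\,dx,
\end{equation*}
where the last equality is simply the definition \eqref{eq:caputor} of the right Caputo derivative, which is legitimate since $\phi\in C^1_c(I)\subset C^1(\overline{I})$. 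Substituting this identity into the previous display, the $BV$-condition becomes
\begin{equation*}
\int_a^b u(x)\,{}^{C}D^{s}_{b-}[\phi](x)\,dx = \int_a^b \phi(x)\,d\mu^{s}(x)\qquad\text{for all }\phi\in C^1_c(I),
\end{equation*}
which is exactly the asserted characterization, and the identification $\mathcal{D}^{s}_{a+}[u]=\mu^{s}$ is inherited from the $BV$-characterization above.

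I do not expect any genuine obstacle here: the only point deserving a word of care is the applicability of Fubini's theorem inside Lemma \ref{result:duality_fract_int}, but this is immediate because both $u$ and $\phi'$ belong to $L^1(I)$ and the kernel $(x-t)^{s-1}$ is locally integrable — precisely the estimate already carried out in Remark \ref{rem:frac_int_well_posed}. Thus the corollary follows by combining the definition of $BV^s_{RL,a+}(I)$ with a single application of the duality lemma.
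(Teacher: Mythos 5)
Your proof is correct and follows exactly the route the paper intends: the paper gives no separate proof of this corollary, remarking only that one argues ``analogously as in Lemma \ref{result:intbp}'', and your combination of the definition of $BV^s_{RL,a+}(I)$ with the duality of Lemma \ref{result:duality_fract_int} applied to $v=\phi'$ (equivalently, the Fubini computation in the proof of Lemma \ref{result:intbp}) is precisely that argument, with the signs handled correctly.
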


It is natural to ask what we can say about the decomposition of the measure $\mathcal{D}^{s}_{a+}[u]$ for a function $u\in BV^s_{RL,a+}(I)$. We start with the following result.

\begin{proposition} \label{prop:BV_s_W_s_a}
Let $u \in L^{1}(I)$ and $s \in (0, 1)$. Then $u \in W^{s, 1}_{RL, a+}(I)$ if and only if $u \in BV^{s}_{RL, a+}(I)$ and $\mathcal{D}^{s}_{a+}[u] \ll \Leb{1}$. In particular, the embedding $W^{s, 1}_{RL, a+}(I) \emb BV^{s}_{RL, a+}(I)$ is continuous, and $\mathcal{D}^{s}_{a+}[u] = D^{s}_{a+}[u] \Leb{1}$ for any $u \in W^{s, 1}_{RL, a+}(I)$. In addition, if $u\in BV(I)$, then 
\begin{equation*}
\mathcal{D}^s_{a+}[u]= \left ( I^{1-s}_{a+}[Du] + \frac{1}{\Gamma\left(1 - s\right)} \frac{u(a+)}{(\cdot -a)^{s}} \right ) \Leb{1}.
\end{equation*}
\end{proposition}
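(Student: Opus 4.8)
The plan is to reduce the whole statement to the classical dichotomy for one-dimensional $BV$ functions recalled in Section \ref{sec:prel}: a function $f \in BV(I)$ belongs to $W^{1,1}(I)$ if and only if its distributional derivative $Df$ is absolutely continuous with respect to $\Leb{1}$, in which case $Df = f' \Leb{1}$ with $f' \in L^{1}(I)$. (One direction is trivial; for the other, if $Df \ll \Leb{1}$ then in the decomposition $f = f_{ac} + f_{j} + f_{c}$ the jump and Cantor parts carry no derivative, so $f$ agrees $\Leb{1}$-a.e. with $f_{ac} \in W^{1,1}(I)$.) Since $I^{1-s}_{a+}[u] \in L^{1}(I)$ for every $u \in L^{1}(I)$ by \cref{rem:frac_int_well_posed}, applying this with $f = I^{1-s}_{a+}[u]$ gives the equivalence at once: $u \in W^{s,1}_{RL,a+}(I)$ means $I^{1-s}_{a+}[u] \in W^{1,1}(I)$, which holds if and only if $I^{1-s}_{a+}[u] \in BV(I)$ and $D I^{1-s}_{a+}[u] = \mathcal{D}^{s}_{a+}[u] \ll \Leb{1}$; that is, if and only if $u \in BV^{s}_{RL,a+}(I)$ and $\mathcal{D}^{s}_{a+}[u] \ll \Leb{1}$. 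In that case $\mathcal{D}^{s}_{a+}[u] = (I^{1-s}_{a+}[u])' \Leb{1}$ is purely absolutely continuous, so the density of its absolutely continuous part — what the paper calls $D^{s}_{a+}[u]$ — coincides with the $\Leb{1}$-a.e. pointwise derivative $(I^{1-s}_{a+}[u])'$, and hence $\mathcal{D}^{s}_{a+}[u] = D^{s}_{a+}[u]\,\Leb{1}$.

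For the continuity of the embedding $W^{s,1}_{RL,a+}(I) \emb BV^{s}_{RL,a+}(I)$, I would simply compare the two norms: for $u \in W^{s,1}_{RL,a+}(I)$, using the identity just obtained,
\begin{align*}
\|u\|_{BV^{s}_{RL,a+}(I)} &= \|u\|_{L^{1}(I)} + \|I^{1-s}_{a+}[u]\|_{L^{1}(I)} + |\mathcal{D}^{s}_{a+}[u]|(I) \\
&= \|u\|_{L^{1}(I)} + \|I^{1-s}_{a+}[u]\|_{L^{1}(I)} + \|D^{s}_{a+}[u]\|_{L^{1}(I)} = \|u\|_{W^{s,1}_{RL,a+}(I)},
\end{align*}
so the inclusion does not increase the norm; in particular it is continuous.

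Finally, for the explicit expression when $u \in BV(I)$, \cref{result:BV_W_s1_RL} already provides $u \in W^{s,1}_{RL,a+}(I)$ together with the pointwise formula $D^{s}_{a+}[u](x) = I^{1-s}_{a+}[Du](x) + \frac{1}{\Gamma(1-s)}\frac{u(a+)}{(x-a)^{s}}$; substituting this into $\mathcal{D}^{s}_{a+}[u] = D^{s}_{a+}[u]\,\Leb{1}$, established in the first part, yields exactly the claimed identity. The proof is essentially bookkeeping: the only point that needs a moment's care is the identification of the two meanings of $D^{s}_{a+}[u]$ (the pointwise derivative of $I^{1-s}_{a+}[u]$ versus the density of the absolutely continuous part of $\mathcal{D}^{s}_{a+}[u]$), which is automatic here because membership in $W^{s,1}_{RL,a+}(I)$ forces $\mathcal{D}^{s}_{a+}[u]$ to be absolutely continuous. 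Thus I do not expect a genuine obstacle; the substantial content is already contained in \cref{result:BV_W_s1_RL} and in the classical measure theory recalled earlier.
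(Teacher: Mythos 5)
Your proof is correct and follows essentially the same route as the paper's: both directions reduce to the classical fact that a $BV$ function lies in $W^{1,1}(I)$ if and only if its derivative measure is absolutely continuous with respect to $\Leb{1}$, applied to $I^{1-s}_{a+}[u]$, and the $BV(I)$ case is read off from Theorem \ref{result:BV_W_s1_RL} exactly as in the paper. The only cosmetic difference is that the paper identifies $\mathcal{D}^{s}_{a+}[u]$ with $D^{s}_{a+}[u]\,\Leb{1}$ by passing through the Caputo duality formulas (Lemma \ref{result:intbp} and Corollary \ref{cor:Caputo_der_dual}), whereas you obtain it directly from the inclusion $W^{1,1}(I)\subset BV(I)$; your explicit norm computation, showing the embedding is in fact an isometry, is a small refinement of the paper's ``this immediately implies'' for continuity.
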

	\begin{proof}
If $u \in W^{s, 1}_{RL, a+}(I)$, Lemma \ref{result:intbp} implies that 
	$$
	\int_a^b u(x) \, {}^C D_{b-}^s[\phi](x) \, dx=\int_a^b\phi(x) D^{s}_{a+}[u](x) \, dx 
	$$
	for every $\phi\in C_c^1(I)$. Thus, Corollary \ref{cor:Caputo_der_dual} implies that $u \in BV^{s}_{RL, a+}(I)$ with $$\mathcal{D}^{s}_{a+}[u] = D^{s}_{a+}[u] \Leb{1}.$$  
	This immediately implies that the embedding $W^{s,1}_{RL,a+}(I) \emb BV^{s}_{RL, a+}(I)$ is continuous.
	On the other hand, if $u \in BV^{s}_{RL, a+}(I)$ and $\mathcal{D}^{s}_{a+}[u] \ll \Leb{1}$, then $I^{1 - s}_{a+}[u]$ belongs to $W^{1, 1}(I)$, and so $u \in W^{s, 1}_{RL, a+}(I)$.
By Theorem \ref{result:BV_W_s1_RL}, if $u\in BV(I)$, then $u\in W^{s,1}_{RL,a+}(I)$, and the representation formula is a trivial consequence of \eqref{eq:repr_form_BV}.
\end{proof}

In the spirit of Lemma \ref{lem:invinteder}, we can obtain a version of the Fundamental Theorem of Calculus for functions in $BV^{s}_{RL, a+}(I)$.

\begin{lemma}\label{lem:invinteder_BV}
Let $s\in(0,1)$ and $u\in BV^{s}_{RL,a+}(I)$. Then, for $\Leb{1}$-a.e. $x\in I$, we also have 
\begin{equation}\label{eq:invinteder_1_BV}
		u(x) = D^s_{a+}[I^s_{a+}[u]](x) =  I^s_{a+}[\mathcal{D}^s_{a+}[u]](x)+\frac{I^{1-s}_{a+}[u](a+)}{\Gamma(s)}(x-a)^{s-1}.
\end{equation}
In addition, if $u \in BV^{s}_{RL,a+}(I) \cap I^s_{a+}(L^1(I))$, then $u \in W^{s, 1}_{RL, a+}(I) \cap I^{s}_{a+}(L^{1}(I)), I^{1 - s}_{a+}[u](a+) = 0$ and \eqref{eq:invinteder_3} holds.
\end{lemma}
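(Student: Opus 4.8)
The plan is to mimic the proof of Lemma~\ref{lem:invinteder}, replacing the Fundamental Theorem of Calculus for $W^{1,1}$ functions with its $BV$ counterpart, and the fractional integral of an $L^1$ function with that of a measure as provided by Proposition~\ref{result:measure_fract_int}. Since $u\in L^1(I)$, the identity $u(x)=D^s_{a+}[I^s_{a+}[u]](x)$ for $\Leb{1}$-a.e.\ $x$ is exactly \eqref{eq:invinteder_1}, which already gives the first equality in \eqref{eq:invinteder_1_BV}. For the remaining equality I would set $\mu:=\mathcal{D}^s_{a+}[u]\in\M(I)$ and proceed through two ingredients.

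The first ingredient is a \emph{semigroup law for measures}: for $\Leb{1}$-a.e.\ $x\in I$,
\begin{equation*}
I^{1-s}_{a+}[I^s_{a+}[\mu]](x)=\mu((a,x)).
\end{equation*}
I would prove this from Fubini--Tonelli — the interchange being legitimate because the analogous integral with $|\mu|$ in place of $\mu$ equals $\Gamma(s)\Gamma(1-s)\,|\mu|((a,x))<\infty$ — together with the Euler beta identity $\int_0^1\sigma^{s-1}(1-\sigma)^{-s}\,d\sigma=\Gamma(s)\Gamma(1-s)$ already exploited in Example~\ref{ex:critical_power}. The second ingredient is that $x\mapsto\mu((a,x))$ has distributional derivative $\mu$ on $I$: for $\phi\in C^1_c(I)$, Fubini's theorem gives $\int_a^b\mu((a,x))\,\phi'(x)\,dx=\int_a^b(\phi(b)-\phi(t))\,d\mu(t)=-\int_a^b\phi\,d\mu$. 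Since $DI^{1-s}_{a+}[u]=\mu$ by the very definition of $BV^s_{RL,a+}(I)$, the function $I^{1-s}_{a+}[u](\cdot)-\mu((a,\cdot))\in L^1(I)$ has vanishing distributional derivative on the interval $I$, hence equals a constant $c$ $\Leb{1}$-a.e.; averaging $I^{1-s}_{a+}[u](x)=c+\mu((a,x))$ over $(a,a+r)$ and letting $r\to0^+$, using $|\mu|((a,a+r))\to0$ by continuity from above of the finite measure $|\mu|$, identifies $c=I^{1-s}_{a+}[u](a+)$.

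Then I would set $w:=I^s_{a+}[\mu]+\frac{I^{1-s}_{a+}[u](a+)}{\Gamma(s)}(\cdot-a)^{s-1}$, which lies in $L^1(I)$ by Proposition~\ref{result:measure_fract_int} and because $s-1>-1$. Combining the two ingredients with \eqref{eq:frac_int_critical_power} gives $I^{1-s}_{a+}[w](x)=\mu((a,x))+I^{1-s}_{a+}[u](a+)=I^{1-s}_{a+}[u](x)$ for $\Leb{1}$-a.e.\ $x$; applying $D^{1-s}_{a+}$ to both sides and using \eqref{eq:invinteder_1} with $s$ replaced by $1-s$ (valid since $u,w\in L^1(I)$) yields $u=w$ $\Leb{1}$-a.e., which is the second equality in \eqref{eq:invinteder_1_BV}. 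For the last assertion, assuming additionally $u\in I^s_{a+}(L^1(I))$, Lemma~\ref{lem:image_fract_int_Sob} with $p=q=1$ shows this is equivalent to $u\in W^{s,1}_{RL,a+}(I)$ together with $I^{1-s}_{a+}[u](a)=0$; since $I^{1-s}_{a+}[u]\in W^{1,1}(I)$ admits a representative in $AC(\overline{I})$ which is continuous up to $a$, its approximate right limit coincides with its value there, so $I^{1-s}_{a+}[u](a+)=0$, and then \eqref{eq:invinteder_3} is precisely the last part of Lemma~\ref{lem:invinteder}.

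The main obstacle will be the careful handling of the measure $\mathcal{D}^s_{a+}[u]$, which need not be absolutely continuous and may even carry atoms (as an example later in this section shows): one must justify the Fubini--Tonelli interchange against $\mu$ in the semigroup identity, and exploit the convention $\int_a^x\,d\mu=\mu((a,x))$ so that endpoints of the relevant intervals contribute nothing for $\Leb{1}$-a.e.\ $x$. Everything else is a routine transcription of the $W^{s,1}_{RL,a+}$ argument.
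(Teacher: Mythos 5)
Your proposal is correct and follows essentially the same route as the paper: first equality from \eqref{eq:invinteder_1}, the $BV$ fundamental theorem to write $I^{1-s}_{a+}[u](x)=I^{1-s}_{a+}[u](a+)+\mathcal{D}^s_{a+}[u]((a,x))$, the semigroup law extended to measures together with \eqref{eq:frac_int_critical_power}, then an application of $D^{1-s}_{a+}$ via \eqref{eq:invinteder_1}, and finally Lemma \ref{lem:image_fract_int_Sob} for the last assertion. The only difference is cosmetic: you prove by hand (Fubini against $|\mu|$, and the constant-of-integration argument) what the paper obtains by citing \cite[Theorem 3.28]{AFP} and by implicitly invoking the measure-valued semigroup identity.
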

\begin{proof}
The first equality in \eqref{eq:invinteder_1_BV} follows immediately from \eqref{eq:invinteder_1}. The second one can be proved as \eqref{eq:invinteder_2}. Indeed, if $u \in BV^{s}_{RL, a+}(I)$, then $I^{1-s}_{a+}[u] \in BV(I)$ with weak derivative $\mathcal{D}^{s}_{a+}[u]$. Therefore, by \cite[Theorem 3.28]{AFP}, for $\Leb{1}$-a.e. $x \in I$, we get
\begin{equation*}
			\begin{split}
			I^{1-s}_{a+}[u](x)&=\int_a^x \, d \mathcal{D}^s_{a+}[u](t) + I^{1-s}_{a+}[u](a+) \\
			&=I^{1-s}_{a+}[I^s_{a+}[\mathcal{D}^s_{a+}[u]]](x)+I^{1-s}_{a+}\left[\frac{I^{1-s}_{a+}[u](a+)}{\Gamma(s)}(\cdot-a)^{s-1}\right](x)
			\end{split}
			\end{equation*}
by \eqref{eq:frac_int_critical_power}.
We notice that $\mathcal{D}^{s}_{a+}[u] \in \mathcal{M}(I)$, and so, by Proposition \ref{result:measure_fract_int}, $I^{s}_{a+}[\mathcal{D}^{s}_{a+}[u]] \in L^{1}(I)$. Thus, it is enough to apply $D^{1-s}_{a+}$ to both sides of the equation and use \eqref{eq:invinteder_1} to obtain \eqref{eq:invinteder_1_BV}. 
Finally, if $u\in BV^{s}_{RL,a+}(I) \cap I^s_{a+}(L^1(I))$, then, by Lemma \ref{lem:image_fract_int_Sob} with $p=1$, we have $u \in W^{s, 1}_{RL, a+}(I), I^{1-s}_{a+}[u](a+)=0$, and so it satisfies the hypotheses for \eqref{eq:invinteder_3}. This ends the proof.
	\end{proof}

\begin{remark}
	\label{rem:klopp}
As a trivial consequence of Proposition \ref{prop:BV_s_W_s_a}, we see that, if $u\in W^{s,p}_{RL,a+}(I)$ for some $p\ge 1$, and $s\in(0,1)$, then $u\in BV^s_{RL,a+}(I)$, and $\mathcal{D}^s_{a+}[u] \ll \Leb{1}$, with density given by the left Riemann-Liouville weak $s$-fractional derivative.
\end{remark}

We show that the inclusion of $W^{s, 1}_{RL, a+}(I)$ into $BV^s_{RL,a+}(I)$ is strict, by constructing an example of a function $u$ such that the measure $\mathcal{D}^s_{a+}[u]$ is not absolutely continuous with respect $\Leb{1}$.

\begin{example}[$BV^s_{RL,a+}(I)\setminus W^{s, 1}_{RL, a+}(I) \neq \emptyset$] \label{rem:BV^s_RL_BV_inclusion}
Let $s\in(0,1)$, $J=(c,d)$ with $c,d\in\mathbb{R}$ such that $a<c<d<b$. We define the following function
\begin{equation*}
u(x):=
\begin{cases}
0 & \text{ if } \ a < x \le c, \\
\displaystyle \frac{(x-c)^{s-1}}{\Gamma(s)} & \text{ if } \ c<x\le d, \\
0 & \text{ if } \ d<x < b.
\end{cases}
\end{equation*}
Now, we compute $I^{1-s}_{a+}[u](x)$. Clearly, when $x\in (a,c)$, $I^{1-s}_{a+}[u](x)=0$. On the other hand, for $x\in J$ by \eqref{eq:frac_int_critical_power} we obtain
\begin{equation*}
I^{1-s}_{a+}[u](x)=\frac{1}{\Gamma(s)\Gamma(1-s)}\int_c^x (t-c)^{s-1}(x-t)^{-s} \, dt=1.
\end{equation*}
Therefore, for any $x\in I$, we have
\begin{equation*}
I^{1-s}_{a+}[u](x)=\begin{cases}
0 & \text{ if } \ x\in(a,c] \\
1 & \text{ if } \ x\in(c,d] \\
\displaystyle\frac{1}{\Gamma(s)\Gamma(1-s)}\int_c^d (t-c)^{s-1}(x-t)^{-s}dt & \text{ if } \ x\in(d,b).
\end{cases}
\end{equation*}
In other words, we have
\begin{equation*}
I^{1-s}_{a+}[u](x) = \chi_J(x)+f(x)\chi_{(d,b)}(x) \ \text{ for } \Leb{1}\text{-a.e. } x \in I,
\end{equation*}
where 
\begin{equation*}
f(x):=\frac{1}{\Gamma(s)\Gamma(1-s)}\int_c^d (t-c)^{s-1}(x-t)^{-s}dt.
\end{equation*}
It is not difficult to see that 
\begin{equation*}
f(d)=\frac{1}{\Gamma(s)\Gamma(1-s)}\beta(s,1-s)=1,
\end{equation*}
and $f \in C([d,b))\cap C^\infty((d,b))\cap W^{1, 1}((d,b))$. Hence, we deduce that
\begin{equation*}
\mathcal{D}^s_{a+}[u]=\delta_c-\delta_d +f' \chi_{(d,b)}\Leb{1}+f(d)\delta_d=\delta_c+f' \Leb{1} \res (d, b)
\end{equation*} 
so that $I^{1-s}_{a+}[u]\in BV(I) \setminus W^{1, 1}(I)$. Thus, $u\in BV^s_{RL,a+}(I)\setminus W^{s, 1}_{RL, a+}(I)$.
\end{example}

\begin{proposition}
Let $s \in (0, 1)$. Then, the inclusion $W^{s, 1}_{RL, a+}(I) \emb BV^s_{RL,a+}(I)$ is strict. 
\end{proposition}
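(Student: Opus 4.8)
The continuous inclusion $W^{s,1}_{RL,a+}(I)\emb BV^{s}_{RL,a+}(I)$ is already contained in Proposition \ref{prop:BV_s_W_s_a}, so the only thing left to establish is that it is proper, i.e.\ to exhibit one function belonging to $BV^{s}_{RL,a+}(I)$ but not to $W^{s,1}_{RL,a+}(I)$. Such a witness has in fact already been produced in Example \ref{rem:BV^s_RL_BV_inclusion}: picking any subinterval $J=(c,d)$ with $a<c<d<b$ and setting $u$ equal to $(x-c)^{s-1}/\Gamma(s)$ on $J$ and zero elsewhere, one computes $I^{1-s}_{a+}[u]=\chi_{J}+f\chi_{(d,b)}$ with $f\in C([d,b))\cap W^{1,1}((d,b))$ and $f(d)=1$, whence $I^{1-s}_{a+}[u]\in BV(I)$ with $\mathcal{D}^{s}_{a+}[u]=\delta_{c}+f'\,\Leb{1}\res(d,b)$.

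The argument then runs as follows. First I would recall from Example \ref{rem:BV^s_RL_BV_inclusion} that this $u$ lies in $BV^{s}_{RL,a+}(I)$, since $I^{1-s}_{a+}[u]\in BV(I)$ by the computation just recalled (the only point to double-check is that the jump contributions of $\chi_{J}$ at $c$ and $d$ combine with the boundary value $f(d)=1$ exactly so as to cancel the spurious $\delta_{d}$, leaving the clean expression $\delta_{c}+f'\,\Leb{1}\res(d,b)$). Next I would observe that the measure $\mathcal{D}^{s}_{a+}[u]$ is \emph{not} absolutely continuous with respect to $\Leb{1}$, because it carries the Dirac mass $\delta_{c}$ with $c\in I$. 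Finally, by the characterization in Proposition \ref{prop:BV_s_W_s_a} — which states that a function of $BV^{s}_{RL,a+}(I)$ belongs to $W^{s,1}_{RL,a+}(I)$ if and only if its distributional Riemann-Liouville $s$-fractional derivative is absolutely continuous — the failure of $\mathcal{D}^{s}_{a+}[u]\ll\Leb{1}$ forces $u\notin W^{s,1}_{RL,a+}(I)$. Hence the inclusion is strict.

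I do not expect any serious obstacle here: the substantive content is entirely the computation of $I^{1-s}_{a+}[u]$ and of its distributional derivative, which is already carried out in Example \ref{rem:BV^s_RL_BV_inclusion} via the beta-function identity \eqref{eq:frac_int_critical_power}, and the logical step is a direct appeal to Proposition \ref{prop:BV_s_W_s_a}. The only minor care needed is bookkeeping at the endpoints $c$ and $d$ of $J$ (matching the jumps of $\chi_J$ against the value $f(d)=1$), which is routine. Thus the proof amounts essentially to assembling Example \ref{rem:BV^s_RL_BV_inclusion} and Proposition \ref{prop:BV_s_W_s_a}.
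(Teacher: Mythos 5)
Your proof is correct and is exactly the paper's argument: the paper also proves strictness by combining Example \ref{rem:BV^s_RL_BV_inclusion} (the witness $u$ with $\mathcal{D}^{s}_{a+}[u]=\delta_c+f'\,\Leb{1}\res(d,b)$) with the characterization in Proposition \ref{prop:BV_s_W_s_a}. The endpoint bookkeeping at $c$ and $d$ you flag is handled in the example itself, so nothing further is needed.
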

\begin{proof}
It is an immediate consequence of Proposition \ref{prop:BV_s_W_s_a} and Example \ref{rem:BV^s_RL_BV_inclusion}.
\end{proof}

Thanks to Example \ref{rem:BV^s_RL_BV_inclusion}, we see that, given $u\in BV^s_{RL,a+}(I)\setminus W^{s, 1}_{RL, a+}(I)$, the measure $\mathcal{D}^s_{a+}[u]$ can have a jump part. It is natural to ask whether it admits also a Cantor part, in general. To this purpose, we exhibit an example of $u\in BV^s_{RL,a+}(I)$ such that $I^{1-s}_{a+}[u]\in BV(I)\setminus SBV(I)$, where $SBV(I)$ is the space of {\em special functions of bounded variation}, for which $(D u)_{c} = 0$.

\begin{example}
	Consider the classical ternary Cantor function $C(x)$, and let $I=(0,1)$. It is well known that $C\in C^{0,\alpha_C}(\overline{I})\cap BV(I)$, where $\alpha_C:=\log_32$, and $DC$ is a singular measure without atoms which means that $DC=(DC)_c$; in particular, up to a multiplicative constant, $DC=\mathcal{H}^{\alpha_C}$, see {\it e.g.\ \cite[Example 3.34]{AFP}}.
	
	Now, since $C(0)=0$, we can use Proposition \ref{prop:holcontinuity} to conclude that $C$ is representable as the $(1-s)$-fractional integral of a function in $C_0^{0,\alpha_C+s-1}(\overline{I})$, provided $s\in(1-\alpha_C,1)$. This implies that there exists $u\in C_0^{0,\alpha_C+s-1}(\overline{I})$ such that $I^{1-s}_{0+}[u](x)=C(x)$, and so $u\in BV^s_{RL,0+}(I)$, with $\mathcal{D}^s_{0+}[u]=DC=(DC)_c$.
\end{example}

\section{Action of the fractional integral on Sobolev functions} \label{sec:Sob_act}

Now we analyze regularizing properties of the fractional integral when it acts on functions in the Sobolev space $W^{1,p}(I)$ for some $p\ge 1$.
We start with the following statement.

\begin{proposition}
	\label{prop:regsobolev}
	Let $1\le p<\infty$ and $s\in(0,1)$ such that $sp<1$. Then $I^{1-s}_{a+}$ is a continuous operator from $W^{1,p}(I)$ into $W^{1,p}(I)$, with
	\begin{equation} \label{eq:continuity_I_a_s_Sobolev}
	\|I^{1 - s}_{a+}[u]\|_{W^{1, p}(I)} \le \frac{(b-a)^{1-s}}{\Gamma(1-s)} \left (\frac{1}{1 - s} + \max\left\{1,\frac{1}{b - a}\right\} \frac{1}{(1 - sp)^{\frac{1}{p}}} \right ) \|u\|_{W^{1, p}(I)}
	\end{equation}
	for all $u \in W^{1, p}(I)$.
In addition, $I^{1-s}_{a+}$ is a continuous operator from $W^{1,p}_{a}(I)$ into $W^{1,p}(I)$ for all $s \in (0, 1)$ and $p \in [1, \infty]$, with
\begin{equation} \label{eq:continuity_I_a_s_Sobolev_1}
	\|I^{1 - s}_{a+}[u]\|_{W^{1, p}(I)} \le \frac{(b-a)^{1-s}}{\Gamma(2-s)} \|u\|_{W^{1, p}(I)} \text{ for all } u \in W^{1, p}_{a}(I).
	\end{equation}
	\end{proposition}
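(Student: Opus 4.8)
The plan is to reduce the whole statement to the explicit Volterra representation of the fractional integral on absolutely continuous functions. Since $W^{1,p}(I)\hookrightarrow AC(\overline{I})$ for every $p\in[1,\infty]$ by Remark \ref{rem:Sobolev_AC_emb}, any $u\in W^{1,p}(I)$ satisfies formula \eqref{eq:riluecap}, that is
\[
I^{1-s}_{a+}[u](x)=\frac{u(a)}{\Gamma(2-s)}(x-a)^{1-s}+\frac{1}{\Gamma(2-s)}\int_a^x u'(t)(x-t)^{1-s}\,dt .
\]
The first step is to differentiate this identity in $x$: the power term contributes $\frac{u(a)}{\Gamma(1-s)}(x-a)^{-s}$, while a short Fubini argument (showing that $x\mapsto\int_a^x u'(t)(x-t)^{1-s}\,dt$ equals, up to the constant $\Gamma(2-s)$, the primitive of $I^{1-s}_{a+}[u']$) proves that the second term is absolutely continuous with a.e. derivative $I^{1-s}_{a+}[u']$; equivalently one may invoke the weak-form identity already established inside the proof of Theorem \ref{result:BV_W_s1_RL} via \eqref{eq:RLecap}, which only uses $u\in W^{1,1}(I)$. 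In either case one obtains that $I^{1-s}_{a+}[u]$ is absolutely continuous on $\overline{I}$ with
\[
\frac{d}{dx}I^{1-s}_{a+}[u](x)=\frac{u(a)}{\Gamma(1-s)}(x-a)^{-s}+I^{1-s}_{a+}[u'](x)\qquad\text{for }\Leb{1}\text{-a.e. }x\in I .
\]

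For the first statement, the hypothesis $sp<1$ is exactly what makes $(\cdot-a)^{-s}\in L^p(I)$, and $I^{1-s}_{a+}[u']\in L^p(I)$ by Proposition \ref{prop:cont_BLNT}; hence the displayed derivative belongs to $L^p(I)$ and is the weak derivative of $I^{1-s}_{a+}[u]$, so $u\in W^{s,p}_{RL,a+}(I)$, and $I^{1-s}_{a+}$ is continuous by linearity once the quantitative bound is proved. The bound \eqref{eq:continuity_I_a_s_Sobolev} is then assembled from three elementary ingredients: the $L^p\!\to\!L^p$ estimate $\|I^{1-s}_{a+}[v]\|_{L^p(I)}\le\frac{(b-a)^{1-s}}{\Gamma(2-s)}\|v\|_{L^p(I)}$ coming from Remark \ref{rem:constant_Riesz_Thorin} applied with $1-s$ in place of $s$ (used both for $v=u$ and for $v=u'$); the explicit value $\|(\cdot-a)^{-s}\|_{L^p(I)}=(1-sp)^{-1/p}(b-a)^{(1-sp)/p}$; and the trace bound $|u(a)|\le\max\{1,\tfrac{1}{b-a}\}(b-a)^{1-1/p}\|u\|_{W^{1,p}(I)}$ of Remark \ref{rem:Sobolev_AC_emb} (or, directly, by averaging $u(a)=u(y)-\int_a^y u'$ over $y\in I$ and using H\"older). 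Summing the $L^p$ norm of $I^{1-s}_{a+}[u]$ and that of its derivative, and simplifying with $(b-a)^{(1-sp)/p}(b-a)^{1-1/p}=(b-a)^{1-s}$ and $\Gamma(2-s)=(1-s)\Gamma(1-s)$, yields precisely \eqref{eq:continuity_I_a_s_Sobolev}.

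For the second statement one uses that $u\in W^{1,p}_a(I)$ forces $u(a)=0$, so the singular term disappears: $I^{1-s}_{a+}[u](x)=\frac{1}{\Gamma(2-s)}\int_a^x u'(t)(x-t)^{1-s}\,dt$ and $\frac{d}{dx}I^{1-s}_{a+}[u]=I^{1-s}_{a+}[u']$. Consequently the restriction $sp<1$ is no longer needed, and since $I^{1-s}_{a+}$ maps $L^p(I)$ into $L^p(I)$ with operator norm at most $\frac{(b-a)^{1-s}}{\Gamma(2-s)}$ for \emph{every} $p\in[1,\infty]$ (Remark \ref{rem:constant_Riesz_Thorin}), one gets $I^{1-s}_{a+}[u]\in W^{1,p}(I)$ with $\|I^{1-s}_{a+}[u]\|_{W^{1,p}(I)}\le\frac{(b-a)^{1-s}}{\Gamma(2-s)}(\|u\|_{L^p(I)}+\|u'\|_{L^p(I)})$, which is \eqref{eq:continuity_I_a_s_Sobolev_1}. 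The one point that is not pure bookkeeping is showing that the a.e. pointwise derivative read off from \eqref{eq:riluecap} is genuinely the distributional derivative of $I^{1-s}_{a+}[u]$ — i.e. that $I^{1-s}_{a+}[u]$ is absolutely continuous, not merely a.e. differentiable; this is exactly what the Fubini argument from the first paragraph secures, after which everything reduces to the constant computations above.
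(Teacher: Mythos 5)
Your proposal is correct and follows essentially the same route as the paper: both rest on the representation $\frac{d}{dx}I^{1-s}_{a+}[u]=\frac{u(a)}{\Gamma(1-s)}(x-a)^{-s}+I^{1-s}_{a+}[u']$, the $L^p\to L^p$ bound of Remark \ref{rem:constant_Riesz_Thorin}, the explicit $L^p$ norm of $(\cdot-a)^{-s}$ under $sp<1$, and the trace bound of Remark \ref{rem:Sobolev_AC_emb}, with identical constant bookkeeping. The only (harmless) variation is that you justify the differentiation formula directly from \eqref{eq:riluecap} via Fubini and the semigroup law, whereas the paper invokes the $BV$ extension \eqref{eq:repr_form_BV} of Theorem \ref{result:BV_W_s1_RL}; you note this alternative yourself, and your direct argument is, if anything, more self-contained.
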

	\begin{proof}
		Thanks to Proposition \ref{prop:cont_BLNT} and Remark \ref{rem:constant_Riesz_Thorin}, $I^{1-s}_{a+}[u]\in L^p(I)$, and
		\begin{equation}
		\label{eq:continLp}
		\left\|I^{1-s}_{a+}[u]\right\|_{L^p(I)}\leq \frac{(b-a)^{1-s}}{\Gamma(2-s)}\left\|u\right\|_{L^p(I)}.
		\end{equation} 		
		Now, we prove that $D^{s}_{a+}[u]\in L^p(I)$. Recalling Remark \ref{rem:Sobolev_AC_emb}, we have $u\in W^{1,p}(I) \emb BV(I)$; hence, using \cref{result:BV_W_s1_RL}, for all $s\in(0,1)$ we get 
\begin{equation} \label{eq:comm_D_s_I_Sobolev}
		D^{s}_{a+}[u](x)=\frac{u(a)}{\Gamma(1-s)}\frac{1}{(x-a)^s}+I^{1-s}_{a+}[u'](x),
\end{equation}
		where $u'$ denotes the weak derivative of $u$.
		Therefore, again by Proposition \ref{prop:cont_BLNT} and Remark \ref{rem:constant_Riesz_Thorin} we get
\begin{equation*}
	 \|D^s_{a+}[u]\|_{L^p(I)}\leq \frac{|u(a)|}{\Gamma(1 - s)} \left (\int_a^b\frac{dx}{(x-a)^{sp}}\right)^{\frac{1}{p}}+\frac{(b-a)^{1-s}}{\Gamma(2-s)}\left\|u'\right\|_{L^p(I)}.
		\end{equation*}
		Now, since $sp<1$, the first term in the right hand side is finite, and we obtain
\begin{equation}
		\label{eq:classici}
		\left\|D^s_{a+}[u]\right\|_{L^p(I)}\leq \frac{(b - a)^{\frac{1}{p} - s}}{(1 - sp)^{\frac{1}{p}} \Gamma(1 - s)}|u(a)|+\frac{(b-a)^{1-s}}{\Gamma(2-s)}\left\|u'\right\|_{L^p(I)}.
		\end{equation}
Thus, summing up \eqref{eq:continLp} and \eqref{eq:classici}, and exploiting Remark \ref{rem:Sobolev_AC_emb}, we deduce \eqref{eq:continuity_I_a_s_Sobolev}. Finally, if $u \in W^{1, p}_{a}(I)$, we have $u(a) = 0$, so that \eqref{eq:comm_D_s_I_Sobolev} reduces to $D^{s}_{a+}[u] = I^{1 - s}_{a+}[u']$. Therefore, Proposition \ref{prop:cont_BLNT} and Remark \ref{rem:constant_Riesz_Thorin} imply
\begin{equation*}
\|I^{1 - s}_{a+}[u]\|_{W^{1, p}(I)} = \|I^{1 - s}_{a+}[u]\|_{L^{p}(I)} + \|I^{1 - s}_{a+}[u']\|_{L^{p}(I)} \le \frac{(b-a)^{1-s}}{\Gamma(2-s)} \left ( \|u\|_{L^{p}(I)} + \|u'\|_{L^{p}(I)}\right ).
\end{equation*}
This concludes the proof.
	\end{proof}

\begin{corollary}
	\label{cor:inclusionofRLspaces}
	Let $1\le p\le q<\infty$ and $r,s\in(0,1)$ such that $sp<1$ and $r>s+\frac{1}{p'}$, where $p'$ denotes the H\"older conjugate of $p$. Then we have 
	$$
	W^{r,q}_{RL,a+}(I) \emb W^{s,p}_{RL,a+}(I).
	$$
	\end{corollary}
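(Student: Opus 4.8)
The plan is to reduce the inclusion $W^{r,q}_{RL,a+}(I) \emb W^{s,p}_{RL,a+}(I)$ to a chain of simpler embeddings, two of which are already at hand from the excerpt and one of which is the content of Proposition~\ref{prop:newprop}. Concretely, given $u \in W^{r,q}_{RL,a+}(I)$, the object we directly control is $v := I^{r}_{a+}[u] - $ wait, more precisely: by definition $I^{1-r}_{a+}[u] \in W^{1,q}(I)$. The key structural observation is the semigroup law (\cref{lem:semigrouplaw}): since $(1-r) + (r - s) = 1 - s \le 1$, we have
\begin{equation*}
I^{1-s}_{a+}[u] = I^{(1-r)+(r-s)}_{a+}[u] = I^{r-s}_{a+}\bigl[I^{1-r}_{a+}[u]\bigr],
\end{equation*}
provided $r > s$ (which holds since $r > s + \frac{1}{p'} \ge s$). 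So the question becomes: does $I^{r-s}_{a+}$ map $I^{1-r}_{a+}[u] \in W^{1,q}(I)$ into $W^{1,p}(I)$? Equivalently, writing $w := I^{1-r}_{a+}[u] \in W^{1,q}(I)$, we want $I^{r-s}_{a+}[w] \in W^{1,p}(I)$, i.e.\ $w \in W^{\sigma,p}_{RL,a+}(I)$ for $\sigma := 1 - (r-s) = 1 - r + s \in (0,1)$ — and this is exactly the type of statement Proposition~\ref{prop:newprop} delivers, modulo getting from $W^{1,q}(I)$ down to an appropriate $W^{\tau,p}(I)$.

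**The key steps, in order, would be:**
First, record that $W^{1,q}(I) \emb W^{\tau,q}(I)$ for every $\tau \in (0,1)$ (standard, e.g.\ \cite[Proposition~2.1]{MR2944369}), and then choose $\tau$ close enough to $1$ that $\tau > \sigma + \frac{1}{p'} = 1 - r + s + \frac{1}{p'}$; this is possible precisely because the hypothesis $r > s + \frac{1}{p'}$ forces $1 - r + s + \frac{1}{p'} < 1$. Second, apply Proposition~\ref{prop:newprop} with the roles played by $\sigma = 1 - r + s$ in place of $s$, with summability exponent $p$, and with $\tau$ in place of $r$: note $\sigma < \frac{1}{p}$ must be checked — indeed $\sigma = 1 - r + s < 1 - (s + \frac{1}{p'}) + s = 1 - \frac{1}{p'} = \frac{1}{p}$, so the hypothesis of Proposition~\ref{prop:newprop} is met. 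This yields $W^{\tau,p}(I) \emb W^{\sigma,p}_{RL,a+}(I)$ continuously; but we need $w \in W^{\tau,p}(I)$, not merely $W^{\tau,q}(I)$. Third, bridge the summability gap: since $q \ge p$ and $I$ is bounded, $L^q(I) \emb L^p(I)$, and more to the point $W^{\tau,q}(I) \emb W^{\tau,p}(I)$ on a bounded interval (this follows from \cite[Proposition~1.4]{MR3753604} used in the paper, or directly from Hölder on the Gagliardo seminorm). Concatenating, $w \in W^{1,q}(I) \emb W^{\tau,q}(I) \emb W^{\tau,p}(I) \emb W^{\sigma,p}_{RL,a+}(I)$, all continuously. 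Fourth, translate back: $I^{r-s}_{a+}[w] = I^{1-s}_{a+}[u] \in W^{1,p}(I)$, which is exactly the statement $u \in W^{s,p}_{RL,a+}(I)$, and tracking the constants through the chain gives continuity of the embedding.

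**The main obstacle** I anticipate is not conceptual but bookkeeping: one must verify that the exponent inequalities line up so that Proposition~\ref{prop:newprop} is applicable — specifically that $\sigma = 1 - r + s$ lies in $(0, 1/p)$ and that there is room to pick the intermediate exponent $\tau \in (\sigma + \frac{1}{p'},\, 1)$. Both reduce to the single hypothesis $r > s + \frac{1}{p'}$, so the argument is tight but clean. A secondary subtlety is making sure $r - s \le 1$ and $(1-r) + (r-s) \le 1$ so that the semigroup law \cref{lem:semigrouplaw} genuinely applies: the first is automatic since $r, s \in (0,1)$, and the second is $1 - s \le 1$. One should also note the degenerate-looking case $p = 1$, where $p' = \infty$ and the hypothesis reads $r > s$; there Corollary~\ref{cor:besov} (or Proposition~\ref{prop:propopop} together with $W^{\tau,q}(I) \emb W^{s,1}(I)$) already covers the step that Proposition~\ref{prop:newprop} handles for $p > 1$, so the proof specializes correctly. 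Finally, since only continuity of a composition of continuous linear maps is claimed, no quantitatively sharp constant is needed — the explicit bounds in Propositions~\ref{prop:regsobolev} and~\ref{prop:newprop} could be multiplied together if desired, but the qualitative statement follows immediately.
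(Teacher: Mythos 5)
Your overall architecture matches the paper's at the decisive point: both proofs hinge on the semigroup law (\cref{lem:semigrouplaw}) to write $I^{1-s}_{a+}[u]=I^{r-s}_{a+}[w]$ with $w:=I^{1-r}_{a+}[u]$, and on the arithmetic $1-r+s<\frac{1}{p}\iff r>s+\frac{1}{p'}$. However, your third step contains a genuinely false claim: the embedding $W^{\tau,q}(I)\emb W^{\tau,p}(I)$ for $q>p$ at the \emph{same} fractional smoothness $\tau$ does not hold --- this is precisely the Mironescu--Sickel non-embedding \cite{MR3357858} that the paper records in \cref{rem:mironsic}. Neither of your proposed justifications survives scrutiny: \cite[Proposition 1.4]{MR3753604}, as used in \cref{cor:besov} and \cref{prop:newprop}, lowers the integrability index only at the price of a \emph{strict} drop in the smoothness index, while H\"older applied to the Gagliardo seminorm leaves a factor $|x-y|^{-1}$, which is not integrable on $I\times I$. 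The repair is immediate and costs nothing: since $w\in W^{1,q}(I)$ is an integer-order Sobolev function, you do have $W^{1,q}(I)\emb W^{1,p}(I)$ on the bounded interval $I$ (H\"older applied to $w$ and $w'$ separately), and then $W^{1,p}(I)\emb W^{\tau,p}(I)$ for every $\tau\in(0,1)$ by \cite[Proposition 2.1]{MR2944369}. With the chain reordered as $W^{1,q}(I)\emb W^{1,p}(I)\emb W^{\tau,p}(I)$, your remaining bookkeeping (that $\sigma=1-r+s\in\left(0,\frac{1}{p}\right)$ and that $\tau$ can be chosen in $\left[\sigma+\frac{1}{p'},1\right)$) is correct, \cref{prop:newprop} applies, and translating back via the semigroup law gives $I^{1-s}_{a+}[u]\in W^{1,p}(I)$ as needed.

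Once patched, your route is correct but takes a detour the paper avoids. The paper first reduces to $q=p$ at the level of the Riemann--Liouville spaces ($W^{r,q}_{RL,a+}(I)\emb W^{r,p}_{RL,a+}(I)$) and then applies \cref{prop:regsobolev} directly: the continuity of $I^{1-\gamma}_{a+}$ from $W^{1,p}(I)$ into $W^{1,p}(I)$ for $\gamma p<1$, with $\gamma=1-r+s$, acting on $v=I^{1-r}_{a+}[u]\in W^{1,p}(I)$. You instead push $w$ down into a fractional space $W^{\tau,p}(I)$ so as to invoke \cref{prop:newprop}; this is legitimate but weaker bookkeeping, since \cref{prop:regsobolev} already operates on the integer-order space you start from and is the result proved immediately before this corollary.
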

	\begin{proof}
		Since $W^{r,q}_{RL,a+}(I) \emb W^{r,p}_{RL,a+}(I)$, we are left to prove that $I^{1-s}_{a+}[u]\in W^{1,p}(I)$ for $u\in W^{r,p}_{RL,a+}(I)$. We notice that, thanks to Lemma \ref{lem:semigrouplaw},
		$$
		I^{1-s}_{a+}[u](x)=I^{r-s}_{a+}[I^{1-r}_{a+}[u]](x)=I^{1-\gamma}_{a+}[v](x),
		$$
		where $v:=I^{1-r}_{a+}[u]$ and $\gamma:=1-r+s$.
		Thanks to Proposition \ref{prop:regsobolev}, since $v\in W^{1,p}(I)$ we have $I^{1-\gamma}_{a+}[v]\in W^{1,p}(I)$ provided $\gamma p<1$, and this condition holds since $r>s+\frac{1}{p'}$.
		\end{proof}

\begin{remark} \label{rem:casopinfinito}
We notice that Proposition \ref{prop:regsobolev} covers the case $p=\infty$ if and only if $u(a)=0$. 
On the other hand, if $u(a)\neq 0$ we have neither continuous embedding, nor inclusion. Indeed, let us consider $I:=(0,1)$ and $u(x):=\cos(x)\in W^{1,\infty}(I)$. By \cref{result:BV_W_s1_RL}, we see that
	$$
	D^s_{0+}[u](x)=\frac{1}{\Gamma(1-s)}\left(\frac{1}{x^s}-\int_0^x\frac{\sin(t)}{(x-t)^s} \, dt\right).
	$$
We notice that $x^{-s}$ is not bounded when $x$ is close to 0, while it is easy to see that
	\begin{equation*}
\left | \int_{0}^{x} \frac{\sin(t)}{(x - t)^{s}} \, dt \right | \le	\int_{0}^{x} \frac{|\sin(t)|}{(x - t)^{s}} \, dt \le  \int_{0}^{x} \frac{1}{(x - t)^{s}} \, dt = \frac{x^{1 - s}}{1 - s} \le \frac{1}{1 - s}.
	\end{equation*}
	Thus, we conclude that $D^s_{0+}[u]\notin L^\infty(I)$.
\end{remark}

\begin{remark}
	We notice that the continuous embedding given by Corollary \ref{cor:inclusionofRLspaces} can be obtained as a byproduct of \cite[Theorem 31]{MR3144452}, which shows that the embedding is compact.
\end{remark}

Now, we show that the fractional integral actually improves the (weak) differentiability of a Sobolev function. To this purpose, we start with a simple remark.

\begin{remark}
	\label{rem:depending}
	Depending on the summability of a Sobolev function $u$ we notice that $I^{1-s}_{a+}[u]$ enjoys different improvements in regularity. In particular, we distinguish the case $p=1$ and the case $p>1$.
	\begin{enumerate}
		\item{Case $p=1$}
		
		If $u\in W^{1,1}(I)$, using Sobolev Embedding Theorem $u\in L^q(I)$ for any $1\le q\le\infty$, and so, thanks to Proposition \ref{prop:cont_BLNT}, $\displaystyle I^{1-s}_{a+}[u]\in\bigcap_{q>1/s}C^{0,s-\frac{1}{q}}(\overline{I})$.
		\item{Case $p>1$}
		
		If $u\in W^{1,p}(I)$, again by Sobolev Embedding Theorem, we have $u\in C^{0,1-\frac{1}{p}}(\overline{I})$.
		
		Using Proposition \ref{prop:holcontinuity}, for any $u\in C^{0,1-\frac{1}{p}}_{a}(\overline{I})$, we have 
		\begin{itemize}
			\item $\displaystyle I^{1-s}_{a+}[u]\in C^{0,2-s-\frac{1}{p}}_{a}(\overline{I})$ if $s+\frac{1}{p}>1$,
			\item $\displaystyle I^{1-s}_{a+}[u]\in H^{1,1}_{a}(\overline{I})$ if $s+\frac{1}{p}=1$,
			\item $\displaystyle I^{1-s}_{a+}[u]\in C^{1,1-s-\frac{1}{p}}_{a}(\overline{I})$ if $s+\frac{1}{p}<1$.
		\end{itemize}
		In the third case, it follows that $D^{s}_{a+}[u] \in C^{0,1-s-\frac{1}{p}}(\overline{I})$.
	\end{enumerate}
\end{remark}

Now, we are able to prove that when we apply the fractional integral $I^{1-s}_{a+}$ to a function in $W^{1,p}(I)$ for some $p>1$, we gain more differentiability. This means that the function $I^{1-s}_{a+}[u]$ belongs to a higher order fractional Sobolev space.

\begin{proposition}
	\label{prop:impregfrac}
	Let $p>1$ and $s\in\left(0,\min\left \{\frac{1}{p},\frac{p-1}{2p}\right \}\right)$. For all $u\in W^{1,p}_{a}(I)$, we have $I^{1-s}_{a+}[u]\in W^{s+1,p}(I)$.
	\begin{proof}
		We notice that the conditions $sp<1$ and $s+\frac{1}{p}<1$ are satisfied, and so, by Proposition \ref{prop:regsobolev} and Remark \ref{rem:depending}, we get $I^{1-s}_{a+}[u] \in W^{1,p}(I)\cap C^{1,1-s-\frac{1}{p}}_{a}(\overline{I})$.
		
		Now, we prove that $D^s_{a+}[u] \in W^{s,p}(I)$. Namely, we have to prove that
		$$
		\int_a^b\int_a^b\frac{|D^s_{a+}[u](x)-D^s_{a+}[u](y)|^p}{|x-y|^{sp+1}}dxdy<\infty.
		$$
		Now, we use the H$\"o$lder continuity of $D^s_{a+}[u]$ to say that
		$$
		|D^s_{a+}[u](x)-D^s_{a+}[u](y)|^p\leq C|x-y|^{p-sp-1},
		$$
		for some $C>0$ and for any $x,y\in I$.
		
		Therefore, we have
		$$
		\int_a^b\int_a^b\frac{|D^s_{a+}[u](x)-D^s_{a+}[u](y)|^p}{|x-y|^{sp+1}}dxdy\leq C\int_a^b\int_a^b\frac{1}{|x-y|^{2sp-p+2}}dxdy,
		$$
		where the integral on the right hand side converges since $s<\frac{p-1}{2p}$.
	\end{proof}
\end{proposition}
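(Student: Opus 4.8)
The plan is to peel off the definition of the higher-order fractional Sobolev space: writing $s+1 = 1 + s$ with $s \in (0,1)$, the assertion $I^{1-s}_{a+}[u] \in W^{s+1,p}(I)$ means precisely that $I^{1-s}_{a+}[u] \in W^{1,p}(I)$ and that its weak derivative belongs to the Gagliardo--Slobodeckij space $W^{s,p}(I)$. The first of these is essentially for free: the hypothesis $s < 1/p$ gives $sp < 1$, so Proposition \ref{prop:regsobolev} applies and yields $I^{1-s}_{a+}[u] \in W^{1,p}(I)$; moreover, since $u \in W^{1,p}_a(I)$ vanishes at $a$, formula \eqref{eq:comm_D_s_I_Sobolev} collapses to $D^s_{a+}[u] = I^{1-s}_{a+}[u']$, which is the weak derivative of $I^{1-s}_{a+}[u]$ and lies in $L^p(I)$ by Proposition \ref{prop:cont_BLNT}. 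It remains to control the seminorm $[D^s_{a+}[u]]_{W^{s,p}(I)}$.

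For that, the first step is to upgrade $D^s_{a+}[u]$ to a H\"older-continuous function. By Morrey's embedding $W^{1,p}_a(I) \emb C^{0,1-1/p}_a(\overline{I})$, and the arithmetic point is that $s < \tfrac{p-1}{2p} = \tfrac{1}{2p'} < 1 - \tfrac1p$, so the H\"older exponent $1-\tfrac1p$ of $u$ strictly exceeds $s$ and in particular $s + \tfrac1p < 1$. Hence Proposition \ref{prop:holcontinuity}, in the regime $\alpha + (1-s) > 1$ (applied to the operator $I^{1-s}_{a+}$, as already recorded in Remark \ref{rem:depending}), gives $I^{1-s}_{a+}[u] \in C^{1,\,1-s-1/p}_a(\overline{I})$; differentiating, $D^s_{a+}[u] \in C^{0,\gamma}(\overline{I})$ with $\gamma := 1 - \tfrac1p - s > 0$.

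The last step is to feed this bound into the double integral. From $|D^s_{a+}[u](x) - D^s_{a+}[u](y)|^p \le C|x-y|^{\gamma p} = C|x-y|^{p-1-sp}$ we obtain
\[
[D^s_{a+}[u]]_{W^{s,p}(I)}^p \le C \int_a^b\int_a^b |x-y|^{(p-1-sp)-(sp+1)}\,dx\,dy = C \int_a^b\int_a^b |x-y|^{\,p-2-2sp}\,dx\,dy,
\]
and the last quantity is finite exactly when $p - 2 - 2sp > -1$, i.e.\ $s < \tfrac{p-1}{2p}$ --- precisely the standing hypothesis. Together with $D^s_{a+}[u] \in L^p(I)$ this gives $D^s_{a+}[u] \in W^{s,p}(I)$, hence $I^{1-s}_{a+}[u] \in W^{s+1,p}(I)$. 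The only delicate aspects are bookkeeping ones: checking that the two constraints on $s$ --- one to make $I^{1-s}_{a+}$ bounded on $W^{1,p}$ and to make the endpoint H\"older estimate of Proposition \ref{prop:holcontinuity} applicable, the other to make the double integral converge --- are both subsumed by $s \in (0,\min\{1/p,(p-1)/(2p)\})$, and correctly tracking that it is the order $1-s$ (not $s$) that is plugged into Proposition \ref{prop:holcontinuity}. I expect the convergence of the double integral to be the genuinely sharp step, since it is what forces the bound $(p-1)/(2p)$.
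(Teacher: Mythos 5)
Your proposal is correct and follows essentially the same route as the paper's proof: boundedness of $I^{1-s}_{a+}$ on $W^{1,p}(I)$ for $sp<1$, the H\"older upgrade $D^s_{a+}[u]\in C^{0,1-s-\frac1p}(\overline{I})$ via Proposition \ref{prop:holcontinuity} (as in Remark \ref{rem:depending}), and insertion of that bound into the Gagliardo--Slobodeckij seminorm, with convergence of the double integral governed exactly by $s<\frac{p-1}{2p}$. The exponent bookkeeping ($p-2-2sp>-1$) matches the paper's computation.
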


\begin{corollary}
	Let $p>1$, $s\in\left(0,\min\left \{\frac{1}{p},\frac{p-1}{2p}\right \}\right)$ and $u \in W^{1, p}_{a}(I)$. Then we have
	\begin{equation*}
	u\in W^{s,r}_{RL,a+}(I) \text{ for all } r\in\left[1,\frac{p}{1-sp}\right].
	\end{equation*}
	\begin{proof}
	By Proposition \ref{prop:impregfrac}, we have $I^{1-s}_{a+}[u]\in W^{s+1,p}(I)$, so that $D^{s}_{a+}[u] \in W^{s, p}(I)$. Hence, thanks to the fractional Sobolev Embedding, since $sp<1$, we have $D^s_{a+}[u]\in L^r(I)$ for all $r\in\left[1,\frac{p}{1-sp}\right]$. Therefore, $I^{1-s}_{a+}[u]$ belongs to $W^{1,r}(I)$ for all $r\in\left[1,\frac{p}{1-sp}\right]$, and this proves the claim. 
		\end{proof} 
\end{corollary}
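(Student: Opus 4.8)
The plan is to deduce the corollary by feeding the improved differentiability of $I^{1-s}_{a+}[u]$ obtained in Proposition \ref{prop:impregfrac} into the fractional Sobolev embedding theorem, and then reading off the conclusion from the very definition of $W^{s,r}_{RL,a+}(I)$. First I would apply Proposition \ref{prop:impregfrac}: the hypothesis $s \in \left(0, \min\left\{\frac1p, \frac{p-1}{2p}\right\}\right)$ guarantees in particular $sp < 1$ and $s + \frac1p < 1$, so that for $u \in W^{1,p}_{a}(I)$ we have $I^{1-s}_{a+}[u] \in W^{s+1,p}(I)$. By the definition of higher order fractional Sobolev spaces recalled in Section \ref{sec:prel}, this means precisely that $I^{1-s}_{a+}[u] \in W^{1,p}(I)$ and that its weak derivative $D^{s}_{a+}[u]$ belongs to $W^{s,p}(I)$.

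Next I would invoke the fractional Sobolev embedding theorem (see e.g. \cite[Theorem 6.7]{MR2944369}), which applies since $sp < 1$: it gives $W^{s,p}(I) \emb L^{r}(I)$ for every $r \in \left[1, \frac{p}{1-sp}\right]$, the critical value $r = \frac{p}{1-sp}$ being admissible because $I$ is a bounded interval (for the subcritical range this also follows from the embedding into the critical Lebesgue space together with H\"older's inequality on the finite-measure set $I$). Therefore $D^{s}_{a+}[u] \in L^{r}(I)$ for all such $r$.

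To close the argument, I would observe that $I^{1-s}_{a+}[u] \in W^{1,p}(I)$ on a bounded interval forces $I^{1-s}_{a+}[u]$ to admit an essentially bounded (indeed absolutely continuous) representative by Remark \ref{rem:Sobolev_AC_emb}, hence $I^{1-s}_{a+}[u] \in L^{r}(I)$ for every $r \in [1,\infty)$. Combining this with $D^{s}_{a+}[u] \in L^{r}(I)$ from the previous step yields $I^{1-s}_{a+}[u] \in W^{1,r}(I)$ for all $r \in \left[1, \frac{p}{1-sp}\right]$, which is exactly the statement $u \in W^{s,r}_{RL,a+}(I)$ by Definition \ref{def:W_sp_RL}. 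I do not expect any genuine difficulty here: the corollary is a direct consequence of Proposition \ref{prop:impregfrac}, and the only mild point of care is the bookkeeping of the exponents -- in particular verifying that the critical exponent $\frac{p}{1-sp}$ is attained in the fractional embedding and that $I^{1-s}_{a+}[u]$ itself lies in every $L^r(I)$, both of which hold precisely because $I$ is bounded.
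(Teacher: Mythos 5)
Your proof is correct and follows essentially the same route as the paper: Proposition \ref{prop:impregfrac} gives $I^{1-s}_{a+}[u]\in W^{s+1,p}(I)$, hence $D^{s}_{a+}[u]\in W^{s,p}(I)$, and the fractional Sobolev embedding (valid up to the critical exponent on a bounded interval since $sp<1$) yields $D^{s}_{a+}[u]\in L^{r}(I)$ for all $r\in\left[1,\frac{p}{1-sp}\right]$, whence $I^{1-s}_{a+}[u]\in W^{1,r}(I)$. The extra bookkeeping you supply (that $I^{1-s}_{a+}[u]$ itself, and for that matter $u\in W^{1,p}(I)\emb L^{\infty}(I)$, lie in every $L^{r}(I)$) is a harmless and in fact slightly more careful completion of the same argument.
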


\section{Sobolev-type embedding theorems for $W^{s,p}_{RL,a+}(I)$ and $BV^{s}_{RL,a+}(I)$} \label{sec:Sob_type_emb}

In this section we show a result analogous to the Sobolev embedding Theorem for Riemann-Liouville fractional Sobolev spaces. To the knowledge of the authors, this is an original result in this setting, which improves \cite[Proposition 7]{MR3311433}. We refer the reader {\it e.g.} \cite[Chapter 4]{MR2424078} for the classical Sobolev embedding Theorem for Sobolev spaces of integer order or \cite[Theorem 6.7]{MR2944369} for Sobolev spaces of fractional order.

\begin{theorem}[Riemann-Liouville fractional Sobolev embedding]
	\label{result:RLfracsobemb}
	Let $s\in(0,1)$, $1\le p \le\infty$ and $u\in W^{s,p}_{RL,a+}(I)$. We have the following cases:
	\begin{enumerate}
		\item if $p=1$, then $u\in L^{\frac{1}{1-s},\infty}(I)$, and in particular $u\in L^r(I)$ for any $r\in \left [1,\frac{1}{1-s} \right)$,
		\item if $p > 1$ and $u\in W^{s,p}_{RL,a+}(I) \cap I_{a+}^{s}(L^{1}(I))$, then 
		\begin{enumerate}
		\item if $1<p<\frac{1}{s}$, $u\in L^r(I)$ for any $r\in \left [1,\frac{p}{1-sp}\right ]$,
		\item if $sp=1$, $u\in L^r(I)$ for any $r\in[1,\infty)$,
		\item if $sp>1$, $u\in C^{0,\beta}(\overline{I})$ for any $\beta\in \left [0,s-\frac{1}{p} \right ]$.
		\end{enumerate}
	\end{enumerate}
	\end{theorem}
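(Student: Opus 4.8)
The plan is to reduce the whole statement to the continuity properties of $I^{s}_{a+}$ collected in Proposition \ref{prop:cont_BLNT}, using the Fundamental Theorem of Calculus for Riemann--Liouville derivatives (Lemma \ref{lem:invinteder}) in order to express $u$ itself as a fractional integral. The starting observation is that, since $I$ is bounded, $W^{s,p}_{RL,a+}(I)\subseteq W^{s,1}_{RL,a+}(I)$ and, for $u\in W^{s,p}_{RL,a+}(I)$, the weak derivative $D^{s}_{a+}[u]$ is well defined and lies in $L^{p}(I)$; this is the object whose integrability governs the embedding.

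For case (1) I would start from formula \eqref{eq:invinteder_2}, which gives
\[
u(x)=I^{s}_{a+}[D^{s}_{a+}[u]](x)+\frac{I^{1-s}_{a+}[u](a)}{\Gamma(s)}(x-a)^{s-1}\qquad\text{for }\Leb{1}\text{-a.e. }x\in I .
\]
Since $D^{s}_{a+}[u]\in L^{1}(I)$, the first summand belongs to $L^{\frac{1}{1-s},\infty}(I)$ by point (2) of Proposition \ref{prop:cont_BLNT}. For the second summand, a direct computation of the distribution function shows that $x\mapsto(x-a)^{s-1}$ lies in $L^{\frac{1}{1-s},\infty}(I)$ (its distribution function is comparable to $t^{-\frac{1}{1-s}}$). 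As $L^{\frac{1}{1-s},\infty}(I)$ is a quasi-normed vector space, $u\in L^{\frac{1}{1-s},\infty}(I)$, and then $u\in L^{r}(I)$ for every $r\in[1,\frac{1}{1-s})$ by Lemma \ref{lem:embed_L_p_weak}.

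For case (2) the extra hypothesis $u\in I^{s}_{a+}(L^{1}(I))$ is exactly what removes the singular boundary term: by Lemma \ref{lem:image_fract_int_Sob} (equivalently Lemma \ref{lem:representcriteri}) it forces $I^{1-s}_{a+}[u](a)=0$, so \eqref{eq:invinteder_3} yields $u=I^{s}_{a+}[D^{s}_{a+}[u]]$ a.e., with $D^{s}_{a+}[u]\in L^{p}(I)$. It then suffices to feed this into Proposition \ref{prop:cont_BLNT}: point (3) gives (2a) when $1<p<\frac1s$, point (5) gives (2b) when $sp=1$, and points (4) and (6) give (2c) when $sp>1$ (the case $p=\infty$ being covered by (6), with the convention $s-\frac1p=s$); the passage from $C^{0,s-\frac1p}(\overline{I})$ to $C^{0,\beta}(\overline{I})$ for $\beta\le s-\frac1p$ is immediate on the bounded set $\overline{I}$.

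The only genuinely non-automatic point is the borderline weak-$L^{\frac{1}{1-s}}$ membership of the power $(x-a)^{s-1}$ in case (1) — the ordinary $L^{p}$ scale just misses it, which is precisely why the conclusion there must be stated in the weak space — together with the observation that in case (2) the hypothesis $u\in I^{s}_{a+}(L^{1}(I))$ cannot be dropped: without it $u$ may carry the term $(x-a)^{s-1}$, whose integrability is capped strictly below $\frac{1}{1-s}$, which would defeat the stronger conclusions (2a)--(2c).
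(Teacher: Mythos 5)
Your proof is correct and follows essentially the same route as the paper: the representation formulas of Lemma \ref{lem:invinteder}, the weak-type bound for $I^{s}_{a+}$ on $L^{1}$, the direct verification that $(\cdot-a)^{s-1}\in L^{\frac{1}{1-s},\infty}(I)$, and Proposition \ref{prop:cont_BLNT} for the case $p>1$ after using $L^{1}$-representability to kill the boundary term. Your closing remarks on the sharpness of the weak space in case (1) and the necessity of the hypothesis $u\in I^{s}_{a+}(L^{1}(I))$ in case (2) are a welcome addition, matching the remark the paper places immediately after the theorem.
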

	\begin{proof}
If $u \in W^{s, 1}_{RL, a+}(I)$, by Lemma \ref{lem:invinteder} we have
\begin{equation*}
		u(x) = I^s_{a+}[D^s_{a+}[u]](x)+\frac{I^{1-s}_{a+}[u](a)}{\Gamma(s)}(x-a)^{s-1} \ \text{ for } \Leb{1}\text{-a.e. } x \in I.
\end{equation*}
It is clear that $(\cdot - a)^{s - 1} \in L^{\frac{1}{1 - s}, \infty}(I)$, while Lemma \ref{lem:weakpq} implies $I^s_{a+}[D^s_{a+}[u]] \in L^{\frac{1}{1 - s}, \infty}(I)$, since $D^s_{a+}[u] \in L^{1}(I)$. Then, it is enough to apply Lemma \ref{lem:embed_L_p_weak} to obtain point (1). Then, for $p > 1$ we use the additional assumption $u\in I^s_{a+}(L^1(I))$ together with Lemma \ref{lem:invinteder} to obtain the improved representation formula
\begin{equation*}
	u(x) = I^s_{a+}[D^s_{a+}[u]](x) \ \text{ for } \Leb{1}\text{-a.e. } x \in I.
\end{equation*}
In addition, $D^s_{a+}[u]\in L^p(I)$ since $u\in W^{s,p}_{RL,a+}(I)$. Therefore, the result follows directly from points 3, 4 and 5 of Proposition \ref{prop:cont_BLNT}.
	\end{proof}
	
\begin{remark}
	If we remove the $L^1$-representability assumption for the case $p > 1$ in Theorem \ref{result:RLfracsobemb}, we still obtain an improvement in summability if and only if $1 < p < \frac{1}{1 - s}$ and $s \in \left (\frac{1}{2}, 1 \right)$.
Indeed, if $u\in W^{s,p}_{RL,a+}(I) \setminus I_{a+}^{s}(L^{1}(I))$ for some $p > 1$, we clearly have $u \in W^{s,1}_{RL,a+}(I) \setminus I_{a+}^{s}(L^{1}(I))$, so that $u\in L^r(I)$ for any $r\in \left [1,\frac{1}{1-s} \right )$. On the other hand,  the term $(x - a)^{s - 1}$ in the representation formula \eqref{eq:invinteder_2} prevents us from obtaining a better result, since $(\cdot - a)^{s - 1} \in L^{r}(I)$ if and only if $1 \le r < \frac{1}{1 - s}$. Thus, we gain summability if and only if $1 < p < \frac{1}{1 - s}$. Let us now consider separately the cases $sp < 1$ and $sp \ge 1$. If $1 < p < \frac{1}{s}$, we obtain an improvement in summability if and only if $\left(p,\frac{1}{1-s}\right)\supseteq (p,\frac{1}{s})$. Then, it is enough to notice that
\begin{equation*}
\frac{1}{1 - s} > \frac{1}{s} \ \text{ if and only if } \ \frac{1}{2} < s < 1.
\end{equation*} 
On the other hand, in the case $sp \ge 1$, we have 
\begin{equation*}
\frac{1}{s} \le p < \frac{1}{1 - s},
\end{equation*}
so that it must be again $s \in \left (\frac{1}{2}, 1 \right)$.
\end{remark}

In addition, we can prove a similar results for functions with left Riemann-Liouville $s$-fractional bounded variation, which can be seen as the (one dimensional) ``Riemann-Liouville version'' of \cite[Theorem 3.8]{CS19}. 

\begin{theorem}\label{thm:Sob_emb_BV}
If $u \in BV^{s}_{RL, a+}(I)$, then $u\in L^{\frac{1}{1-s},\infty}(I)$, and in particular $u\in L^r(I)$ for all $r\in \left [1,\frac{1}{1-s} \right)$.
\end{theorem}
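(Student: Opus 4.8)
The plan is to reduce the statement to two facts already available: the Fundamental Theorem of Calculus for $BV^{s}_{RL,a+}$ functions (Lemma \ref{lem:invinteder_BV}) and the weak continuity of the fractional integral on measures (Lemma \ref{lem:weak_meas}). First I would invoke Lemma \ref{lem:invinteder_BV}: since $u \in BV^{s}_{RL,a+}(I)$, for $\Leb{1}$-a.e.\ $x \in I$ we have the representation
\begin{equation*}
u(x) = I^{s}_{a+}[\mathcal{D}^{s}_{a+}[u]](x) + \frac{I^{1-s}_{a+}[u](a+)}{\Gamma(s)}(x-a)^{s-1}.
\end{equation*}
Thus it suffices to show that each of the two summands on the right-hand side lies in $L^{\frac{1}{1-s},\infty}(I)$, and then conclude by the quasi-triangle inequality for the weak-$L^{p}$ quasi-norm (i.e.\ $\{|f+g|>t\}\subseteq\{|f|>t/2\}\cup\{|g|>t/2\}$, exactly as used implicitly in the proof of Theorem \ref{result:RLfracsobemb}).

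For the first summand, I would recall that $\mathcal{D}^{s}_{a+}[u] = D I^{1-s}_{a+}[u] \in \mathcal{M}(I)$ by definition of $BV^{s}_{RL,a+}(I)$, so that Lemma \ref{lem:weak_meas} directly yields $I^{s}_{a+}[\mathcal{D}^{s}_{a+}[u]] \in L^{\frac{1}{1-s},\infty}(I)$, with the estimate
\begin{equation*}
\Leb{1}\left(\left\{x\in I: |I^{s}_{a+}[\mathcal{D}^{s}_{a+}[u]](x)|>t\right\}\right) \le C_{s}\left(\frac{|\mathcal{D}^{s}_{a+}[u]|(I)}{t}\right)^{\frac{1}{1-s}}.
\end{equation*}
For the second summand, I would observe (as already noted in the proof of Theorem \ref{result:RLfracsobemb}) that $(\cdot - a)^{s-1}\in L^{\frac{1}{1-s},\infty}(I)$: its distribution function at level $t$ equals $\min\{b-a,\, t^{-\frac{1}{1-s}}\}$, so $t^{\frac{1}{1-s}}$ times it is bounded by $1$.

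Combining the two, $u\in L^{\frac{1}{1-s},\infty}(I)$, and the inclusion $u\in L^{r}(I)$ for all $r\in\left[1,\frac{1}{1-s}\right)$ follows from the embedding $L^{\frac{1}{1-s},\infty}(I)\emb L^{r}(I)$ of Lemma \ref{lem:embed_L_p_weak}. I do not anticipate a genuine obstacle here: the argument is a clean corollary of the machinery built earlier, the only point requiring a moment's care being the handling of the quasi-norm when splitting the sum and the justification that $\mathcal{D}^{s}_{a+}[u]$ is a finite Radon measure, both of which are immediate from the definitions.
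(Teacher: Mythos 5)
Your proposal is correct and coincides with the paper's own proof: both use the representation formula of Lemma \ref{lem:invinteder_BV}, apply Lemma \ref{lem:weak_meas} to the measure term, observe that $(\cdot-a)^{s-1}\in L^{\frac{1}{1-s},\infty}(I)$, and conclude via Lemma \ref{lem:embed_L_p_weak}. Your explicit mention of the quasi-triangle inequality for the weak-$L^{p}$ quasi-norm only makes precise a step the paper leaves implicit.
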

\begin{proof}
By Lemma \ref{lem:invinteder_BV}, any $u \in BV^{s}_{RL, a+}(I)$ satisfies
\begin{equation*}
		u(x) =  I^s_{a+}[\mathcal{D}^s_{a+}[u]](x)+\frac{I^{1-s}_{a+}[u](a+)}{\Gamma(s)}(x-a)^{s-1} \ \text{ for } \Leb{1}\text{-a.e. } x \in I.
\end{equation*}
It is immediate to check that $(\cdot - a)^{s - 1} \in L^{\frac{1}{1 - s}, \infty}(I)$, while Lemma \ref{lem:weak_meas} implies $$I^s_{a+}[\mathcal{D}^s_{a+}[u]] \in L^{\frac{1}{1 - s}, \infty}(I),$$ since $\mathcal{D}^s_{a+}[u] \in \mathcal{M}(I)$. Then, it is enough to apply Lemma \ref{lem:embed_L_p_weak} to conclude the proof. 
\end{proof}

We notice here that the embedding in Theorem \ref{result:RLfracsobemb} is sharp. The continuity of the fractional integral $I^s_{a+}$ from $L^p(I)$ into $L^r(I)$, with $1<p<\frac{1}{s}$ and $1\le r \le\frac{p}{1-sp}$ has been proved by Hardy and Littlewood in \cite[Theorem 4]{MR1544927}, but in the limiting cases $p=1$ and $p=\frac{1}{s}$ the continuity fails, as shown by the following examples.

\begin{example}
	Let $s\in(0,1)$, $1<\beta\le 2-s$, $I=(0,1)$ and
\begin{equation*}
	f(x)=\begin{cases}\displaystyle
	\frac{1}{x|\log(x)|^\beta}\ & \text{if}\quad 0<x\le \frac{1}{2} \\
	0 \ & \text{if}\quad \frac{1}{2} <x<1.
	\end{cases}
\end{equation*}
	Let $u:=I^s_{0+}[f]$. Clearly, $u\in I^s_{0+}(L^1(I))=I^s_{0+}(L^1(I))\cap W^{s,1}_{RL,0+}(I)$ since $f\in L^1(I)$. However, for all $x \in (0, 1/2)$, we have
	$$
	u(x)=\frac{1}{\Gamma(s)}\int_0^x\frac{dt}{t|\log(t)|^\beta(x-t)^{1-s}}>\frac{x^{s-1}}{\Gamma(s)}\int_0^x\frac{dt}{t|\log(t)|^\beta}=\frac{1}{\Gamma(s)(\beta-1)}x^{s-1}|\log(x)|^{1-\beta},
	$$
	so that
	$$
	\int_0^1 |u(x)|^{\frac{1}{1-s}}dx \ge\int_0^{\frac{1}{2}} |u(x)|^{\frac{1}{1-s}}dx> c(\beta, s) \int_0^{1/2}\frac{dx}{x|\log(x)|^{\frac{\beta-1}{1-s}}}=+\infty,
	$$
	since $\displaystyle\frac{\beta-1}{1-s}\le 1$. Thus, $u\notin L^{\frac{1}{1-s}}(I)$.
\end{example}

\begin{example}
	Let $s\in(0,1)$, $I:=(0,1)$ and
\begin{equation*}
	f(x)=\begin{cases}
	0 & \text{ if } 0<x< \frac{1}{2}, \\
	\displaystyle \frac{1}{(1-x)^s|\log(1-x)|} & \text{ if } \frac{1}{2} \le x<1.
	\end{cases}
\end{equation*}
	Now let $u:=I^s_{0+}[f]$; since $f\in L^{1/s}(I)$, by Lemma \ref{lem:representcriteri} we have $u\in I^s_{0+}(L^{1/s}(I)) \emb W^{s,1/s}_{RL,0+}(I)\cap I^s_{0+}(L^1(I))$. Now, we notice that
	$$
	\lim_{x\to 1^-} u(x)=\frac{1}{\Gamma(s)}\int_{1/2}^1\frac{dt}{(1-t)|\log(1-t)|}=+\infty,
	$$
	which implies that $u\notin L^{\infty}(I)$.
\end{example}

\section{Higher order fractional derivatives} \label{sec:higher}

In this  section, we point out that some of the results presented in the paper can be extended to higher order fractional derivatives.
\begin{definition}
	\label{def:rlecaphigherorder}
	Let $k\in\N$, $s\in(k-1,k)$ and $u$ be such that the fractional integrals $I^{k-s}_{a+}[u]$ and $I^{k-s}_{b-}[u]$ are sufficiently smooth. We define the Riemann-Liouville $s$-fractional derivatives of $u$ as
	$$
	D^s_{a+}[u](x):=\frac{d^k}{dx^k}I_{a+}^{k-s}[u](x).
	$$
	$$
	D^s_{b-}[u](x):=(-1)^k\frac{d^k}{dx^k}I_{b-}^{k-s}[u](x).
	$$
\end{definition}
We observe that $I^{k-s}_{a+}$ and $I^{k - s}_{b-}$ are the left and right fractional integral operators, respectively, as defined in \eqref{eq:frac_int_1} and \eqref{eq:frac_int_2}, since $k - s \in (0, 1)$ for all $k \in \N$ and $s \in (k - 1, k)$.
From this definition, for $u\in C^k(\overline{I})$, we immediately obtain a definition for higher order Caputo fractional derivatives:
$$
{}^CD^s_{a+}[u](x) := \frac{1}{\Gamma(k-s)}\int_a^x\frac{u^{(k)}(t)}{(x-t)^{s-k+1}}dt = D^s_{a+}[u](x)-\sum_{j=0}^{k-1}\frac{u^{(j)}(a)}{\Gamma(j-s+1)}(x-a)^{j-s},
$$
and
$$
{}^CD^s_{b-}[u](x) := \frac{(-1)^k}{\Gamma(k-s)}\int_x^b\frac{u^{(k)}(t)}{(t-x)^{s-k+1}}dt = D^s_{b-}[u](x)-\sum_{j=0}^{k-1}(-1)^j\frac{u^{(j)}(b)}{\Gamma(j-s+1)}(b-x)^{j-s}.
$$
These higher order fractional derivatives allow to define, for $p\ge 1$, $k\in\N$ and $s\in(k-1,k)$, {\em higher order left Riemann-Liouville fractional Sobolev spaces}
\begin{equation*}
W^{s,p}_{RL,a+}(I) := \left \{ u\in W^{k-1,p}(I) : I^{k-s}_{a+}[u]\in W^{k,p}(I) \right \}.
\end{equation*}

\begin{proposition}[Continuity of the fractional integral in higher order Sobolev spaces] \label{prop:continuity_higher_order}
If $k\ge 2$, $1\le p<\infty $ and $s\in \left (k-1,k-1+\frac{1}{p} \right )$, then the fractional integral $I^{k-s}_{a+}$ is a continuous operator from $W^{k,p}(I)\cap W_a^{k-1,p}(I)$ into $W^{k,p}(I)$. Moreover, for $k\ge 1$ and for all $s\in(k-1,k)$, $I^{k-s}_{a+}$ is a continuous operator
\begin{enumerate}
		\item from $W_a^{k,p}(I)$ into $W^{k,p}(I)$ for all $p \in [1, \infty]$,
		\item from $W_a^{k,1}(I)$ into $W^{k,\frac{1}{1-k+s},\infty}(I)$, and so into $W^{k,r}(I)$, for all $r\in \left [1,\frac{1}{1-k+s} \right )$,
		\item from $W_a^{k,p}(I)$ into $W^{k,r}(I)$ for all $p\in \left (1,\frac{1}{k-s} \right )$ and $r\in \left [1,\frac{p}{1-(k-s)p} \right ]$,
		\item from $W_a^{k,p}(I)$ into $C^{k,k-s-\frac{1}{p}}(\overline{I})$ for all $p \in \left (\frac{1}{k-s}, \infty \right )$,
		\item from $W_a^{k,\frac{1}{k-s}}(I)$ into $W^{k,r}(I)$ for all $r\in[1,\infty)$,
		\item from $W_a^{k,\infty}(I)$ into $C^{k,k-s}(\overline{I})$,
	\end{enumerate}
where $W^{k,\frac{1}{1-k+s},\infty}(I):=\left\{u\in W^{k, 1}(I) \cap W^{k-1,\infty}(I) : u^{(k)}\in L^{\frac{1}{1-k+s},\infty}(I)\right\}$.
	\end{proposition}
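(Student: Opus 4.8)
The plan is to reduce everything to the first-order commutation formula \eqref{eq:comm_D_s_I_Sobolev} already established in the proof of Proposition \ref{prop:regsobolev}, by iterating it. Set $\sigma := k - s \in (0,1)$, so that $I^{k-s}_{a+} = I^{\sigma}_{a+}$ is the ordinary Riemann--Liouville fractional integral of order $\sigma$, to which Proposition \ref{prop:cont_BLNT} and Remark \ref{rem:constant_Riesz_Thorin} apply directly.

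First I would prove, by induction on $m \ge 1$, the following commutation lemma: if $v \in W^{m,p}(I)$ satisfies $v^{(i)}(a) = 0$ for all $i \in \{0,\dots,m-1\}$, then $I^{\sigma}_{a+}[v] \in W^{m,p}(I)$ and $\frac{d^{j}}{dx^{j}} I^{\sigma}_{a+}[v] = I^{\sigma}_{a+}[v^{(j)}]$ for every $j \in \{0,\dots,m\}$. The base case $m=1$ is exactly \eqref{eq:comm_D_s_I_Sobolev} (with $1-s$ there replaced by $\sigma$), since $v(a)=0$ annihilates the boundary term; the inductive step applies the base case to $v'$, which lies in $W^{m-1,p}(I)$ and vanishes together with its first $m-2$ derivatives at $a$. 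Each intermediate term $I^{\sigma}_{a+}[v^{(j)}]$ lies in $L^{p}(I)$ by Proposition \ref{prop:cont_BLNT}(1) and Remark \ref{rem:constant_Riesz_Thorin}, so the chain of identities indeed exhibits the distributional derivatives of $I^{\sigma}_{a+}[v]$ up to order $m$.

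With this in hand, cases (1)--(6) are immediate. For $u \in W^{k,p}_{a}(I)$ the lemma with $m=k$ and $v=u$ gives $D^{s}_{a+}[u] = \frac{d^{k}}{dx^{k}} I^{\sigma}_{a+}[u] = I^{\sigma}_{a+}[u^{(k)}]$ with $u^{(k)} \in L^{p}(I)$, while for $j<k$ one has $u^{(j)} \in W^{k-j,p}(I) \subseteq W^{1,p}(I) \subseteq L^{\infty}(I)$, hence $I^{\sigma}_{a+}[u^{(j)}]$ is bounded (indeed H\"older continuous) by Proposition \ref{prop:cont_BLNT}(6). Thus the regularity of $I^{\sigma}_{a+}[u]$ is governed entirely by that of $I^{\sigma}_{a+}[u^{(k)}]$, and the precise target space in each of (1)--(6) follows by applying the corresponding point of Proposition \ref{prop:cont_BLNT} to $u^{(k)}$ (for (2), combined with Lemma \ref{lem:weakpq} and Lemma \ref{lem:embed_L_p_weak}), the continuity estimates coming from the explicit constants there and in Remark \ref{rem:constant_Riesz_Thorin}.

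For the first assertion, where $u \in W^{k,p}(I) \cap W^{k-1,p}_{a}(I)$ vanishes only to order $k-1$, I would apply the lemma with $m=k-1$ and $v=u$ to get $\frac{d^{k-1}}{dx^{k-1}} I^{\sigma}_{a+}[u] = I^{\sigma}_{a+}[u^{(k-1)}]$, and then differentiate once more using the base-case formula \eqref{eq:comm_D_s_I_Sobolev} applied to $u^{(k-1)} \in W^{1,p}(I)$:
\begin{equation*}
\frac{d^{k}}{dx^{k}} I^{\sigma}_{a+}[u](x) = \frac{u^{(k-1)}(a)}{\Gamma(\sigma)} (x-a)^{\sigma-1} + I^{\sigma}_{a+}[u^{(k)}](x).
\end{equation*}
Here $I^{\sigma}_{a+}[u^{(k)}] \in L^{p}(I)$, and $(\cdot - a)^{\sigma-1} \in L^{p}(I)$ precisely because $(1-\sigma)p < 1$, i.e. $s < k-1+\tfrac{1}{p}$, which is the hypothesis; the lower-order derivatives are in $L^{p}(I)$ as above, so $I^{\sigma}_{a+}[u] \in W^{k,p}(I)$ with a norm bound depending only on $s,p,a,b$. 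The main obstacle, and the only point demanding genuine care, is the induction establishing the commutation formula together with the bookkeeping of the boundary term $u^{(k-1)}(a)$ in the last step and the sharp integrability threshold $(1-\sigma)p<1$; everything else is a routine appeal to the already-proven mapping properties of $I^{\sigma}_{a+}$.
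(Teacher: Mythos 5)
Your proof is correct and follows essentially the same strategy as the paper: commute the $k$-th derivative past $I^{\sigma}_{a+}$ (with $\sigma = k-s$), isolate the boundary term $u^{(k-1)}(a)(x-a)^{\sigma-1}$ whose $L^{p}$-integrability yields the threshold $s < k-1+\frac{1}{p}$, and apply Proposition \ref{prop:cont_BLNT} to $I^{\sigma}_{a+}[u^{(k)}]$ to read off the six target spaces. The only real difference is in how the commutation is established: the paper writes down an explicit $k$-fold integration-by-parts representation of $I^{k-s}_{a+}[u]$ with constants $c_{s,k,h}$ and differentiates it term by term, whereas you obtain the same identity by induction on the first-order formula \eqref{eq:comm_D_s_I_Sobolev}, which is an equally valid and arguably cleaner piece of bookkeeping.
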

	\begin{proof}
We recall that $W^{k,p}(I)\emb AC^{k - 1}(\overline{I})$, for all $1\le p\le\infty$ and $k\in\N$, where $u \in AC^{k - 1}(\overline{I})$ if $u \in C^{k - 1}(\overline{I})$ and $u^{(k - 1)} \in AC(\overline{I})$. Hence, we see that the following representation formula obtained via iterated integrations by parts holds:
		\begin{equation*}
		I^{k-s}_{a+}[u](x)=\frac{1}{\Gamma(k-s)}\left(c_{s,k,k}\int_a^x u^{(k)}(t)(x-t)^{2k-s-1}dt+\sum_{i=0}^{k-1}c_{s,k,i}u^{(i)}(a)(x-a)^{k-s+i}\right),
		\end{equation*}
		where
\begin{equation*}
		c_{s,k,h}:=\displaystyle\begin{cases}
		1 & \text{ if }\quad h=0, \\
		\left(\prod_{l=0}^{h-1}(k-s+l)\right)^{-1} & \text{ if }\quad h\ge 1.
		\end{cases}
\end{equation*}
		Therefore, it is not difficult to check that $I^{k-s}_{a+}[u]$ admits weak derivatives in $L^p(I)$ up to order $k$ if $u$ vanishes in $a$ with all its derivatives up to order $k-2$. Indeed, if $u \in W^{k,p}(I)\cap W_a^{k-1,p}(I)$, for all $j \in \{0, \dots, k\}$ we have
	\begin{equation} \label{eq:iterintegr}
		(I^{k-s}_{a+}[u])^{(j)}(x)= \frac{d_{j, k, s}}{\Gamma(k-s)} \int_a^x u^{(k)}(t)(x-t)^{2k-s-1-j}dt + \frac{e_{j, k, s}}{\Gamma(k-s)} u^{(k - 1)}(a)(x-a)^{2k-s-1-j},
	\end{equation}	
	where $$d_{j, k, s} = c_{s,k,k} \frac{\Pi_{i = 0}^{j} (2 k - s - i)}{2k - s} \text{ and } e_{j, k, s} = c_{s,k,k - 1}  \frac{\Pi_{i = 0}^{j} (2 k - s - i)}{2k - s}.$$
	It is clear that the second term belongs to $L^{p}(I)$ for all $j \in \{0, \dots, k\}$ if and only if $s < k - 1 + \frac{1}{p}$. As for the first term, we notice that
	\begin{align*}
	 \frac{1}{\Gamma(k-s)} \left |\int_a^x u^{(k)}(t)(x-t)^{2k-s-1-j}dt\right| & \le \frac{1}{\Gamma(k-s)} (b -a)^{k - j} \int_a^x \frac{|u^{(k)}(t)|}{(x-t)^{1 - k + s}}dt \\
	 & = (b - a)^{k - j} I^{k - s}_{a+}[|u^{(k)}|](x),
	\end{align*}
	so that Proposition \ref{prop:cont_BLNT} and Remark \ref{rem:constant_Riesz_Thorin} imply
	\begin{align*}
 \frac{1}{\Gamma(k-s)} \left ( \int_{a}^{b} \left |\int_a^x u^{(k)}(t)(x-t)^{2k-s-1-j}dt\right|^{p} dx \right )^{\frac{1}{p}} & \le (b - a)^{k - j} \left \|I^{k - s}_{a+}[|u^{(k)}|]\right \|_{L^{p}(I)} \\
 & \le  (b - a)^{k - j} \frac{(b-a)^{k - s}}{\Gamma(k + 1 -s)} \|u^{(k)}\|_{L^{p}(I)}.
	\end{align*}
Therefore, $(I^{k - s}_{a+}[u])^{(j)} \in L^p(I)$ for all $j \in \{0, \dots, k\}$, $1\le p<\infty $ and $s\in \left (k-1,k-1+\frac{1}{p} \right )$. Thus, for all $u \in W^{k,p}(I)\cap W_a^{k-1,p}(I)$ we get
\begin{equation*}
\|I^{k-s}_{a+}[u]\|_{W^{k, p}(I)} \le C_{k, s, p, a,b} \left (  \|u^{(k)}\|_{L^{p}(I)} + |u^{(k-1)}(a)|\right ) \le \tilde{C} \|u\|_{W^{k,p}(I)},
\end{equation*}
thanks to Remark \ref{rem:Sobolev_AC_emb}, since $u^{(k-1)} \in W^{1, p}(I)$.
Furthermore, if $k \ge 1$ and $u\in W^{k,p}_a(I)$, we have $u^{(k-1)}(a)=0$, so that \eqref{eq:iterintegr} reduces to
\begin{equation}
\label{eq:reprformwkpa}
(I^{k-s}_{a+}[u])^{(j)}(x)= \frac{d_{j, k, s}}{\Gamma(k-s)} \int_a^x u^{(k)}(t)(x-t)^{2k-s-1-j}dt,
\end{equation}
for all $j \in \{0, \dots, k\}$.
In particular, if $j \in \{0, \dots, k-1\}$, then $(x-t)^{2k - s - 1 - j}$ is bounded for all $x, t \in (a, b)$ and $s \in (k-1,k)$; so that $I^{k-s}_{a+}[u] \in C^{k - 1}(\overline{I})$, with 
\begin{equation*}
\|I^{k-s}_{a+}[u]\|_{W^{k-1, \infty}(I)} \le C \|u^{(k)}\|_{L^{1}(I)} \le \tilde{C} \|u\|_{W^{k, p}(I)}
\end{equation*} 
for all $p \in [1, \infty]$ and $u \in W^{k,p}_a(I)$. 
On the other hand, if $j = k$, \eqref{eq:reprformwkpa} yields $$(I^{k-s}_{a+}[u])^{(k)} = d_{k,k,s} I^{k - s}_{a+}[u^{(k)}],$$ so that we can apply Proposition \ref{prop:cont_BLNT} to $u^{(k)}$ replacing $s$ with $k-s\in(0,1)$ in order to conclude the continuity of $I^{k-s}_{a+}$ from $W^{k,p}_{a}(I)$ into suitable Sobolev or H\"older spaces, depending on the values of $k, s, p$.
	\end{proof}

\begin{remark}
The first part of Proposition \ref{prop:continuity_higher_order} in the case $k=1$ is covered by Proposition \ref{prop:regsobolev}, where a homogeneous initial condition is not necessary to prove the continuity of the $(1-s)$-fractional integral from $W^{1,p}(I)$ to $W^{1,p}(I)$.
\end{remark}

\begin{remark}
Using the same counterexample of Remark \ref{rem:casopinfinito} in the case $k=1$, we see that, in the case $p=\infty$, homogeneous conditions in the initial point for all the derivatives up to order $k-1$ are necessary in order to show that $I^{k-s}_{a+}[u] \in W^{k,\infty}(I)$.
\end{remark}

The introduction of higher order Riemann-Liouville fractional Sobolev spaces allows us to prove the following proposition involving the space $$BH(I):=\left\{u\in W^{1,1}(I)\,|\,u'\in BV(I)\right\},$$ which is known in the literature as the space of {\em functions with bounded Hessian} in $I$. Originally introduced in \cite{MR746501}, $BH$ is the natural setting for second order variational problems with linear growth (see {\it e.g.} \cite{MR2118417} for applications in image analysis). For our purposes, we consider the subspace $BH_{a}(I)$; that is, the space of functions $u \in BH(I)$ such that $u(a) = 0$, which is well defined, since $u \in AC(\overline{I})$, being a Sobolev function.

\begin{proposition}
	Let $u\in BH_{a}(I)$, then $u\in W^{s,1}_{RL,a+}(I)$ for all $s\in (1,2)$.
	\end{proposition}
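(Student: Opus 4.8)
The plan is to reduce the statement to two facts established earlier: the action of the fractional integral on $W^{1,1}_{a}(I)$ (Proposition \ref{prop:regsobolev}) and the embedding $BV(I)\hookrightarrow W^{r,1}_{RL,a+}(I)$ for every $r\in(0,1)$ (Theorem \ref{result:BV_W_s1_RL}). Set $\sigma:=2-s$, so that $\sigma\in(0,1)$ and $1-\sigma=s-1\in(0,1)$. By Definition \ref{def:rlecaphigherorder} and the definition of the higher order space $W^{s,1}_{RL,a+}(I)$ with $k=2$, proving the claim amounts to showing that $u\in W^{1,1}(I)$ and $I^{\sigma}_{a+}[u]\in W^{2,1}(I)$; the first membership is immediate, since $BH_{a}(I)\subset W^{1,1}(I)$ by definition.

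First I would note that $u\in BH_{a}(I)$ gives in particular $u\in W^{1,1}_{a}(I)$, so Proposition \ref{prop:regsobolev} applied with $p=1$ and fractional parameter $1-\sigma$ yields $I^{\sigma}_{a+}[u]\in W^{1,1}(I)$; moreover, since $u(a)=0$, the commutation identity \eqref{eq:comm_D_s_I_Sobolev} collapses to
\[
\frac{d}{dx}\,I^{\sigma}_{a+}[u]=D^{1-\sigma}_{a+}[u]=I^{\sigma}_{a+}[u'] \quad\text{in }L^{1}(I).
\]

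Next I would invoke the hypothesis $u'\in BV(I)$. Applying Theorem \ref{result:BV_W_s1_RL} to $u'$ with fractional order $1-\sigma\in(0,1)$ gives $u'\in W^{1-\sigma,1}_{RL,a+}(I)$, that is, $I^{1-(1-\sigma)}_{a+}[u']=I^{\sigma}_{a+}[u']\in W^{1,1}(I)$, its weak derivative being, by \eqref{eq:repr_form_BV},
\[
\frac{d}{dx}\,I^{\sigma}_{a+}[u'](x)=I^{\sigma}_{a+}[Du'](x)+\frac{1}{\Gamma(\sigma)}\frac{u'(a+)}{(x-a)^{1-\sigma}},
\]
which belongs to $L^{1}(I)$ by Proposition \ref{result:measure_fract_int} for the first summand and because $1-\sigma<1$ for the second.

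Putting the two steps together, $I^{\sigma}_{a+}[u]$ has first weak derivative $I^{\sigma}_{a+}[u']\in W^{1,1}(I)$, hence it admits a weak second derivative in $L^{1}(I)$; since also $I^{\sigma}_{a+}[u]\in L^{1}(I)$, we conclude $I^{\sigma}_{a+}[u]\in W^{2,1}(I)$, which is precisely what is needed. I do not expect a genuine obstacle in this argument: the only delicate point is the bookkeeping of the various fractional orders — in particular that the instance of Theorem \ref{result:BV_W_s1_RL} we need is the one of order $1-\sigma=s-1$, applied to $u'$ rather than to $u$ — together with the elementary observation that the boundary term $u'(a+)\,(x-a)^{-(1-\sigma)}$ is integrable on $I$ because $1-\sigma\in(0,1)$.
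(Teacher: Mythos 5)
Your proposal is correct and follows essentially the same route as the paper: both reduce the claim to showing that, since $u(a)=0$, the first derivative of $I^{2-s}_{a+}[u]$ equals $I^{2-s}_{a+}[u']$ (the Caputo/Riemann--Liouville commutation identity), and then apply Theorem \ref{result:BV_W_s1_RL} to $u'\in BV(I)$ with fractional order $s-1$ to conclude that this derivative lies in $W^{1,1}(I)$. The only difference is cosmetic bookkeeping ($\sigma=2-s$ versus the paper's $\sigma=s-1$) and your slightly more explicit citation of Proposition \ref{prop:regsobolev} for the first step.
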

	\begin{proof}
		By definition, $u\in W^{1,1}(I)$ and $u'\in BV(I)$. Therefore, thanks to Theorem \ref{result:BV_W_s1_RL}, we have $u'\in W^{\sigma,1}_{RL,a+}(I)$ for all $\sigma\in(0,1)$, and so $I^{1-\sigma}_{a+}[u']={}^CD^{\sigma}_{a+}[u]\in W^{1,1}(I)$. Now, since $u(a)=0$, we have ${}^C D^{\sigma}_{a+}[u](x)=D^{\sigma}_{a+}[u](x)$ for all $x\in I$ by \eqref{eq:RLecap}, since $u \in W^{1, 1}(I)$ implies the existence of a representative of $u$ in $AC(\overline{I})$. This implies that $I^{1-\sigma}_{a+}[u]\in W^{2,1}(I)$ for all $\sigma\in (0,1)$. Now, if we set $\sigma:=s-1$ for $s\in(1,2)$, the claim plainly follows.
	\end{proof}

\section{Open Problems} \label{sec:open}

As noticed in Remark \ref{rem:mironsic}, we are not able to prove (or disprove) that for $s\in(0,1)$ and $p>1$ the inclusion
\begin{equation}
\label{eq:conjectureinclusion}
W^{s,p}(I)\emb W^{s,p}_{RL,a+}(I)
\end{equation}
holds. 

In addition, Proposition \ref{prop:newprop} does not cover the case $r\in \left [s,s+\frac{1}{p'} \right )$. In any case, we think that, in order to prove the inclusion in \eqref{eq:conjectureinclusion}, the condition $sp<1$ is essential.
	
	 Indeed, if $sp<1$, thanks to Remark \ref{rem:densitàdellecc1}, the set $C^1_c(I)$ is dense in $W^{s,p}(I)$. Therefore, firstly one should be able to prove an analogous of Lemma \ref{lem:density} for functions in $W^{s,p}(I)$, and once proved that
\begin{equation*}
D^s_{a+}[u](x)=\frac{1}{\Gamma(1-s)}\frac{u(x)}{(x-a)^s}+\frac{s}{\Gamma(1-s)}\int_a^x\frac{u(x)-u(t)}{(x-t)^{s+1}}dt \  \text{ for } \Leb{1}\text{-a.e. } x \in I,
\end{equation*}
one could estimate the $L^p$ norm of the first term in the right-hand side thanks to the fractional Hardy inequality (Lemma \ref{lem:hardy}). However, it is not yet clear to us how to handle the second term. Indeed, thanks to H\"older's inequality, a slightly rough estimate yields
\begin{equation*}
\int_a^b \left|\int_a^x\frac{u(x)-u(t)}{(x-t)^{s+1}} \, dt\right|^p dx \le (b - a)^{p - 1} \int_a^b \int_a^x\frac{|u(x)-u(t)|^p}{|x-t|^{sp+p}}\, dt \, dx = (b - a)^{p - 1} \left (S_1+S_2 \right ),
\end{equation*}
where
\begin{equation*}
S_1:=\int_a^b \int_{I_1(x)}\frac{|u(x)-u(t)|^p}{|x-t|^{sp+p}} \, dt \, dx, \ I_{1}(x) := \{ t \in (a, b) : |x - t| > 1 \},
\end{equation*}
and
\begin{equation*}
S_2:=\int_a^b \int_{I_2(x)}\frac{|u(x)-u(t)|^p}{|x-t|^{sp+p}} \, dt \, dx, \ I_{2}(x) := \{ t\in(a,b) : |x-t|\le 1\}.
\end{equation*}
As for $S_1$, we see that
\begin{equation*}
S_1\le \int_a^b \int_{I_{1}(x)}\frac{|u(x)-u(t)|^p}{|x-t|^{sp+1}} \, dt \, dx \le [u]^p_{W^{s,p}(I)}.
\end{equation*}
However, we are not able to prove (or disprove) the existence of a constant $C = C(s, p, I) > 0$ such that an estimate of the form
\begin{equation*}
S_2\leq C[u]^p_{W^{s,p}(I)},
\end{equation*}
or its weaker formulation
\begin{equation*}
S_2\leq C\left\|u\right\|^p_{W^{s,p}(I)},
\end{equation*}
holds true.


\begin{bibdiv}
\begin{biblist}
	
		\bib{MR2424078}{book}{
		author={Adams, Robert A.},
		author={Fournier, John J. F.},
		title={Sobolev spaces},
		series={Pure and Applied Mathematics (Amsterdam)},
		volume={140},
		edition={2},
		publisher={Elsevier/Academic Press, Amsterdam},
		date={2003},
		pages={xiv+305},
	}

\bib{MR3488533}{article}{
   author={Allen, Mark},
   author={Caffarelli, Luis},
   author={Vasseur, Alexis},
   title={A parabolic problem with a fractional time derivative},
   journal={Arch. Ration. Mech. Anal.},
   volume={221},
   date={2016},
   number={2},
   pages={603--630},
   doi={10.1007/s00205-016-0969-z},
}

\bib{MR3557159}{article}{
   author={Almeida, Ricardo},
   author={Bastos, Nuno R. O.},
   author={Monteiro, M. Teresa T.},
   title={Modeling some real phenomena by fractional differential equations},
   journal={Math. Methods Appl. Sci.},
   volume={39},
   date={2016},
   number={16},
   pages={4846--4855},
   doi={10.1002/mma.3818},
}

\bib{AFP}{book}{
   author={Ambrosio, Luigi},
   author={Fusco, Nicola},
   author={Pallara, Diego},
   title={Functions of bounded variation and free discontinuity problems},
   series={Oxford Mathematical Monographs},
   publisher={The Clarendon Press, Oxford University Press, New York},
   date={2000},
   pages={xviii+434},
}

\bib{MR2513750}{book}{
   author={Anastassiou, George A.},
   title={Fractional differentiation inequalities},
   publisher={Springer, Dordrecht},
   date={2009},
   pages={xiv+675},
   doi={10.1007/978-0-387-98128-4},
}

\bib{A64}{book}{
   author={Artin, Emil},
   title={The Gamma function},
   series={Translated by Michael Butler. Athena Series: Selected Topics in
   Mathematics},
   publisher={Holt, Rinehart and Winston, New York-Toronto-London},
   date={1964},
   pages={vii+39},
}

\bib{BLNT}{article}{
   author={Bergounioux, Ma\"{i}tine},
   author={Leaci, Antonio},
   author={Nardi, Giacomo},
   author={Tomarelli, Franco},
   title={Fractional Sobolev spaces and functions of bounded variation of
   one variable},
   journal={Fract. Calc. Appl. Anal.},
   volume={20},
   date={2017},
   number={4},
   pages={936--962},
   doi={10.1515/fca-2017-0049},
}

\bib{MR3311433}{article}{
   author={Bourdin, Lo\"{\i}c},
   author={Idczak, Dariusz},
   title={A fractional fundamental lemma and a fractional integration by
   parts formula---Applications to critical points of Bolza functionals and
   to linear boundary value problems},
   journal={Adv. Differential Equations},
   volume={20},
   date={2015},
   number={3-4},
   pages={213--232},
}

\bib{MR2379269}{article}{
   author={Caputo, Michele},
   title={Linear models of dissipation whose $Q$ is almost frequency
   independent. II},
   note={Reprinted from Geophys. J. R. Astr. Soc. {\bf 13} (1967), no. 5,
   529--539},
   journal={Fract. Calc. Appl. Anal.},
   volume={11},
   date={2008},
   number={1},
   pages={4--14},
}

\bib{CDV18}{article}{
   author={Carbotti, Alessandro},
   author={Dipierro, Serena},
   author={Valdinoci, Enrico},
   title={Local density of Caputo-stationary functions of any order},
    journal = {Complex Variables and Elliptic Equations},
     date = {2018},
   doi = {10.1080/17476933.2018.1544631}
}

	\bib{CDV19}{book}{	
	author={Carbotti, Alessandro},
	author={Dipierro, Serena},
	author={Valdinoci, Enrico},
	title={Local density of solutions to fractional equations},
	series = {De Gruyter Studies in Mathematics},
	publisher = {De Gruyter}
	date = {2019}
}

\bib{MR2118417}{article}{
	author={Carriero, Michele},
	author={Leaci, Antonio},
	author={Tomarelli, Franco},
	title={Second order variational problems with free discontinuity and free
		gradient discontinuity},
	conference={
		title={Calculus of variations: topics from the mathematical heritage
			of E. De Giorgi},
	},
	book={
		series={Quad. Mat.},
		volume={14},
		publisher={Dept. Math., Seconda Univ. Napoli, Caserta},
	},
	date={2004},
	pages={135--186},
}

\bib{CS18}{article}{
   author={Comi, Giovanni E.},
   author={Stefani, Giorgio},
   title={A distributional approach to fractional Sobolev spaces and fractional variation: existence of blow-up},
   journal={J. Funct. Anal.},
   volume={277},
   date={2019},
   number={10},
   pages={3373--3435},
   doi={10.1016/j.jfa.2019.03.011},
}

\bib{CS19}{article}{
   author={Comi, Giovanni E.},
   author={Stefani, Giorgio},
   title={A distributional approach to fractional Sobolev spaces and fractional variation: asymptotics I},
   date={2019},
   eprint={https://arxiv.org/abs/1910.13419},
   status={preprint}
}

\bib{MR746501}{article}{
	author={Demengel, Fran\c{c}oise},
	title={Fonctions \`a hessien born\'{e}},
	language={French},
	journal={Ann. Inst. Fourier (Grenoble)},
	volume={34},
	date={1984},
	number={2},
	pages={155--190},
}

\bib{MR2944369}{article}{
   author={Di Nezza, Eleonora},
   author={Palatucci, Giampiero},
   author={Valdinoci, Enrico},
   title={Hitchhiker's guide to the fractional Sobolev spaces},
   journal={Bull. Sci. Math.},
   volume={136},
   date={2012},
   number={5},
   pages={521--573},
   doi={10.1016/j.bulsci.2011.12.004},
}

\bib{MR3089369}{article}{
   author={Di Paola, Mario},
   author={Pinnola, Francesco Paolo},
   author={Zingales, Massimiliano},
   title={Fractional differential equations and related exact mechanical
   models},
   journal={Comput. Math. Appl.},
   volume={66},
   date={2013},
   number={5},
   pages={608--620},
   doi={10.1016/j.camwa.2013.03.012},
}

\bib{DV1}{article}{
   author={Dipierro, Serena},
   author={Valdinoci, Enrico},
   title={A Simple Mathematical Model Inspired by the Purkinje Cells: From
   Delayed Travelling Waves to Fractional Diffusion},
   journal={Bull. Math. Biol.},
   volume={80},
   date={2018},
   number={7},
   pages={1849--1870},
   doi={10.1007/s11538-018-0437-z},
}

\bib{MR2085428}{article}{
   author={Dyda, Bart\l omiej},
   title={A fractional order Hardy inequality},
   journal={Illinois J. Math.},
   volume={48},
   date={2004},
   number={2},
   pages={575--588},
}

\bib{Evans_Gariepy}{book}{
   author={Evans, Lawrence C.},
   author={Gariepy, Ronald F.},
   title={Measure theory and fine properties of functions},
   series={Textbooks in Mathematics},
   edition={Revised edition},
   publisher={CRC Press, Boca Raton, FL},
   date={2015},
   pages={xiv+299},
}

\bib{ferrari}{article}{
   author={Ferrari, Fausto},
TITLE = {Weyl and Marchaud Derivatives: A Forgotten History},
JOURNAL = {Mathematics},
VOLUME = {6},
YEAR = {2018},
NUMBER = {1},
DOI = {10.3390/math6010006},
}

\bib{G14-C}{book}{
   author={Grafakos, Loukas},
   title={Classical Fourier Analysis},
   series={Graduate Texts in Mathematics},
   volume={249},
   edition={3},
   publisher={Springer, New York},
   date={2014},
}

\bib{G14}{book}{
   author={Grafakos, Loukas},
   title={Modern Fourier Analysis},
   series={Graduate Texts in Mathematics},
   volume={250},
   edition={3},
   publisher={Springer, New York},
   date={2014},
}

\bib{MR1544927}{article}{
	author={Hardy, G. H.},
	author={Littlewood, J. E.},
	title={Some properties of fractional integrals. I},
	journal={Math. Z.},
	volume={27},
	date={1928},
	number={1},
	pages={565--606},
	doi={10.1007/BF01171116},
}

\bib{MR3144452}{article}{
	author={Idczak, Dariusz},
	author={Walczak, Stanis\l aw},
	title={Fractional Sobolev spaces via Riemann-Liouville derivatives},
	journal={J. Funct. Spaces Appl.},
	date={2013},
	pages={Art. ID 128043, 15},
	doi={10.1155/2013/128043},
}

\bib{leibniz1849letter}{article}{
  title={Letter from {H}anover, {G}ermany, to {G}. {F}. {A}. {L}’{H}${\hat{o}}$pital, {S}eptember 30; 1695},
  author={Leibniz, Gottfried Wilhelm},
  journal={Mathematische Schriften},
  volume={2},
  pages={301--302},
  year={1849}
}

\bib{TesiLuca}{thesis}{
	author={Lombardini, Luca},
title={Minimization problems involving nonlocal functionals: nonlocal minimal surfaces and a free boundary problem},
date={2018},
note={PhD thesis, available at https://arxiv.org/pdf/1811.09746.pdf},
}

\bib{MR3753604}{book}{
	author={Lunardi, Alessandra},
	title={Interpolation theory},
	series={Appunti. Scuola Normale Superiore di Pisa (Nuova Serie) [Lecture
		Notes. Scuola Normale Superiore di Pisa (New Series)]},
	volume={16},
	publisher={Edizioni della Normale, Pisa},
	date={2018},
	pages={xiv+199},
	doi={10.1007/978-88-7642-638-4},
}

\bib{MR3357858}{article}{
	author={Mironescu, Petru},
	author={Sickel, Winfried},
	title={A Sobolev non embedding},
	journal={Atti Accad. Naz. Lincei Rend. Lincei Mat. Appl.},
	volume={26},
	date={2015},
	number={3},
	pages={291--298},
	doi={10.4171/RLM/707},
}

\bib{MR1347689}{book}{
   author={Samko, Stefan G.},
   author={Kilbas, Anatoly A.},
   author={Marichev, Oleg I.},
   title={Fractional integrals and derivatives},
   note={Theory and applications;
   Edited and with a foreword by S. M. Nikol\cprime ski\u\i ;
   Translated from the 1987 Russian original;
   Revised by the authors},
   publisher={Gordon and Breach Science Publishers, Yverdon},
   date={1993},
   pages={xxxvi+976},
}


\bib{SSS15}{article}{
   author={Schikorra, Armin},
   author={Shieh, Tien-Tsan},
   author={Spector, Daniel},
   title={$L^p$ theory for fractional gradient PDE with $VMO$ coefficients},
   journal={Atti Accad. Naz. Lincei Rend. Lincei Mat. Appl.},
   volume={26},
   date={2015},
   number={4},
   pages={433--443},
}
%

\bib{SS15}{article}{
   author={Shieh, Tien-Tsan},
   author={Spector, Daniel E.},
   title={On a new class of fractional partial differential equations},
   journal={Adv. Calc. Var.},
   volume={8},
   date={2015},
   number={4},
   pages={321--336},
}

\bib{SS18}{article}{
   author={Shieh, Tien-Tsan},
   author={Spector, Daniel E.},
   title={On a new class of fractional partial differential equations II},
   journal={Adv. Calc. Var.},
   volume={11},
   date={2018},
   number={3},
   pages={289--307},
}

\bib{S19}{article}{
   author={\v{S}ilhav\'y, Miroslav},
   title={Fractional vector analysis based on invariance requirements (Critique of coordinate approaches)},
   date={2019},
   journal={M. Continuum Mech. Thermodyn.},
   pages={1--22},
}

%

\bib{MR0290095}{book}{
   author={Stein, Elias M.},
   title={Singular integrals and differentiability properties of functions},
   series={Princeton Mathematical Series, No. 30},
   publisher={Princeton University Press, Princeton, N.J.},
   date={1970},
   pages={xiv+290},
}



\end{biblist}
\end{bibdiv}

\end{document}